\numberwithin{equation}{section}
\let\originalleft\left
\let\originalright\right
\renewcommand{\left}{\mathopen{}\mathclose\bgroup\originalleft}
\renewcommand{\right}{\aftergroup\egroup\originalright}
\newlength{\bibitemsep}
\newlength{\bibparskip}\setlength{\bibparskip}{0pt}
\let\oldthebibliography\thebibliography
\renewcommand\thebibliography[1]{\oldthebibliography{#1}
  \setlength{\parskip}{\bibitemsep}
  \setlength{\itemsep}{\bibparskip}}
\DeclareMathOperator{\Tr}{Tr}
\DeclareMathOperator{\OO}{O}
\DeclareMathOperator{\oo}{o}
\DeclareMathOperator{\Id}{Id}
\DeclareMathOperator{\diag}{diag}
\DeclareMathOperator{\Spec}{Spec}
\DeclareMathOperator{\arccot}{arccot}
\DeclareMathOperator{\Vol}{Vol}
\DeclareMathOperator{\rank}{rank}
\DeclareMathOperator{\erfc}{erfc}
\newcommand{\mc}[1]{\mathcal{#1}}
\newcommand{\mf}[1]{\mathfrak{#1}}
\newcommand{\ms}[1]{\mathscr{#1}}
\newcommand{\ii}{\mathrm{i}}
\newcommand{\defeq}{\vcentcolon=}
\newcommand{\eqdef}{=\vcentcolon}
\renewcommand{\epsilon}{\varepsilon}
\renewcommand{\leq}{\leqslant}
\renewcommand{\geq}{\geqslant}
\renewcommand{\le}{\leq}
\renewcommand{\ge}{\geq}
\renewcommand{\P}{\mathbb{P}}
\newcommand{\E}{\mathbb{E}}
\newcommand{\R}{\mathbb{R}}
\newcommand{\N}{\mathbb{N}}
\newcommand{\abs}[1]{\left\lvert #1 \right\rvert}
\newcommand{\vertiii}[1]{{\left\vert\kern-0.25ex\left\vert\kern-0.25ex\left\vert #1 
    \right\vert\kern-0.25ex\right\vert\kern-0.25ex\right\vert}}
\newcommand{\ip}[1]{\left\langle #1 \right\rangle}
\newcommand{\diff}{\mathop{}\!\mathrm{d}}
\theoremstyle{plain} %plain, definition, remark
\newtheorem{thm}{Theorem}[section]
\newtheorem{lem}[thm]{Lemma}
\newtheorem{prop}[thm]{Proposition}
\newtheorem{rem}[thm]{Remark}
\definecolor{color1}{RGB}{119,170,221} % light blue
\definecolor{color2}{RGB}{153,221,255} % light cyan
\definecolor{color3}{RGB}{68, 187, 153} % mint
\definecolor{color4}{RGB}{187,204,51} % pear
\definecolor{color5}{RGB}{170,170,0} % olive
\definecolor{color6}{RGB}{238,221,136} % light yellow
\definecolor{color7}{RGB}{238,136,102} % orange
\definecolor{color8}{RGB}{255,170,187} % pink
\definecolor{color9}{RGB}{221,221,221} % pale grey
\definecolor{color10}{RGB}{136,34,85} % wine
\renewcommand{\section}{\@startsection
{section}%                   % the name
{1}%                         % the level
{0mm}%                       % the indent
{-2\baselineskip}%            % the before skip
{1\baselineskip}%          % the after skip
{\normalfont\large\scshape\centering}} % the style
\renewcommand{\subsection}{\@startsection
{subsection}%                   % the name
{2}%                         % the level
{0mm}%                       % the indent
{-\baselineskip}%            % the before skip
{0 \baselineskip}%          % the after skip
{\normalfont\bf\itshape}} % the style
\renewcommand{\subsubsection}{\@startsection
{subsubsection}%                   % the name
{3}%                         % the level
{0mm}%                       % the indent
{-\baselineskip}%            % the before skip
{0 \baselineskip}%          % the after skip
{\normalfont\bf\itshape}} % the style
\def\author#1{\par
    {\centering{\authorfont#1}\par\vspace*{0.05in}}
}
\def\titlefont{\fontsize{13}{15}\bfseries\boldmath\selectfont\centering{}}
\def\authorfont{\fontsize{13}{15}}
\let\affiliationfont\rhfont
\def\address#1{\par
    {\centering{\affiliationfont#1\par}}\par\vspace*{11pt}
}
\def\title#1{
    \thispagestyle{plain}
    \vspace*{-14pt}
    \vskip 79pt
    {\centering{\titlefont #1\par}}%
    \vskip 1em
}
\begin{document}
%%%%%%%%%%%%%%%%%%%%%%%%%%%%%%%%%%%%%%%%%%%%%%%%%%%%%%%%%%%%%%%%%%%%%%%%%%%	

~\vspace{-1.4cm}

\title{Extremal Statistics of Quadratic Forms of GOE/GUE Eigenvectors}

\vspace{1cm}
\noindent

\begin{center}

\begin{minipage}[c]{0.5\textwidth}
 \author{L\'{a}szl\'{o} Erd\H{o}s}%
\address{Institute of Science and Technology Austria (ISTA)\\%
   E-mail: lerdos@ista.ac.at}%
 \end{minipage}%
\begin{minipage}[c]{0.5\textwidth}
 \author{Benjamin M\textsuperscript{c}Kenna}%
\address{Institute of Science and Technology Austria (ISTA)\\%
   E-mail: bmckenna@fas.harvard.edu}%
 \end{minipage}%
 
\end{center}

\begin{abstract}
We consider quadratic forms of deterministic matrices $A$ evaluated at the random eigenvectors of 
a large $N\times N$ GOE or GUE matrix, or equivalently evaluated at the columns of a Haar-orthogonal or Haar-unitary random matrix. 
We prove that, as long as the deterministic matrix has rank much smaller than $\sqrt{N}$, 
 the distributions of the extrema of these quadratic forms are asymptotically the same as if the eigenvectors were
  independent Gaussians. 
  This reduces the problem to Gaussian computations, which we carry out in several cases 
  to illustrate our result, finding Gumbel or Weibull limiting distributions depending on the signature of $A$. 
Our result also naturally applies to the eigenvectors of any invariant ensemble.
\end{abstract}

\vspace*{0.05in}

\noindent \emph{Date:} October 6, 2022

\vspace*{0.05in}

\noindent \hangindent=0.2in \emph{Keywords and phrases:} Gaussian Orthogonal Ensemble, Gaussian Unitary Ensemble, Haar measure, extreme value statistics, Gumbel distribution, Weibull distribution, Gram--Schmidt

\vspace*{0.05in}

\noindent \emph{2020 Mathematics Subject Classification:} 60B20, 15B52, 60G70, 60B15, 81Q50

{
	\hypersetup{linkcolor=black}
	\tableofcontents
}

%%%%%%%%%%%%%%%%%%%%%%%%%%%%%%%%%%%%%%%%%%%%%%%%%%%%%%%%%%%%
%%%%%%%%%%%%%%%%%%%%%%%%%%%%%%%%%%%%%%%%%%%%%%%%%%%%%%%%%%%%
%%%%%%%%
%%%%%%%%             Section: Introduction
%%%%%%%%
%%%%%%%%%%%%%%%%%%%%%%%%%%%%%%%%%%%%%%%%%%%%%%%%%%%%%%%%%%%%
%%%%%%%%%%%%%%%%%%%%%%%%%%%%%%%%%%%%%%%%%%%%%%%%%%%%%%%%%%%%

\newpage

\section{Introduction}

%%%%%%%%%%%%%%%%%%%%%%%%%%%%%%%%%%%%%%%%%%%%%%%%%%%%%%%%%%%%
%%%%%%%%             Subsection: Overview
%%%%%%%%%%%%%%%%%%%%%%%%%%%%%%%%%%%%%%%%%%%%%%%%%%%%%%%%%%%%

\subsection{Overview.}\

This paper studies fluctuations of the random variables
\begin{equation}
\label{eqn:intro_variables}
	\max_{i=1}^N \ip{\mathbf{u}_i, A_N \mathbf{u}_i} \qquad \text{and} \qquad \max_{i=1}^N \abs{\ip{\mathbf{u}_i, A_N \mathbf{u}_i}}
\end{equation}
asymptotically in the large-$N$ limit. Here the $(\mathbf{u}_i = \mathbf{u}^{(N)}_i)_{i=1}^N$ are the $\ell^2$-normalized eigenvectors of an $N \times N$ random matrix drawn from the Gaussian Orthogonal Ensemble (GOE), and $(A_N)_{N=1}^\infty$ is a sequence of deterministic $N \times N$ real-symmetric matrices, which can be chosen freely subject to an important rank restriction, which we state informally as
\begin{equation}
\label{eqn:intro_rank}
	\rank(A_N) \ll N^{1/2}.
\end{equation}
We recall that the $N \times N$ GOE consists of real-symmetric random matrices $H_N$, whose entries are centered Gaussians, independent up to the symmetry constraint, normalized as $\E[(H_N)_{ij}^2] = \frac{1}{N}(1+\delta_{ij})$. Since GOE eigenvectors are distributed as the columns of a Haar-orthogonal random matrix, the variables \eqref{eqn:intro_variables} can equivalently be thought of as observables of Haar measure.

The joint distribution of a few Haar columns is almost that of independent Gaussian vectors. Our main result, very informally, says that this approximation holds for extremal statistics of quadratic forms involving $\ll \sqrt{N}$ columns. More precisely, our main
 Theorem \ref{thm:main} states that, under the rank restriction \eqref{eqn:intro_rank}, the variables \eqref{eqn:intro_variables} have the same fluctuations as their Gaussian counterparts, i.e., as the analogues of \eqref{eqn:intro_variables} with the vectors $\mathbf{u}_i$ replaced by appropriately normalized i.i.d. Gaussian vectors. This reduces the study of \eqref{eqn:intro_variables} to (much easier) Gaussian computations, which can be carried out for any choice of $A_N$, with an outcome depending on the signature structure of $A_N$ and on the multiplicity of its extreme eigenvalues. To keep the paper at a manageable length, however, we only present these computations for some representative choices of $A_N$. We study all the cases where $A_N$ has an $N$-independent rank, plus the case where $A_N = \diag(1,\ldots,1,0,\ldots,0)$ with $\rank(A_N) \approx N^\alpha$ for some $0 < \alpha < 1/2$. This leads to explicit limit laws for the variables \eqref{eqn:intro_variables} in these special cases (Theorems \ref{thm:main_fixed} and \ref{thm:main_diverging}, respectively). The limiting distributions in these cases are those of classical extreme value theory: mostly Gumbel, except for the special case of $\max_{i=1}^N \ip{\mathbf{u}_i, A_N \mathbf{u}_i}$ when $A_N$ has finite rank and all eigenvalues negative, in which case the limit is Weibull. Our precise theorems have many cases to cover every possible situation, but here we just give one representative result for easy orientation:

\begin{thm}
\label{thm:intro_example}
Fix $k \in \N$, and $k$ nonzero real numbers ordered as $a_1 \leq \cdots \leq a_k \eqdef a$. Notate the multiplicity of the largest one as $m \defeq \#\{i : a_i = a\}$. 
 If 
\[
	A_N = \diag(a_1, \ldots, a_k, 0, \ldots, 0),
\]
and $a > 0$, then 
\[
	\frac{N}{2a} \max_{i=1}^N \ip{\mathbf{u}_i, A_N \mathbf{u}_i} - \log N + \left(1-\frac{m}{2}\right) \log \log N + \log\left(\Gamma\left(\frac{m}{2}\right)\sqrt{\prod_{j=1}^{k-m}\left(1-\frac{a_j}{a}\right)}\right) \overset{N \to \infty}{\to} \Lambda \quad \text{in distribution,}
\]
where $\Lambda$ is a Gumbel-distributed random variable, with distribution function $\P(\Lambda \leq x) = \exp(-e^{-x})$.
\end{thm}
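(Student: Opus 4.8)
\emph{Strategy.} This is the $a>0$ subcase of the finite-rank result Theorem~\ref{thm:main_fixed}, and it illustrates the two-step philosophy of the paper: reduce to independent Gaussians via the comparison Theorem~\ref{thm:main}, then carry out the Gaussian computation. Since $\rank(A_N)=k$ is fixed, $\ip{\mathbf u_i,A_N\mathbf u_i}=\sum_{j=1}^k a_j u_{i,j}^2$ depends only on the first $k$ coordinates of $\mathbf u_i$. By Theorem~\ref{thm:main}, the fluctuations of $\max_{i=1}^N\ip{\mathbf u_i,A_N\mathbf u_i}$ agree in the limit with those of the same functional of i.i.d.\ normalized Gaussian columns $\mathbf g_i/\norm{\mathbf g_i}$, where $\mathbf g_i=(g_{i,1},\dots,g_{i,N})$ has i.i.d.\ standard Gaussian entries. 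Because $\max_{i\le N}\abs{\,\norm{\mathbf g_i}^2/N-1\,}=O(\sqrt{\log N/N})$ while $\max_{i\le N}\sum_{j=1}^k a_j g_{i,j}^2$ is of order $\log N$, the common multiplicative renormalization error, after multiplying by $N/(2a)$, is of size $O((\log N)^{3/2}/\sqrt N)=o(1)$; hence it suffices to analyze $\tfrac1{2a}\max_{i=1}^N X_i$, where the $X_i=\sum_{j=1}^k a_j g_{i,j}^2$ are i.i.d.\ copies of $X\defeq\sum_{j=1}^k a_j z_j^2$ with $z_j$ i.i.d.\ standard Gaussian.

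\emph{Right tail of $X$.} Write $X=aR+W$, where $R\sim\chi^2_m$ comes from the $m$ coordinates carrying the top weight $a$ and $W=\sum_{j=1}^{k-m} a_j z_j^2$ is independent of $R$. Using the Gamma tail $\P(aR>x)\sim\frac{(2a)^{1-m/2}}{\Gamma(m/2)}x^{m/2-1}e^{-x/(2a)}$, the fact that $W$ has a strictly lighter exponential tail, and $\E[e^{W/(2a)}]=\prod_{j=1}^{k-m}(1-a_j/a)^{-1/2}\eqdef\kappa<\infty$, a standard convolution-tail argument (equivalently, singularity analysis of $\E[e^{tX}]=\prod_{j=1}^k(1-2a_jt)^{-1/2}$ at its dominant singularity $t=1/(2a)$, which is of order $(1-2at)^{-m/2}$) gives
\[
	\P(X>x)\ \sim\ C\,x^{m/2-1}\,e^{-x/(2a)},\qquad C=\frac{(2a)^{1-m/2}}{\Gamma(m/2)}\,\kappa,\qquad x\to\infty.
\]

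\emph{Extreme value inversion.} This tail places $X$ in the Gumbel max-domain of attraction (the von Mises condition holds, with auxiliary function tending to $2a$), so with scaling $2a$ there is a centering sequence $c_N$ with $N\,\P(X>c_N+2a\,x)\to e^{-x}$, i.e.\ $\log N+\log C+(\tfrac m2-1)\log c_N-\tfrac{c_N}{2a}\to 0$, and hence $\max_{i=1}^N X_i-c_N\to 2a\Lambda$ in distribution. Solving asymptotically, so that $c_N=2a\log N\,(1+o(1))$ and $\log c_N=\log\log N+\log(2a)+o(1)$, and observing that the two $\log(2a)$ contributions cancel against the $(2a)^{1-m/2}$ inside $C$, one obtains
\[
	\frac{c_N}{2a}\ =\ \log N+\Big(\frac m2-1\Big)\log\log N-\log\!\left(\Gamma\!\Big(\tfrac m2\Big)\sqrt{\textstyle\prod_{j=1}^{k-m}\big(1-\tfrac{a_j}{a}\big)}\right)+o(1).
\]
Combining with the reduction above, $\frac{N}{2a}\max_i\ip{\mathbf u_i,A_N\mathbf u_i}-\frac{c_N}{2a}\to\Lambda$, which is exactly the claimed statement since $\tfrac m2-1=-(1-\tfrac m2)$.

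\emph{Main obstacle.} The delicate point is the tail estimate with the \emph{exact} constant: one must cleanly split off the $m$ maximal weights, which produce the $\Gamma(m/2)$ and the polynomial prefactor $x^{m/2-1}$ (the latter being responsible for the $(1-\tfrac m2)\log\log N$ term in the centering), from the strictly smaller weights, which produce the factor $\prod_{j=1}^{k-m}(1-a_j/a)^{-1/2}$, and then track this constant faithfully through the extreme-value inversion. The reduction step is conceptually supplied by Theorem~\ref{thm:main}; the only thing to verify there is that the Gaussian-column renormalization error is negligible on the $\log N$ scale after the $N/(2a)$ rescaling, as indicated above.
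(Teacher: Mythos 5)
Your proposal is correct, and it follows the same two-step skeleton as the paper (this is forced: the theorem is the representative case of Theorem~\ref{thm:main_fixed}, obtained by combining the comparison Theorem~\ref{thm:main} with a Gaussian extreme-value computation), but your Gaussian computation takes a genuinely different route. The paper (Lemma~\ref{lem:rankk_allpos}) derives the tail $\P(\sum_{j=1}^k a_j z_j^2 \geq t)$ by passing to $k$-dimensional hyperspherical coordinates and applying the Laplace method, where the $m-1$ angles on which the phase does not depend produce the $\Gamma(m/2)$ factor and the Hessian determinant produces $\prod_{j\leq k-m}(a-a_j)$; it then avoids redoing the extreme-value inversion by invoking the tail-equivalence criterion \cite[Proposition 3.3.28]{EmbKluMik1997} against the explicitly known Gumbel constants for $a\,\chi^2_m$. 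You instead split $X=aR+W$ with $R\sim\chi^2_m$ and use the convolution/mgf-singularity argument to get $\P(X>x)\sim C x^{m/2-1}e^{-x/(2a)}$ with $C=\frac{(2a)^{1-m/2}}{\Gamma(m/2)}\prod_{j=1}^{k-m}(1-a_j/a)^{-1/2}$ (which matches the paper's constant exactly), and then solve for the centering directly; this is cleaner and more probabilistic for the all-tied-top-sign case at hand, whereas the paper's hyperspherical setup is reused verbatim for the mixed-sign, absolute-value, and Weibull cases. Two small points: (i) the convolution-tail step does require the routine uniformity/dominated-convergence check over the event where $W$ is of order $x$, which is available here precisely because all $a_j<a$ strictly, so $\E[e^{tW}]<\infty$ for $t$ slightly beyond $1/(2a)$; (ii) your opening paraphrase of Theorem~\ref{thm:main} as a comparison with \emph{normalized} Gaussian columns $\mathbf{g}_i/\norm{\mathbf{g}_i}$ is not what that theorem states --- it compares $c_N\max_i N\ip{\mathbf{u}_i,A_N\mathbf{u}_i}+d_N$ directly with $c_N\max_i\ip{\mathbf{y}_i,A_N\mathbf{y}_i}+d_N$ for unnormalized i.i.d.\ Gaussians, with bounded $c_N$ (here $c_N=1/(2a)$), so your renormalization estimate $O((\log N)^{3/2}/\sqrt{N})$ is superfluous; the correct usage is simply to prove the Gumbel limit for the i.i.d.\ variables $X_i=\ip{\mathbf{y}_i,A_N\mathbf{y}_i}$ and then transfer it, which is what your argument in substance does.
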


This result is formulated for GOE for definiteness,  but 
 it clearly also applies to any random matrix whose eigenvectors are Haar distributed.
  In particular it applies to invariant ensembles, i.e., real-symmetric $N \times N$ random matrices $X_N$ 
  such that, for any deterministic orthogonal $O$, the matrices $X_N$ and $OX_NO^{-1}$ have the same distribution. 
  This includes smooth invariant ensembles, i.e., random matrices $X_N$ whose law has a density with respect to
   Lebesgue measure that is proportional to $\exp(- N \Tr V(X_N))$ for some mildly regular function $V : \R \to \R$. 
   
   While stated for real symmetric ensembles, 
   all our results as well as their proofs also hold for the complex Hermitian case, meaning they apply to Haar unitary matrices, and consequently
    to Gaussian Unitary Ensemble (GUE) and to complex Hermitian invariant ensembles. 
    In fact, the very special rank-one case for GUE has already appeared in the physics literature
 \cite{LakTomBohMaj2008}  in the context of detecting quantum chaos. 
 See Remark~\ref{physics} for more details and possible further applications in physics.
    
Finally, one can also ask about universality beyond invariant ensembles, e.g.
 in the class of Wigner matrices, which are real symmetric random matrices whose i.i.d. upper-triangular entries are not necessarily Gaussian. 
 We conjecture that Theorem \ref{thm:intro_example} holds at least when the $\mathbf{u}_i$'s are the eigenvectors of 
 a large class of Wigner matrices.

%%%%%%%%%%%%%%%%%%%%%%%%%%%%%%%%%%%%%%%%%%%%%%%%%%%%%%%%%%%%
%%%%%%%%             Subsection: Previous results on extremal statistics in random matrices
%%%%%%%%%%%%%%%%%%%%%%%%%%%%%%%%%%%%%%%%%%%%%%%%%%%%%%%%%%%%

\subsection{Previous results on extremal statistics in random matrices.}\

Classical extremal statistics is primarily concerned with three named probability distributions -- Gumbel, Weibull, and Fr\'echet, which we will call \emph{classical} -- because of the Fisher-Tippett-Gnedenko theorem, which states that these three are the only possible limiting laws for the normalized maximum of independent and identically distributed random variables. But random-matrix eigen\emph{values} tend to be strongly correlated, so their extremes often (although not always) have non-classical limiting distributions. The most prominent of these is the Tracy-Widom distribution, which was first introduced by Tracy and Widom to describe the fluctuation of the top eigenvalue  of GOE or GUE matrices \cite{TraWid1993, TraWid1994, TraWid1996}, but later extended to a large class of models, demonstrating the universality phenomenon \cite{Sos1999, Ruz2006, TaoVu2010, Sod2010, BouErdYau2014, LeeYin2014, LeeSch2015, AltErdKruSch2020}. As another example, we mention results on the variables $X_N = \max_{\theta \in [0,2\pi]} \abs{\det(e^{\ii \theta} - U_N)}$, where $U_N$ is a random matrix from the so-called Circular Unitary Ensemble. These variables are conjectured \cite{FyoKea2007, FyoHiaKea2012} to satisfy
\[
	X_N - (\log N - \frac{3}{4}\log \log N) \overset{N \to \infty}{\to} \frac{1}{2}(G_1+G_2) \qquad \text{in distribution},
\]
where $G_1$ and $G_2$ are independent Gumbel variables, i.e., the limiting distribution is supposed to be \emph{almost} classical. Recent results have proved that $X_N - (\log N - \frac{3}{4} \log \log N)$ is tight \cite{ArgBelBou2017, PaqZei2018, ChhMadNaj2018}.

For non-Hermitian random matrices, however, classical extremal statistics
prevail: the spectral radius and the real part of the rightmost eigenvalue of
a Ginibre matrix follow the Gumbel distribution after an appropriate shift and rescaling
\cite{Kos1992, Rid2003, Ben2010, AkePhi2014, RidSin2014, CipErdSchXu2022directional}. The same limit behavior is conjectured for general entry distribution (universality); see \cite{CipErdSchXu2022rightmost} for the most recent result in this direction.

In contrast to the eigen\emph{values}, the eigen\emph{vectors} tend to be much less correlated, almost independent, so their
statistics tend to have the usual 
 limiting distributions. This has been demonstrated in various results on quantum unique ergodicity (that we will survey in Section \ref{subsec:delocalization}), and our current result is in line with these findings: GOE eigenvectors are so weakly dependent that even their extremes (at the level of low-rank quadratic forms) fall into the universality class of classical extremal statistics.

%%%%%%%%%%%%%%%%%%%%%%%%%%%%%%%%%%%%%%%%%%%%%%%%%%%%%%%%%%%%
%%%%%%%%             Subsection: Previous results on Gaussian approximations fo GOE eigenvectors
%%%%%%%%%%%%%%%%%%%%%%%%%%%%%%%%%%%%%%%%%%%%%%%%%%%%%%%%%%%%

\subsection{Previous results on Gaussian approximations to GOE eigenvectors.}\label{sec:13}\

It is classical that GOE eigenvectors have the exact same distribution as the Gram--Schmidt orthonormalizations of i.i.d. standard Gaussian vectors. This simple fact has appeared in various forms in the literature, e.g. in \cite[Proposition 7.2]{Eat2007}, \cite[pp. 12-13]{Mec2019}, \cite[Fact 8]{GuiMai2005}. Here we recall its easy proof that goes via the Haar measure. Consider the $N \times N$ orthogonal matrix whose columns are $\ell^2$-normalized GOE eigenvectors. 
Since the GOE is invariant under conjugation by any deterministic orthogonal matrix, so is this eigenvector matrix. 
Thus it must be Haar distributed on the orthogonal group. On the other hand, one can easily check that the orthogonal matrix 
obtained by starting with columns that are independent standard Gaussian vectors and performing Gram--Schmidt orthonormalization 
on the columns has the same invariance property, so it must also be Haar distributed. But the Haar measure is unique, so they coincide in law.

However, roughly speaking, high-dimensional standard Gaussian vectors are almost orthogonal, and their norms are almost deterministically $\sqrt{N}$. This might seem to indicate that Gram--Schmidt does little beyond a rescaling, so that Haar entries are almost independent Gaussians. This principle first appeared in a 1906 result of Borel, showing that one entry of a Haar-distributed matrix behaves like a Gaussian variable \cite{Bor1906}. One can also ask for this approximation to hold \emph{jointly} in the entries, and as long as only a few Gram--Schmidt steps are performed, this essentially works. Not only the individual eigenvectors, but also their correlations and the joint distribution of not too many of them is almost Gaussian. But one has to be careful about applying this heuristic to a large number of columns: While each vector is close to Gaussian and their pair correlations are very weak, these may add up if we consider a family of too many of them. This can lead to a \emph{thresholding} phenomenon, where the ``Haar measure is almost independent Gaussian'' heuristic is good up to a certain number of joint entries, and fails beyond it.

Of course, this threshold can depend on both the observable being compared and the norms used to compare. We give several examples: First, if $Y = (y_{ij})_{i,j=1}^N$ is the matrix of independent Gaussians and $\Gamma = (\gamma_{ij})_{i,j=1}^N$ is the result after 
applying Gram--Schmidt on its columns,
 one can ask for the largest number of columns $m = m_N$ such that $\max_{i=1}^n \max_{j=1}^m |\sqrt{N}\gamma_{ij} - y_{ij}|$ still
 tends to zero in probability. 
 Jiang in  \cite{Jia2005}, \cite{Jia2006}
  identified this threshold by showing that one can take $m_N = \oo(N/(\log N))$ and that one \emph{cannot} take $m_N = CN/(\log N)$.

Second, one can ask for the largest $p_N$ and $q_N$ such that the total variation distance between the law of the $p_N \times q_N$ \emph{principal minors} of $Y$ and $\Gamma$ tends to zero. Diaconis, Eaton, and Lauritzen showed that one can take $p_N, q_N = \oo(N^{1/3})$ \cite{DiaEatLau1992}, which was improved to $p_N, q_N = \OO(N^{1/3})$ in the thesis of Collins \cite[Theorem 4.4.2]{Col2003}. Then Jiang showed both that one can take $p_N, q_N = \oo(N^{1/2})$, and the total-variation distance does \emph{not} tend to zero if $p_N, q_N = CN^{1/2}$ \cite{Jia2006}. 
Later, in simultaneous independent work, both Stewart \cite{Ste2020} and Jiang and Ma \cite{JiaMa2019} showed that one can take non-square $p_N \times q_N$ minors as long as $p_Nq_N = \oo(N)$. 
 In this context, our result says that $p_N = \OO(N^{1/2-\delta})$ and $q_N = N$ are still jointly Gaussian as far as the extremal statistics of the corresponding quadratic forms are concerned, even if they are not close in total variation. 
 
The papers \cite{Jia2005, Jia2006} are important for us; we comment on the relationship between them and our work in Remark \ref{rem:jiang_relationship} below. In fact, with a relatively simple argument, Jiang's result can be used to prove a weaker version of our main result, with the restriction \eqref{eqn:intro_rank} replaced with the non-optimal condition $\rank(A_N) \ll N^{1/3}$; see Remark \ref{rem:jiang} for details.

Finally, we mention some other related results. Jiang also showed the largest entry of a Haar-distributed matrix has Gumbel fluctuations \cite{Jia2005}, a result which had been conjectured on the basis of numerics by Donoho and Huo \cite{DonHuo2001}. Similar techniques  have been used by Guionnet and Ma\"ida in their study of rank-one Harish-Chandra--Itzykson--Zuber integrals \cite{GuiMai2005}.

%%%%%%%%%%%%%%%%%%%%%%%%%%%%%%%%%%%%%%%%%%%%%%%%%%%%%%%%%%%%
%%%%%%%%             Subsection: Previous results on quadratic forms of random-matrix eigenvectors
%%%%%%%%%%%%%%%%%%%%%%%%%%%%%%%%%%%%%%%%%%%%%%%%%%%%%%%%%%%%

\subsection{Previous results on quadratic forms of random-matrix eigenvectors.}\
\label{subsec:delocalization}

Owing to their quantum physics interpretation as measurable quantities, 
the variables $(\ip{\mathbf{u}_i, A_N \mathbf{u}_i})_{i=1}^N$ have a long history in random matrices, although to our knowledge this paper is the first study of their \emph{extremal} statistics. One motivation for these variables is
\emph{delocalization}, i.e.
the idea that for sufficiently mean-field random matrices the $\ell^2$-mass of the 
 eigenvector $\mathbf{u}_i$ should be approximately uniformly distributed across its components $\mathbf{u}_i(\alpha)$.
  For example, as an \emph{entrywise} bound one can prove that
\begin{equation}
\label{eqn:intro_delocalization_single}
	\abs{\mathbf{u}_i(\alpha)} \ll \frac{N^\epsilon}{\sqrt{N}} \quad \text{with high probability for each $i$ and $\alpha$},
\end{equation}
or even the stronger result
\begin{equation}
\label{eqn:intro_delocalization}
	\max_{i=1}^N \max_{\alpha = 1}^N \abs{\mathbf{u}_i(\alpha)} \ll \frac{N^\epsilon}{\sqrt{N}} \quad \text{with high probability},
\end{equation}
which is tight up to the factor $N^\epsilon$ since the normalized eigenvector has unit mass. 
  Of course, one can interpret $\mathbf{u}_i(\alpha) = \ip{\mathbf{u}_i,\mathbf{e}_\alpha}$ as a projection onto the $\alpha$th standard basis vector $\mathbf{e}_\alpha$, and from this perspective there is no reason to single out the standard basis. One could equally study $\ip{\mathbf{u}_i,\mathbf{q}}$ for some deterministic unit vector $\mathbf{q}$. This leads
 to the special case of our quadratic form  when $A_N$ has rank one, since
\[
	\abs{\ip{\mathbf{u}_i,\mathbf{q}}} = \sqrt{\ip{\mathbf{u}_i, (\mathbf{q}\mathbf{q}^T) \mathbf{u}_i}}.
\]
In the literature on random-matrix eigenvectors (and more generally on local laws 
for resolvents), results with generic $\mathbf{q}$ instead of $\mathbf{e}_\alpha$, which first appeared in \cite{KnoYin2013isotropic}, are called \emph{isotropic}. For example, 
 \cite{KnoYin2013isotropic} showed that 
\eqref{eqn:intro_delocalization_single} holds for $\ip{\mathbf{u}_i,\mathbf{q}}$ with any fixed deterministic
unit vector $\mathbf{q}$. 

From our perspective, high probability size estimates like \eqref{eqn:intro_delocalization_single} and \eqref{eqn:intro_delocalization} 
can be complemented and refined by distributional results of two types: 
\begin{itemize}
\item[(i)] On the one hand, in the spirit of \eqref{eqn:intro_delocalization_single}, one can show that any given $N^{1/2} \ip{\mathbf{u}_i,\mathbf{q}}$ is asymptotically Gaussian. This is a distributional result about typical behavior. 
\item[(ii)] On the other hand, in the spirit of \eqref{eqn:intro_delocalization}, one can show that 
$\max_{i=1}^N N \ip{\mathbf{u}_i,\mathbf{q}}^2$  after an appropriate shift 
 is asymptotically Gumbel. This is a distributional result about extremal behavior.
\end{itemize}
All of these results are short exercises in the special case of GOE eigenvectors, but they are highly nontrivial for other ensembles, such as general Wigner matrices, where orthogonal invariance is not present.
Generally speaking, in the literature
there are more size estimates like \eqref{eqn:intro_delocalization_single} and \eqref{eqn:intro_delocalization} than there are distributional results, and more distributional results on typical behavior than there are on extremal behavior. 

Size estimates like \eqref{eqn:intro_delocalization} were proved first for Wigner matrices in \cite{ErdSchYau2009}, then in \cite{ErdYauYin2012B} for a broader class called generalized Wigner matrices, as simple corollaries of \emph{optimal local laws}. The Gaussian fluctuations of $\sqrt{N}\ip{\mathbf{u}_i, \mathbf{q}}$ were established for generalized Wigner matrices first under a four-moment-matching condition in the bulk \cite{KnoYin2013eigenvector, TaoVu2012random} and a two-moment-matching condition at the edge \cite{KnoYin2013eigenvector} using 
comparison techniques for resolvents, then in full generality \cite{BouYau2017} using Dyson-Brownian motion techniques.

To the best of our knowledge, Gumbel fluctuations of $\max_i N  \ip{\mathbf{u}_i, \mathbf{q}}^2$ are still open beyond GOE and GUE.
 The corresponding size estimates have been obtained only recently for generalized Wigner matrices with subexponential entries, in work of Benigni and Lopatto \cite[Theorem 1.2]{BenLop2022}, who showed that for every $D > 0$ there exists $C_D > 0$ with
\begin{equation}
\label{eqn:benignilopatto}
	\sup_{\mathbf{q} \in \mathbb{S}^{N-1}} \P\left( \max_{i = 1}^N N \ip{\mathbf{u}_i,\mathbf{q}}^2 \geq C_D\log N \right) \leq C_DN^{-D}.
\end{equation}
In comparison, the rank-one case of our result shows, informally, that for GOE eigenvectors
\begin{equation}\label{rank1}
	\max_{i=1}^N N \ip{\mathbf{u}_i,\mathbf{q}}^2 \approx 2\log N - \log \log N - \log \pi + 2\Lambda
\end{equation}
with $\Lambda$ a Gumbel variable. The analogous result for
GUE is:
\begin{equation}\label{rank1gue}
	\max_{i=1}^N N \ip{\mathbf{u}_i,\mathbf{q}}^2 \approx \log N + \Lambda;
\end{equation}
see also \cite{LakTomBohMaj2008} and Remark~\ref{physics}.

Our goal is to understand the distributional analogue of the high probability bound \eqref{eqn:benignilopatto}
  in the special case of GOE and GUE, but for higher-rank observables. This extension is naturally related to the celebrated phenomena of \emph{quantum ergodicity} (QE) and \emph{quantum unique ergodicity} (QUE) that 
  appear in a variety of disordered quantum systems. They say roughly that, for $\psi_1, \psi_2, \ldots$ the eigenfunctions of some Hamiltonian operator, $A$ a deterministic operator (\emph{``observable''})
 in some appropriate class, and $f$ a model-dependent
 linear functional on this class, we have
\[
	\lim_{i, j \to \infty} \ip{\psi_i, A \psi_j} \to \delta_{ij} f(A).
\]
Results proving this for \emph{most} pairs $(i,j)$, sometimes by averaging in $i$ and $j$, are termed QE. 
Results proving this along  \emph{all} $(i,j)$ sequences are termed QUE.
 In the context of the Laplace-Beltrami operator on a Riemannian manifold with an ergodic geodesic flow,
  QUE was defined and conjectured by Rudnick and Sarnak \cite{RudSar1994}, following earlier QE results for the same model by Shnirelman \cite{Sni1974}, Colin de Verdi\`{e}re \cite{Col1985}, and Zelditch \cite{Zel1987}. 
  In this context  $A$ is from a suitable class of pseudodifferential operators and $f(A)$ is the integral of its symbol on the unit cotangent bundle.
  To date, the conjectures of Rudnick and Sarnak have been proved only in certain special cases of arithmetic surfaces \cite{Lin2006, Sou2010, HolSou2010}. In recent years, these phenomena have also been established in regular graphs, both deterministic (in work of Anantharaman and Le Masson \cite{AnaLeM2015}) and random (in work of Bauerschmidt, Huang, Knowles, and Yau \cite{BauHuaYau2019, BauKnoYau2017}).

In random matrices, QE and QUE naturally appear when considering $\ip{\mathbf{u}_i, A_N \mathbf{u}_j}$ for $(\mathbf{u}_i)_{i=1}^N$ the eigenvectors of the random matrix and $A_N$ a deterministic matrix. For generalized Wigner matrices, Bourgade and Yau 
in  \cite{BouYau2017} showed a local form of QUE in the bulk of the spectrum, namely that for $i$ any bulk index we have  
\[
	\frac{N}{\rank(A_N)} \Big(\ip{\mathbf{u}_i, A_N \mathbf{u}_i} - \frac{1}{N}\Tr A_N\Big) \to 0 \quad \text{in probability, as long as } \rank(A_N) \to \infty.
\]
The corresponding QUE result, showing this limit for all eigenvectors simultaneously, was established in \cite{BouYauYin2020} for matrices with a Gaussian component, strengthened in \cite{Ben2021}, and resolved in full generality for Wigner matrices 
 in \cite{CipErdSch2021} with the optimal speed of convergence.  These QUE statements are the analogues of  \eqref{eqn:intro_delocalization_single}
for general observables. Concerning the refinement of these QUE  results in the distributional direction for the typical behavior
(in the spirit of (i) above), we mention that
Gaussian fluctuations around the QUE  were proven in  \cite{BenLop2022QUE} for special low-rank observables for all $i$, in 
\cite{CipErdSch2022normal} for general full-rank observables, and finally for any observable in \cite{CipErdSch2022rankuniform} in the bulk.
 More precisely, Theorem 2.8 in \cite{CipErdSch2022rankuniform} asserts that for any Wigner eigenvector $\mathbf{u}_i$ with 
 eigenvalue away from the spectral edges we have
 \begin{equation}\label{CLTQUE}
  Q_N(i):=\frac{N}{\sqrt{2 \Tr (\mathring{A}_N)^2}} \Big(\ip{\mathbf{u}_i, A_N \mathbf{u}_i} - \frac{1}{N}\Tr A_N\Big) \overset{N \to \infty}{\to} \mathcal{N}
 \end{equation}
 as long as $\Tr (\mathring{A}_N)^2\to \infty$, 
 where $\mathring{A}_N: = A_N - \frac{1}{N} \Tr A_N$ is the traceless part of $A_N$ and $\mathcal{N}$  stands for the standard normal distribution. 
 
 In this paper we prove distributional results for the corresponding extremal statistics in the spirit of (ii) above.
 The asymptotically normal random variables $Q_N(i)$ are expected to be almost independent for different indices $i$, hence
 their extremal statistics should be classical. 
 Indeed, our Theorem~\ref{thm:main_diverging} below shows the Gumbel distribution emerging for $\max_i Q_N(i)$,
 at least for GOE/GUE eigenvectors  and $A_N=\diag(1,1, \ldots 1, 0, \ldots 0)$.  When $A_N$ has bounded rank, but is otherwise general, then
 \eqref{CLTQUE} does not hold since  $\ip{\mathbf{u}_i, A_N \mathbf{u}_i}$ is the sum of a few 
 asymptotically $\chi^2$ distributed random variables. Nevertheless the extremal statistics of the $Q_N(i)$'s are still classical: either Gumbel or Weibull,
 depending on the signature of $A_N$  (see Theorem~\ref{thm:main_fixed}). These results may be considered
 as yet another manifestation of the chaotic behavior of the GOE/GUE eigenvectors via a large class of statistics (parametrized by $A_N$) that are sensitive to \emph{all} eigenvectors at the same time.

%Our current results can be seen as the distributional results for the corresponding extremal statistics in the spirit of (ii) above.

%%%%%%%%%%%%%%%%%%%%%%%%%%%%%%%%%%%%%%%%%%%%%%%%%%%%%%%%%%%%
%%%%%%%%             Subsection: Organization and Notation
%%%%%%%%%%%%%%%%%%%%%%%%%%%%%%%%%%%%%%%%%%%%%%%%%%%%%%%%%%%%

\subsection{Organization and Notation.}\

The organization of the paper is as follows: In Section \ref{sec:results}, we give our main result comparing the variables \eqref{eqn:intro_variables} and their Gaussian counterparts, as well as some applications, and a sketch of the proof. The proof relies heavily on the key technical Proposition \ref{prop:undoing_gram_schmidt}, which essentially asserts that, for the purpose of our observable, applying Gram--Schmidt to Gaussian vectors is little more than rescaling; the proof of this result constitutes Section \ref{sec:undoing_gram_schmidt}. Finally, the applications of our main result to fixed-rank and diverging-rank problems rely on Gaussian computations, which are carried out in Sections \ref{sec:gaussian_fixed} and \ref{sec:gaussian_diverging}, respectively.  

Throughout the paper, we use the following notation. 
We write $\|\mathbf{v}\|$ for the Euclidean norm of a vector $\mathbf{v}$ (always writing vectors in boldface) and $\|M\|$ for the corresponding operator norm of a matrix $M$. If $M$ has size $a \times b$, we write
\[
	\vertiii{M} \defeq \max_{i=1}^a \max_{j=1}^b \abs{M_{ij}}
\] 
for the entrywise maximum norm (in particular, we can take $\vertiii{\cdot}$ of a vector $\mathbf{v}$). The standard notation $X \overset{d}{=} Y$ means that the random variables $X$ and $Y$ have the same distribution, and $a_N = \OO(b_N)$ means that there is some constant $C$ such that the sequences $(a_N)_{N=1}^\infty, (b_N)_{N=1}^\infty$ satisfy $a_N \leq C b_N$. The notation $\diag(m_1, \ldots, m_N)$ means the diagonal matrix with entries $m_1, \ldots, m_N$ along the diagonal.

%%%%%%%%%%%%%%%%%%%%%%%%%%%%%%%%%%%%%%%%%%%%%%%%%%%%%%%%%%%%
%%%%%%%%             Subsection: Acknowledgements
%%%%%%%%%%%%%%%%%%%%%%%%%%%%%%%%%%%%%%%%%%%%%%%%%%%%%%%%%%%%

\subsection*{Acknowledgements.}\

LE was supported by the ERC Advanced Grant ``RMTBeyond'' No. 101020331. BM was supported by Fulbright Austria and the Austrian Marshall Plan Foundation.

%%%%%%%%%%%%%%%%%%%%%%%%%%%%%%%%%%%%%%%%%%%%%%%%%%%%%%%%%%%%
%%%%%%%%%%%%%%%%%%%%%%%%%%%%%%%%%%%%%%%%%%%%%%%%%%%%%%%%%%%%
%%%%%%%%
%%%%%%%%             Section: Results
%%%%%%%%
%%%%%%%%%%%%%%%%%%%%%%%%%%%%%%%%%%%%%%%%%%%%%%%%%%%%%%%%%%%%
%%%%%%%%%%%%%%%%%%%%%%%%%%%%%%%%%%%%%%%%%%%%%%%%%%%%%%%%%%%%

\section{Results}
\label{sec:results}

%%%%%%%%%%%%%%%%%%%%%%%%%%%%%%%%%%%%%%%%%%%%%%%%%%%%%%%%%%%%
%%%%%%%%             Subsection: Main result
%%%%%%%%%%%%%%%%%%%%%%%%%%%%%%%%%%%%%%%%%%%%%%%%%%%%%%%%%%%%

\subsection{Main result.}\

Our main result states that the extremal statistics of quadratic forms of GOE eigenvectors 
coincide with those of independent Gaussian vectors as long as the rank of $A_N$ is much smaller than $N^{1/2}$. 
To formulate it precisely, 
let $(\mathbf{u}_i)_{i=1}^N$ be the $\ell^2$-normalized eigenvectors of an $N \times N$ GOE matrix, and let $(\mathbf{y}_i)_{i=1}^N$ be i.i.d. standard Gaussian vectors of length $N$, meaning that $\mathbf{y}_i \sim \mc{N}(0,\Id_{N \times N})$ for each $i$ (so that $\|\mathbf{y}_i\|^2 \approx N$).

\begin{thm}
\label{thm:main}
Let $(A_N)_{N=1}^\infty$ be a sequence of real symmetric deterministic matrices such that, for some $\delta > 0$, 
\begin{equation}
\label{eqn:main_thm_rank}
	k_N \defeq \rank(A_N) = \OO(N^{1/2-\delta})
\end{equation}
and
\[
	\sup_N \|A_N\| < \infty.
\]
\begin{enumerate}
\item If the deterministic real sequences $(c_N)_{N=1}^\infty$ and $(d_N)_{N=1}^\infty$, and the probability distribution $\chi$, are such that
\[
	c_N \max_{i=1}^N \ip{\mathbf{y}_i, A_N \mathbf{y}_i} + d_N \overset{N \to \infty}{\to} \chi \qquad \text{in distribution}
\]
and
\[
	\sup_N \abs{c_N} < \infty,
\]
then
\[
	c_N \max_{i=1}^N N \ip{\mathbf{u}_i, A_N \mathbf{u}_i} + d_N \overset{N \to \infty}{\to} \chi \qquad \text{in distribution.}
\]
\item If the deterministic real sequences $(c'_N)_{N=1}^\infty$ and $(d'_N)_{N=1}^\infty$, and the probability distribution $\chi'$, are such that
\[
	c'_N \max_{i=1}^N \abs{\ip{\mathbf{y}_i, A_N \mathbf{y}_i}} + d'_N \overset{N \to \infty}{\to} \chi' \qquad \text{in distribution}
\]
and
\[
	\sup_N \abs{c'_N} < \infty,
\]
then
\[
	c'_N \max_{i=1}^N N \abs{\ip{\mathbf{u}_i, A_N \mathbf{u}_i}} + d'_N \overset{N \to \infty}{\to} \chi' \qquad \text{in distribution.}
\]
\end{enumerate}
\end{thm}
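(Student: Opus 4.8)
The plan is to move the problem onto i.i.d. Gaussians via the Gram--Schmidt representation recalled in Section~\ref{sec:13}: on the same probability space as $(\mathbf y_i)_{i=1}^N$, realize the GOE eigenvector matrix as the Gram--Schmidt orthonormalization of the columns of $Y=(\mathbf y_1,\dots,\mathbf y_N)$, so that $\mathbf u_i=\mathbf u_i(\mathbf y_1,\dots,\mathbf y_N)$ deterministically. Since $\sup_N|c_N|<\infty$ (and likewise for $c'_N$), both parts of the theorem follow once we show that, under the affine normalization by $(c_N,d_N)$, the maxima $\max_i N\ip{\mathbf u_i,A_N\mathbf u_i}$ and $\max_i\ip{\mathbf y_i,A_N\mathbf y_i}$ converge to the same limit; the absolute-value statement is obtained the same way, using $\max_i\abs{\ip{\mathbf u_i,A_N\mathbf u_i}}=\max\big(\max_i\ip{\mathbf u_i,A_N\mathbf u_i},\,\max_i\ip{\mathbf u_i,-A_N\mathbf u_i}\big)$ and running the comparison for $A_N$ and $-A_N$ together. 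This comparison is exactly the content of the key technical Proposition~\ref{prop:undoing_gram_schmidt}, so the real work lies there; the rest of this sketch indicates how I would establish it.

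For the Proposition I would first invoke orthogonal invariance to assume $A_N=\diag(a_1,\dots,a_{k_N},0,\dots,0)$: conjugating $A_N$ by a deterministic orthogonal matrix rotates the $\mathbf u_i$ and leaves the law of the $\mathbf y_i$ intact. Then $\ip{\mathbf u_i,A_N\mathbf u_i}=\ip{P\mathbf u_i,A_NP\mathbf u_i}$ with $P$ the projection onto the first $k_N$ coordinates, so the observable sees only the top $k_N\times N$ corner of the Haar matrix. Writing the Gram--Schmidt step as $\mathbf u_i=\mathbf g_i/\|\mathbf g_i\|$ with $\mathbf g_i=(\Id-\Pi_{i-1})\mathbf y_i$ and $\Pi_{i-1}$ the orthogonal projection onto $\mathbf u_1,\dots,\mathbf u_{i-1}$, one has the exact identity
\[
	N\ip{\mathbf u_i,A_N\mathbf u_i}=\frac{N}{\|\mathbf g_i\|^2}\,\ip{\mathbf g_i,A_N\mathbf g_i}.
\]
Conditionally on $\mathbf y_1,\dots,\mathbf y_{i-1}$, the vector $\mathbf g_i$ is a standard Gaussian on the $(N-i+1)$-dimensional subspace orthogonal to $\mathbf u_1,\dots,\mathbf u_{i-1}$, so $\ip{\mathbf g_i,A_N\mathbf g_i}$ is a Gaussian quadratic form which, once multiplied by $N/\|\mathbf g_i\|^2\approx N/(N-i+1)$, becomes distributionally a tiny perturbation of $\ip{\mathbf y_i,A_N\mathbf y_i}$ --- this is the precise sense in which Gram--Schmidt is ``little more than rescaling''. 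The discrepancy is controlled by concentration estimates, uniform in $i\le(1-\epsilon)N$, for $\|\mathbf g_i\|^2/(N-i+1)$, for $\|P\Pi_{i-1}\mathbf y_i\|$, and for $\Tr(A_N\Pi_{i-1})-\tfrac{i-1}{N}\Tr A_N$; the interaction between the $O(\sqrt N)$-scale fluctuation of $\|\mathbf y_i\|^2$ and the $O(k_N)$-scale quadratic form produces a correction of order $k_N/\sqrt N$, which is exactly why the rank restriction $k_N\ll\sqrt N$ appears.

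With these marginal estimates in hand, one passes from marginals to maxima using weak dependence: conditionally on its past, each $N\ip{\mathbf u_i,A_N\mathbf u_i}$ is a fresh (rescaled) Gaussian quadratic form, so this near-independence together with the matched marginals sandwiches $\max_i N\ip{\mathbf u_i,A_N\mathbf u_i}$ between the maxima of two i.i.d.\ families that differ from $(\ip{\mathbf y_i,A_N\mathbf y_i})_i$ only by a perturbation negligible at the extremal scale. The columns $i>(1-\epsilon)N$, where the rescaling factor $N/\|\mathbf g_i\|^2$ degenerates, are handled by a separate crude union bound showing they cannot realize the global maximum.

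The main obstacle is precisely this last point turned quantitative: each Gram--Schmidt correction is individually tiny, but there are $N$ columns, and a blunt union bound over all of them is too lossy --- this is what restricts the shortcut through Jiang's minor-comparison results to $\rank(A_N)\ll N^{1/3}$ (see Remark~\ref{rem:jiang}). Reaching the optimal exponent $1/2$ requires exploiting that the maximum is attained at a single, typical column, where the relevant correction is only of size $k_N/\sqrt N=o(1)$ exactly when $k_N\ll\sqrt N$; propagating the conditional estimates uniformly through the entire Gram--Schmidt filtration while tracking their effect on the extremes is the technical heart of the argument, carried out in Section~\ref{sec:undoing_gram_schmidt}.
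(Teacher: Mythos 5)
Your reduction is not the paper's, and the difference matters. After diagonalizing $A_N$ you keep the maximum over the eigenvector index $i\in\{1,\dots,N\}$ and propose to compare, conditionally on $\mathbf{y}_1,\dots,\mathbf{y}_{i-1}$, the law of $N\ip{\mathbf{u}_i,A_N\mathbf{u}_i}=N\ip{\mathbf{g}_i,A_N\mathbf{g}_i}/\|\mathbf{g}_i\|^2$ with that of $\ip{\mathbf{y}_i,A_N\mathbf{y}_i}$, for \emph{all} $N$ Gram--Schmidt steps. The paper's proof instead performs the row--column swap $\max_{i=1}^N\sum_{j=1}^k a_j(\sqrt{N}\gamma_{ij})^2\overset{d}{=}\max_{j=1}^N\sum_{i=1}^k a_i(\sqrt{N}\gamma_{ij})^2$, flagged as ``very important'': it confines all Gram--Schmidt corrections to the \emph{first} $k\ll\sqrt{N}$ vectors (where $\bm{\gamma}_i\approx\mathbf{y}_i/\sqrt{N}$), and the pathwise comparison of maxima is then proved by replacing $\sqrt{N}\gamma_{ij}-y_{ij}$ by $\widetilde{\Delta}_{ij}$ and exploiting cancellations in $\sum_{i=1}^k a_i\widetilde{\Delta}_{ij}y_{ij}$ via the graphical expansion (Lemma \ref{lem:graphs}, Proposition \ref{prop:cumulant_expansion}). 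Your route foregoes the swap, so it must control the effect of Gram--Schmidt on late vectors ($i$ comparable to $N$), where $\bm{\gamma}_i$ is nowhere near $\mathbf{y}_i/\sqrt{N}$ and the comparison can only be distributional, through the random compression $P_{V_i}A_NP_{V_i}$ whose spectrum and trace depend on the entire past. That is a genuinely different strategy, and its two load-bearing steps are only asserted: (a) a uniform-in-$i$ matching of the conditional upper tail at the extremal level $t\asymp\log N$ (where a relative error $\eta$ in the top eigenvalue or in the $\chi^2$ normalization distorts the tail by $e^{\OO(\eta t)}$, so the claimed $k_N/\sqrt{N}$ error must be proved with high probability uniformly along the filtration, not just heuristically); and (b) the passage from matched conditional marginals to a sandwich of the maxima. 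Your closing deferral to Section \ref{sec:undoing_gram_schmidt} does not close this gap, because that section proves the swapped statement, not the conditional comparison over the eigenvector index that your reduction requires; indeed the paper never establishes any comparison of that unswapped type.

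There is also a concrete false step: the claim that the columns $i>(1-\epsilon)N$ ``cannot realize the global maximum.'' The columns of a Haar-orthogonal matrix are exchangeable, so every $\ip{\mathbf{u}_i,A_N\mathbf{u}_i}$ has the same marginal law and the last $\epsilon N$ columns carry the argmax with probability approximately $\epsilon$; no union bound can exclude them. At best you could run the comparison on the first $(1-\epsilon)N$ columns, show the remaining ones only shift the Gumbel location by $\OO(\epsilon)$-type errors, and take $\epsilon\to0$ at the end --- but that still requires tail control for the late columns at the extremal scale, precisely where the conditional dimension $N-i+1$ degenerates and your ``tiny perturbation'' estimate breaks down. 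In short, the overall architecture (coupling via Gram--Schmidt, diagonalization, reduction to a comparison of maxima) matches the paper, but the key mechanism that earns the exponent $1/2$ --- the swap plus the cancellation/diagrammatic analysis, or any substitute for it in your conditional framework --- is missing, so the proposal as written does not prove Theorem \ref{thm:main}.
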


\begin{rem}
Our result is stated for the real case, but the same result holds for the complex case, meaning when the $(\mathbf{u}_i)_{i=1}^N$ are eigenvectors of the Gaussian \emph{Unitary} Ensemble (GUE), 
 the $(\mathbf{y}_i)_{i=1}^N$ are complex Gaussian vectors normalized so that $\|\mathbf{y}_i\|^2 \approx N$, and the $A_N$ are complex Hermitian matrices.
  For simplicity we write the proof only for the real case,
as the argument for the complex case is exactly the same with 
some natural minor adjustments.
\end{rem}

\begin{rem}
\label{rem:jiang_relationship}
We comment on the relationship between our work and that of Jiang \cite{Jia2005, Jia2006}, which is a very important precedent for us. 
Our paper and his consider the same underlying objects 
(what we will call $\sqrt{N}\gamma_{ij} - y_{ij}$ in, e.g., \eqref{eqn:for_jiang_remark}) 
to compare the outcome of a Gram--Schmidt orthonormalization with independent Gaussian.
 His work gives essentially tight estimates on their size. However, for the purpose of the extremal statistics
 our result boils down to estimating a certain sum of products of these objects. 
 One could estimate this sum by the absolute value of each summand  using Jiang's estimates; this would lead to our result but with the tighter and non-optimal
  rank restriction 
\[
	k_N = \OO(N^{1/3-\delta}).
\]
The details are given in Remark \ref{rem:jiang} below. To reach the exponent almost $1/2$, which we believe is a threshold (see Remark \ref{rem:rank}), one has to estimate the cancellations between terms in this sum; this is substantially more complicated than applying
 the term by term  estimates and it requires a quite different method. Our proof still uses some results from \cite{Jia2005}, namely in Proposition \ref{prop:jiang_bounds} and its corollary Lemma \ref{lem:delta_replacement_good_event} below. These are borrowed for convenience: by replacing the good event there with a high-moment expansion, like we give in the proof of Lemma \ref{lem:graphs}, we could have written a formally independent proof.
\end{rem}

\begin{rem}
\label{rem:rank}
Of course, one wonders to what extent the restriction \eqref{eqn:main_thm_rank} is tight for Theorem~\ref{thm:main} to hold.
 \emph{Some} restriction on $A_N$ is necessary: If we take $A_N = \Id = \Id_{N \times N}$, then $\max_{i=1}^N \ip{\mathbf{y}_i, \mathbf{y}_i}$ is the maximum of independent chi-squared random variables, so has Gumbel fluctuations, but $\max_{i=1}^N \ip{\mathbf{u}_i, \mathbf{u}_i} = 1$ deterministically.

If extremal quadratic forms of GOE eigenvectors cease to have Gumbel fluctuations as the rank of $A_N$ increases,
then the relationship between this phase transition and our result may be subtle, since our result relies on a comparison
between the GOE eigenvectors $(\mathbf{u}_i)_{i=1}^N$ and the \emph{independent} Gaussian vectors $(\mathbf{y}_i)_{i=1}^N$.
For simplicity, consider the case when $A_N = \diag(1,\ldots,1,0,\ldots,0)$ with $k_N = N^\alpha$ for some $\alpha \in (0,1)$. It may happen that there are two critical thresholds, $\alpha_{\textup{i.i.d.}}$ and $\alpha_{\textup{Gumbel}}$, such that 
\begin{enumerate}[label=(\roman*)]
\item $\max_{i=1}^N N \ip{\mathbf{u}_i, A_N\mathbf{u}_i}$ 
is well approximated by the independent-Gaussian analogue
$\max_{i=1}^N \ip{\mathbf{y}_i, A_N\mathbf{y}_i}$ for $\alpha < \alpha_{\textup{i.i.d.}}$ but not beyond,
\item and $\max_{i=1}^N N \ip{\mathbf{u}_i, A_N\mathbf{u}_i}$ has Gumbel fluctuations for $\alpha < \alpha_{\textup{Gumbel}}$ (these two already imply $\alpha_{\textup{i.i.d.}} \leq \alpha_{\textup{Gumbel}}$), 
\item but $\alpha_{\textup{i.i.d.}} < \alpha_{\textup{Gumbel}}$ with strict inequality.
\end{enumerate}

If such thresholds exist, we believe that probably $\alpha_{\textup{i.i.d.}} = 1/2$, but we leave open the value of $\alpha_{\textup{Gumbel}}$. It may happen that, for some $\alpha > \alpha_{\textup{i.i.d.}}$ (possibly even for any $\alpha<1$), one still has Gumbel limiting distribution,
but for the correct Gaussian approximation
one should take the vectors $\mathbf{y}_i$ to be \emph{appropriately correlated} Gaussians.
\end{rem}

\begin{rem}
\label{rem:universality}
It would be interesting to study \emph{universality}, i.e., to see whether Theorem~\ref{thm:main} is true when the $\mathbf{u}_i$'s are the
 eigenvectors of a non-invariant ensemble, such as generic Wigner matrices.
At the moment, the main difficulty is that existing universality proofs relying on the Dyson Brownian motion typically consider only finitely many eigenvectors simultaneously (see \cite{MarYau2020} for the strongest result to date), while $\max_{i=1}^N \ip{\mathbf{u}_i, A_N\mathbf{u}_i}$ involves \emph{all} eigenvectors at the same time.
\end{rem}

The value of  Theorem~\ref{thm:main} is that it reduces the study of $\max_{i=1}^N \ip{\mathbf{u}_i, A_N \mathbf{u}_i}$ and $\max_{i=1}^N \abs{\ip{\mathbf{u}_i, A_N \mathbf{u}_i}}$ to the corresponding Gaussian problems. The latter are easier, since they are of the form $\max_{i=1}^N f_{A_N}(\mathbf{y}_i)$ for some deterministic functions $f_{A_N}(\cdot)$, and especially since the variables $f_{A_N}(\mathbf{y}_i)$ are i.i.d. The following subsection gives several illustrative examples of how this theorem can be used in practice.

%%%%%%%%%%%%%%%%%%%%%%%%%%%%%%%%%%%%%%%%%%%%%%%%%%%%%%%%%%%%
%%%%%%%%             Subsection: Corollaries and physical application
%%%%%%%%%%%%%%%%%%%%%%%%%%%%%%%%%%%%%%%%%%%%%%%%%%%%%%%%%%%%

\subsection{Corollaries and physical application.}\

To illustrate the use of Theorem~\ref{thm:main}, we prove limit laws for $\max_{i=1}^N \ip{\mathbf{u}_i, A_N \mathbf{u}_i}$ and $\max_{i=1}^N \abs{\ip{\mathbf{u}_i, A_N \mathbf{u}_i}}$ for two special cases of $A_N$'s  via performing
the corresponding Gaussian calculation essentially explicitly. 
Theorem \ref{thm:main_fixed}, with proof in Section \ref{sec:gaussian_fixed}, completely characterizes the case when the matrices $A_N$ have a fixed rank and fixed eigenvalues; Theorem \ref{thm:main_diverging}, with proof in Section \ref{sec:gaussian_diverging}, considers the special case of diverging rank when $A_N = \diag(1,\ldots,1,0,\ldots,0)$ and $\rank(A_N) \approx N^\alpha$ for some $0 < \alpha < 1/2$. 
Similar calculations for more general observables with diverging ranks can also be carried out, but for the sake of brevity we refrain from doing so.

\begin{thm}
\label{thm:main_fixed}
Fix $k \in \N$ and nonzero real numbers $a_1, \ldots, a_k$. Set
\begin{equation}
\label{eqn:defmult}
\begin{split}
	&a \defeq \max_{i=1}^k a_i, \qquad m \defeq \#\{i : a_i = a\}, \\
	&a_\ast \defeq \max_{i=1}^k \abs{a_i}, \qquad m_+ \defeq \#\{i : a_i = a_\ast\}, \qquad m_- \defeq \#\{i : a_i = -a_\ast\},
\end{split}
\end{equation}
as well as
\begin{align}
	c_m(a_1, \ldots, a_k) &\defeq \log\left(\Gamma\left(\frac{m}{2}\right) \sqrt{\prod_{\substack{j \in \llbracket 1, k \rrbracket \\ a_j \neq a}} \left(1-\frac{a_j}{a}\right)} \right), \label{eqn:defc} \\
	c^\ast_{m_+,m_-}(a_1, \ldots, a_k) &\defeq \begin{cases} \log\left( \Gamma \left(\frac{m_+}{2}\right) \sqrt{\prod_{\substack{j \in \llbracket 1, k \rrbracket \\ a_j \neq a_\ast}} \left(1-\frac{a_j}{a_\ast}\right)} \right) & \text{if } m_+ > m_- \\ 
	\log\left( \Gamma \left(\frac{m_-}{2}\right) \sqrt{\prod_{\substack{j \in \llbracket 1, k \rrbracket \\ a_j \neq -a_\ast}} \left(1+\frac{a_j}{a_\ast}\right)} \right) & \text{if } m_+ < m_- \\
	\log\left( \Gamma \left(\frac{m_+}{2}\right) \left[ \left( \sqrt{\prod_{\substack{j \in \llbracket 1, k \rrbracket \\ a_j \neq a_\ast}} \left(1-\frac{a_j}{a_\ast}\right)} \right)^{-1} + \left(\sqrt{\prod_{\substack{j \in \llbracket 1, k \rrbracket \\ a_j \neq -a_\ast}} \left(1+\frac{a_j}{a_\ast}\right)} \right)^{-1} \right]^{-1} \right) & \text{if } m_+ = m_- \end{cases} \label{eqn:defcbar}
\end{align}
with the empty product interpreted as one as usual. Write $\Lambda$ for a Gumbel-distributed random variable, with distribution function $\P(\Lambda \leq x) = \exp(-e^{-x})$, and $\Psi_{k/2}$ for a $\frac{k}{2}$-Weibull-distributed random variable, with distribution function $\P(\Psi_{k/2} \leq x) = \max(\exp(-(-x)^{k/2}),1)$. Suppose that
\[
	\Spec(A_N) = \{a_1, a_2, \ldots, a_k, 0, \ldots, 0\}.
\]
\begin{enumerate}
\item Suppose that $a > 0$. Then we have
\begin{equation}
\label{eqn:main_apos_noabs}
	\frac{N}{2a} \max_{i=1}^N \ip{\mathbf{u}_i,A_N \mathbf{u}_i} - \log N + \left(1-\frac{m}{2}\right) \log \log N + c_m(a_1, \ldots, a_k) \overset{N \to \infty}{\to} \Lambda \quad \text{in distribution}.
\end{equation}
and
\begin{equation}
\label{eqn:main_apos_abs}
	\frac{N}{2a_\ast} \max_{i=1}^N \abs{\ip{\mathbf{u}_i,A_N \mathbf{u}_i}} - \log N + \left(1-\frac{\max(m_+,m_-)}{2}\right) \log\log N + c^\ast_{m_+,m_-}(a_1, \ldots, a_k) \overset{N \to \infty}{\to} \Lambda \quad \text{in distribution}.
\end{equation}
\item Suppose that $a < 0$. Writing $\gamma_k = \frac{1}{2}\left(\frac{2}{k\Gamma(k/2)}\right)^{2/k}$, we have
\begin{equation}
\label{eqn:main_aneg}
	\frac{\gamma_k N^{1+\frac{2}{k}}}{\left( \prod_{k=1}^k \abs{a_j} \right)^{1/k}} \max_{i=1}^N \ip{\mathbf{u}_i,A_N\mathbf{u}_i} \overset{N \to \infty}{\to} \Psi_{k/2} \quad \text{in distribution}.
\end{equation}
(If $a < 0$, then $\max_{i=1}^N \abs{\ip{\mathbf{u}_i, A_N \mathbf{u}_i}} = \max_{i=1}^N \ip{\mathbf{u}_i, (-A_N) \mathbf{u}_i}$, since $-A_N$ is positive semidefinite; thus this ``fourth case'' can be treated with \eqref{eqn:main_apos_noabs}.)
\end{enumerate}
\end{thm}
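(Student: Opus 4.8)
The plan is a two-step reduction: first pass to i.i.d.\ Gaussians via Theorem~\ref{thm:main}, then carry out the classical extreme-value computation for a generalized $\chi^2$.

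\textbf{Reduction to i.i.d.\ Gaussians.} Writing $A_N$ in its spectral decomposition and using rotational invariance of $\mathbf{y}_i\sim\mc N(0,\Id_{N\times N})$, one has $\ip{\mathbf{y}_i,A_N\mathbf{y}_i}\overset{d}{=}X_i:=\sum_{j=1}^k a_jZ_{ij}^2$ with the $Z_{ij}$ i.i.d.\ standard Gaussian, so the $X_i$ are i.i.d.\ copies of a fixed, $N$-independent generalized $\chi^2$ variable $X:=\sum_{j=1}^k a_j Z_j^2$. Theorem~\ref{thm:main} then transfers any limit law for $\max_i X_i$ (resp.\ $\max_i\abs{X_i}$), suitably normalized, to the same limit law for $\max_i N\ip{\mathbf{u}_i,A_N\mathbf{u}_i}$ (resp.\ $\max_i N\abs{\ip{\mathbf{u}_i,A_N\mathbf{u}_i}}$). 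So the problem becomes a textbook extreme-value computation for i.i.d.\ variables, and everything reduces to the tail asymptotics of $X$ and of $\abs X$. (In the Weibull case $a<0$ below the natural normalization of $\max_i X_i$ diverges, so Theorem~\ref{thm:main} does not apply verbatim; but the reduction still works, since the underlying Gram--Schmidt comparison of Proposition~\ref{prop:undoing_gram_schmidt} is essentially multiplicative and the relevant quantity is itself of vanishing order, so a minor variant suffices.)

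\textbf{Case $a>0$ (Gumbel).} Split $X=aW+Y$, where $W\sim\chi^2_m$ collects the coordinates with $a_j=a$ and $Y:=\sum_{a_j\neq a}a_jZ_j^2$ is independent of $W$; since $a$ is the strict maximum, the exponential tilt $\E[e^{Y/(2a)}]=\prod_{a_j\neq a}(1-a_j/a)^{-1/2}$ is finite. Conditioning on $Y$ and plugging in the exact $\chi^2_m$ tail yields
\[
	\P(X>t)\sim\frac{1}{\Gamma(m/2)\sqrt{\prod_{a_j\neq a}(1-a_j/a)}}\Big(\frac{t}{2a}\Big)^{m/2-1}e^{-t/(2a)},\qquad t\to\infty.
\]
Solving $N\,\P(X>u_N)=1$ for $u_N$ produces the $\log N$, the $(1-\tfrac m2)\log\log N$, and the constant $c_m$ of \eqref{eqn:defc}; the exponential shape of the tail upgrades this to a Gumbel limit, and dividing by $2a$ and converting back via Theorem~\ref{thm:main} gives \eqref{eqn:main_apos_noabs}. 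For $\max_i\abs{X_i}$ I combine the two one-sided tails: $\P(X>t)$ decays at rate $1/(2a_\ast)$ precisely when $a_\ast$ is a positive eigenvalue (otherwise it is subleading), and $\P(-X>t)$ decays at rate $1/(2a_\ast)$ precisely when $-a_\ast$ is an eigenvalue; the side with the larger polynomial power $t^{m_\pm/2-1}$ dominates, and when $m_+=m_-$ the two prefactors add — this is the source of the three branches of \eqref{eqn:defcbar} and of the $\max(m_+,m_-)$ in \eqref{eqn:main_apos_abs}.

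\textbf{Case $a<0$ (Weibull).} Now $X<0$ almost surely with essential supremum $0$, so $\max_i X_i$ falls in the Weibull domain of attraction and I need the behavior of $\P(X>-\epsilon)=\P\big(\sum_{j=1}^k\abs{a_j}Z_j^2<\epsilon\big)$ as $\epsilon\downarrow 0$. From $\E\big[e^{-\lambda\sum_j\abs{a_j}Z_j^2}\big]=\prod_j(1+2\lambda\abs{a_j})^{-1/2}\sim(2\lambda)^{-k/2}\prod_j\abs{a_j}^{-1/2}$ as $\lambda\to\infty$, a Tauberian theorem gives $\P(X>-\epsilon)\sim\epsilon^{k/2}\big/\big(2^{k/2}\Gamma(\tfrac k2+1)\prod_j\sqrt{\abs{a_j}}\big)$; then $\P(\max_i X_i\le-\epsilon)=(1-\P(X>-\epsilon))^N$ with $\epsilon$ of order $N^{-2/k}$ produces the $\tfrac k2$-Weibull limit, and unwinding the constant (with $\gamma_k=\tfrac12(2/(k\Gamma(k/2)))^{2/k}$) gives exactly the normalization in \eqref{eqn:main_aneg}. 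The companion statement for $\max_i\abs{\ip{\mathbf{u}_i,A_N\mathbf{u}_i}}$ when $a<0$ is not separate: $-A_N$ is positive semidefinite, so it is an instance of \eqref{eqn:main_apos_noabs} applied to $-A_N$.

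I expect the main obstacle to be the constant bookkeeping in the Gumbel case — carrying the multiplicity-induced polynomial prefactor $t^{m/2-1}$ and the tilting factor $\prod_{a_j\neq a}(1-a_j/a)^{-1/2}$ all the way through the inversion of $N\,\P(X>u_N)=1$, and handling the tie $m_+=m_-$ of the absolute-value case correctly. By comparison the Weibull Tauberian estimate and the mild extension of Theorem~\ref{thm:main} to a diverging normalization are routine.
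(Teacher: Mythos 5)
Your Gumbel and Weibull limit computations for the i.i.d.\ Gaussian model are correct, and they take a genuinely different route from the paper's. For the tails of $X=\sum_j a_jZ_j^2$ you split off the top $\chi^2_m$ block and use the exponential tilt $\E[e^{Y/(2a)}]=\prod_{a_j\neq a}(1-a_j/a)^{-1/2}$, then invert $N\,\P(X>u_N)\to e^{-x}$; the paper (Lemmas \ref{lem:rankk_allpos} and \ref{lem:rankk_somepossomeneg}) instead passes to hyperspherical coordinates, runs a Laplace-method analysis of the angular integral (with extra care about the conic geometry when some $a_j<0$), and then invokes a tail-equivalence proposition from classical extreme-value theory. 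Your conditioning/tilting argument is more elementary, treats the mixed-sign case with no extra geometric work, and your prefactor matches the paper's exactly, including the three branches of \eqref{eqn:defcbar} (when $m_+=m_-$ the two one-sided prefactors add, as you say). Likewise your small-ball asymptotics via the Laplace transform and Karamata's Tauberian theorem reproduce the constant $\gamma_k$ of \eqref{eqn:main_aneg}; the paper gets the same constant by an explicit ellipsoid-volume identity (Lemma \ref{lem:calculusmiracle}). Both are fine; yours is arguably leaner.

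The genuine gap is the transfer step in the Weibull case. You correctly note that Theorem \ref{thm:main} does not apply because the normalization $N^{2/k}$ diverges, but the claim that ``a minor variant suffices'' because the comparison is ``essentially multiplicative'' is not accurate: the Gram--Schmidt error is dominated by the \emph{additive} corrections $\Delta_{ij}$, whose typical size is $N^{-1/2}$ up to logarithms, and the resulting cross terms $y_{ij}\Delta_{ij}$ are of order $N^{-1/2}$, which is \emph{not} $o(N^{-2/k})$ once $k\le 4$. What saves the argument is a localization at the minimizing index $j_\ast$: there all coordinates satisfy $\abs{y_{ij_\ast}}\lesssim N^{-1/k}$ (because the minimum itself is of order $N^{-2/k}$ and all $\abs{a_i}$ are bounded below), and a bootstrap through the triangular structure of Gram--Schmidt then gives $\abs{\Delta_{ij_\ast}}\lesssim N^{-\frac12-\frac1k}$, so the error at the argmin is $O(N^{-\frac12-\frac2k})=o(N^{-2/k})$; one must run this both for the argmin of the $y$-problem and of the $w$-problem to get a two-sided bound. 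This is exactly the content of the paper's Lemma \ref{lem:weibull} (via Lemma \ref{lem:weibull_hard_part}); only the final normalization $N/\|\mathbf{w}_i\|^2\to1$ is the easy ``multiplicative times vanishing order'' step (Lemma \ref{lem:weibull_easy_part}). Your proposal needs this argmin-localization argument spelled out; without it the reduction fails for small $k$ and is unjustified in general.
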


\begin{rem} Note that
the maximal statistics are all asymptotically Gumbel-distributed except for the case when all the $a_i$'s are negative, in which case they are asymptotically Weibull-distributed. This is not surprising, since the latter case is essentially the same as the \emph{minimum} of independent $\chi^2$ random variables. The latter are asymptotically Weibull-distributed, related to the fact that the minimum has no tails: it is deterministically nonnegative.
Also note that the  $N$-scaling in~\eqref{eqn:main_aneg} is different from~\eqref{eqn:main_apos_noabs} and \eqref{eqn:main_apos_abs}. This means that the Weibull results are not a direct corollary of Theorem \ref{thm:main} (roughly, they would require $c_N = N^{2/k}$ which is not bounded), but the elementary variant of Theorem \ref{thm:main} needed here is given in Section \ref{subsec:weibull}.
\end{rem}

\begin{rem}\label{physics}
The rank-one, complex-Hermitian special case of this result already appeared in the physics literature, in a dual form. Indeed, if $\mathbf{q}$ is a complex unit vector, $(\mathbf{u}_i)_{i=1}^N$ are GUE eigenvectors, and $\mathbf{e}_1 = (1,0,\ldots,0)$, then by rotation invariance 
\begin{align*}
	\max_{i=1}^N \ip{\mathbf{u}_i, \mathbf{q}\mathbf{q}^\ast \mathbf{u}_i} &\overset{d}{=} \max_{i=1}^N \ip{\mathbf{u}_i, \mathbf{e}_1\mathbf{e}_1^\ast \mathbf{u}_i} = \max_{i=1}^N \abs{\mathbf{u}_i(1)}^2 \overset{d}{=} \max_{i=1}^N \abs{\mathbf{u}_1(i)}^2, \\
	\max_{i=1}^N \ip{\mathbf{u}_i,(-\mathbf{q}\mathbf{q}^\ast) \mathbf{u}_i} &\overset{d}{=} - \min_{i=1}^N \abs{\mathbf{u}_1(i)}^2,
\end{align*}
i.e., we are just considering the largest and smallest squared components of a uniform random vector on the unit sphere. 
 For these quantities, Lakshminarayan, Tomsovic, Bohigas, and Majumdar \cite{LakTomBohMaj2008} rigorously showed that
\begin{align*}
	N \max_{i=1}^N \abs{\mathbf{u}_1(i)}^2 - \log N \overset{N \to \infty}{\to} \Lambda \quad \text{in distribution,} \\
	-N^2 \min_{i=1}^N \abs{\mathbf{u}_1(i)}^2 \overset{N \to \infty}{\to} \Psi_1 \quad \text{in distribution.}
\end{align*}
We mention that these formulas  look a bit different from Theorem~\ref{thm:main_fixed}, which is written for the real case, but they follow 
directly  from Theorem~\ref{thm:main} and easy classical computations for complex Gaussians.

The physical motivation of  the authors of \cite{LakTomBohMaj2008} was to detect quantum chaos via extremal statistics of eigenvectors. In 
particular, they performed extensive numerics for the eigenfunctions of the quantum kicked rotor model in the strongly chaotic parameter regime
and found Gumbel and Weibull distributions emerging.  As often happens in physics  of disordered quantum systems, 
they used GUE eigenvectors as phenomenological replacements for the actual eigenvectors and for this test case  they could actually 
prove the limiting behavior. Our result for higher rank observables $A_N$ may inspire an analogous numerical study for the 
kicked rotor with higher rank observables, but such investigation would go beyond the scope of this paper.
\end{rem}

\begin{thm}
\label{thm:main_diverging}
Fix $0 < \alpha < 1/2$, and let $(A_N)_{N=1}^\infty$ be any sequence of $N \times N$ matrices of the form
\[
	A_N = \diag(1,\ldots,1,0,\ldots,0)
\]
such that, for some $\epsilon > 0$, 
\[
	\abs{\rank(A_N) - N^\alpha} \leq N^{\frac{\alpha}{2}-\epsilon}.
\]
Then
\begin{equation}
\label{eqn:main_diverging}
	\left(\frac{\sqrt{\log N}}{N^{\alpha/2}}\right) \max_{i=1}^N N \ip{\mathbf{u}_i, A_N \mathbf{u}_i} - N^{\alpha/2}\sqrt{\log N} - 2\log N + \frac{\log \log N}{2} + \frac{\log (4\pi)}{2} \overset{N \to \infty}{\to} \Lambda \quad \text{in distribution.}
\end{equation}
\end{thm}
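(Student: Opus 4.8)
The plan is to use Theorem~\ref{thm:main} to replace the GOE eigenvectors by i.i.d.\ Gaussians, and then to run an essentially classical extreme-value computation for the maximum of $N$ i.i.d.\ chi-squared variables whose number of degrees of freedom diverges with $N$. The hypotheses of Theorem~\ref{thm:main}(1) are met here: $\|A_N\| = 1$, and $\rank(A_N) \le N^\alpha + N^{\alpha/2-\epsilon} \le 2N^\alpha = \OO(N^{1/2-\delta})$ for $\delta := (1/2-\alpha)/2 > 0$, since $\alpha < 1/2$. With $\mathbf{y}_i \sim \mc{N}(0,\Id)$ i.i.d., the quadratic form $Z_i := \ip{\mathbf{y}_i, A_N\mathbf{y}_i} = \sum_{j=1}^{k_N} y_{ij}^2$ is a $\chi^2$ variable with $k_N := \rank(A_N)$ degrees of freedom, and the $Z_i$ are i.i.d. Writing $c_N := \sqrt{\log N}/N^{\alpha/2}$ (which is bounded, indeed $\to 0$) and $d_N$ for the remaining centering in~\eqref{eqn:main_diverging}, Theorem~\ref{thm:main}(1) reduces the statement to proving $c_N \max_{i=1}^N Z_i + d_N \to \Lambda$ in distribution.

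By the classical Poisson approximation for maxima of i.i.d.\ variables with continuous distribution function, it suffices to show, for each fixed $s \in \R$, that $N\,\P(Z_1 > \tau_N(s)) \to e^{-s}$ with $\tau_N(s) := k_N + \sqrt{2k_N}\,v_N(s)$ and $v_N(s) := \sqrt{2\log N} - \tfrac{\log\log N + \log(4\pi)}{2\sqrt{2\log N}} + \tfrac{s}{\sqrt{2\log N}}$ the classical Gaussian Gumbel normalization, for which $N\bar\Phi(v_N(s)) \to e^{-s}$ ($\bar\Phi$ being the standard normal survival function); then $\P(\max_i Z_i \le \tau_N(s)) = (1 - \P(Z_1 > \tau_N(s)))^N \to e^{-e^{-s}}$, and rearranging the event $\{\max_i Z_i \le \tau_N(s)\}$ gives
\[
	\frac{\sqrt{\log N}}{\sqrt{k_N}}\max_{i=1}^N Z_i - \sqrt{k_N\log N} - 2\log N + \frac{\log\log N}{2} + \frac{\log(4\pi)}{2} \to \Lambda \quad \text{in distribution.}
\]
The crux is the uniform tail estimate $\P(Z_1 > t) = \bar\Phi\big(\tfrac{t-k_N}{\sqrt{2k_N}}\big)(1+\oo(1))$, valid for $t$ with $(t-k_N)/\sqrt{2k_N} = \sqrt{2\log N} + \OO(\log\log N/\sqrt{\log N})$. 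I would prove it by writing $\P(Z_1 > t) = \Gamma(k_N/2, t/2)/\Gamma(k_N/2)$ via the incomplete gamma function, changing variables via $u = k_N/2 + \sqrt{k_N/2}\,w$ in the integral, Taylor-expanding the exponent $(k_N/2 - 1)\log u - u$ about $u = k_N/2$ to second order (producing a Gaussian integral) and using Stirling for $\Gamma(k_N/2)$; on the scale $w \asymp \sqrt{\log N}$ the cubic and higher Taylor remainders are of size $\OO((\log N)^{3/2}/\sqrt{k_N}) = \OO((\log N)^{3/2}/N^{\alpha/2}) \to 0$ (after truncating the $w$-integral at, say, $w = k_N^{1/10}$, beyond which the Gaussian weight is negligibly small), so we are safely inside the moderate-deviation regime. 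Substituting $t = \tau_N(s)$ then gives $N\,\P(Z_1 > \tau_N(s)) = N\bar\Phi(v_N(s))(1+\oo(1)) \to e^{-s}$.

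It remains to pass from $k_N$ to $N^\alpha$ in the normalization. Using $|k_N - N^\alpha| \le N^{\alpha/2-\epsilon}$ one checks $\sqrt{k_N\log N} - N^{\alpha/2}\sqrt{\log N} = \OO(\sqrt{\log N}\,N^{-\epsilon}) \to 0$ and $\sqrt{\log N}/\sqrt{k_N} - \sqrt{\log N}/N^{\alpha/2} = \OO(\sqrt{\log N}\,N^{-\alpha-\epsilon})$; since the display above forces $\max_i Z_i / N^\alpha \to 1$ in probability, multiplying shows that the difference between the $k_N$-normalized quantity and $c_N\max_i Z_i + d_N$ tends to $0$ in probability. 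Slutsky's theorem then yields $c_N\max_i Z_i + d_N \to \Lambda$, and Theorem~\ref{thm:main}(1) upgrades this to~\eqref{eqn:main_diverging}.

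The main obstacle is the uniform tail estimate for $\chi^2_{k_N}$: it has to be sharp enough that the $\tfrac12\log\log N$ and $\tfrac12\log(4\pi)$ constants in~\eqref{eqn:main_diverging} appear correctly — equivalently, $\P(Z_1 > \tau_N(s))$ must be controlled to relative precision $1 + \oo(1)$ after multiplication by $N$ — and it must hold uniformly over all admissible sequences $k_N$. This is precisely where the Laplace-type expansion and the careful bookkeeping of Taylor remainders do the work; the one smallness that matters is $(\log N)^{3/2} = \oo(\sqrt{k_N})$, guaranteed by $\alpha > 0$ alone, while the upper bound $\alpha < 1/2$ is needed only to invoke Theorem~\ref{thm:main}.
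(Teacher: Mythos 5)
Your proposal is correct, and it follows the paper's overall strategy — apply Theorem \ref{thm:main}(1) (your rank and boundedness checks match the paper's) and then do the extreme-value computation for i.i.d.\ $\chi^2_{k_N}$ variables — but the Gaussian computation itself is carried out differently. The paper (Lemma \ref{lem:gauss_diverging}) writes the tail as an incomplete-gamma ratio and imports Wimp's uniform asymptotic $\Gamma(n,nt)/\Gamma(n) \sim \tfrac{1}{\sqrt 2}\tfrac{\mu(t)}{t-1}\erfc(\sqrt n\,\mu(t))$, valid uniformly in $t\ge 1$, and then Taylor-expands $\mu$ near $t=1$, carrying the $N^{\alpha/2}$ normalization (and the $k_N$ vs.\ $N^\alpha$ discrepancy) inside that single expansion. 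You instead prove by hand a moderate-deviation normal-tail approximation $\P(Z_1>t)=\bar\Phi\big(\tfrac{t-k_N}{\sqrt{2k_N}}\big)(1+\oo(1))$ via a Laplace expansion of the incomplete gamma integral, plug in the classical Gaussian Gumbel thresholds so that $N\bar\Phi(v_N(s))\to e^{-s}$, and then pass from the $\sqrt{k_N}$-based centering to the $N^{\alpha/2}$-based one by a separate Slutsky step; your error bookkeeping is right — the relevant smallness is $(\log N)^{3/2}/\sqrt{k_N}\to 0$, i.e.\ $\sqrt{\log N}=\oo(k_N^{1/6})$, which holds for any $\alpha>0$, and your $k_N\leftrightarrow N^\alpha$ estimates ($\sqrt{\log N}\,N^{-\epsilon}\to 0$ and $\max_i Z_i=\OO_P(N^\alpha)$) are exactly what is needed. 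What each route buys: yours is self-contained and makes transparent that the phenomenon is just the CLT in the moderate-deviation regime, at the cost of doing the Laplace/truncation bookkeeping explicitly — in particular, beyond your cutoff $w=k_N^{1/10}$ you should bound the genuine $\chi^2$ tail (e.g.\ by a Chernoff bound or the concavity of the exponent $(k_N/2-1)\log u - u$), not merely the Gaussian weight, noting $e^{-ck_N^{1/5}}\ll N^{-1}$; the paper's route gets uniformity in $t$ for free from the cited formula and avoids any truncation, at the cost of relying on that external asymptotic. Both yield the same constants in \eqref{eqn:main_diverging}.
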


%%%%%%%%%%%%%%%%%%%%%%%%%%%%%%%%%%%%%%%%%%%%%%%%%%%%%%%%%%%%
%%%%%%%%             Subsection: Main steps in proof
%%%%%%%%%%%%%%%%%%%%%%%%%%%%%%%%%%%%%%%%%%%%%%%%%%%%%%%%%%%%

\subsection{Main steps in the proof of Theorem \ref{thm:main}.}\

The proof of Theorem \ref{thm:main} goes by coupling. 
Namely, recalling that $(\mathbf{y}_i)_{i=1}^N$ are i.i.d. standard Gaussian vectors of length $N$ with entries $(\mathbf{y}_i)_j = y_{ij}$, we will need both their Gram--Schmidt orthogonalizations $(\mathbf{w}_i)_{i=1}^N$ (with entries $(\mathbf{w}_i)_j = w_{ij}$), defined by $\mathbf{w}_1 = \mathbf{y}_1$ and 
\[
	\mathbf{w}_i \defeq \mathbf{y}_i -  \sum_{j=1}^{i-1} \frac{\ip{\mathbf{y}_i,\mathbf{w}_j}}{\|\mathbf{w}_j\|^2} \mathbf{w}_j, \quad i = 2, \ldots, N,
\]
as well as their orthonormalizations 
\[
	\bm{\gamma}_i \defeq \frac{\mathbf{w}_i}{\|\mathbf{w}_i\|},
\]
with entries $(\bm{\gamma}_i)_j = \gamma_{ij}$.

The following simple lemma is classical and its proof was already explained in Section~\ref{sec:13}.
\begin{lem}
\label{lem:recognizing_gram_schmidt}
Let $(\mathbf{u}_i)_{i=1}^N$ be $\ell^2$-normalized GOE eigenvectors. Then 
\[
	(\mathbf{u}_1, \ldots, \mathbf{u}_N) \overset{d}{=} (\bm{\gamma}_1, \ldots, \bm{\gamma}_N).
\]
\end{lem}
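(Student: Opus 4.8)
The statement to prove is Lemma~\ref{lem:recognizing_gram_schmidt}: that the $\ell^2$-normalized GOE eigenvectors $(\mathbf{u}_1,\dots,\mathbf{u}_N)$ have the same joint law as the Gram--Schmidt orthonormalizations $(\bm{\gamma}_1,\dots,\bm{\gamma}_N)$ of i.i.d.\ standard Gaussian vectors. As the excerpt already notes in Section~\ref{sec:13}, the cleanest route is via the Haar measure on the orthogonal group $O(N)$, and I would simply make that sketch rigorous in three short steps.

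First I would assemble the eigenvectors into a random matrix $U = (\mathbf{u}_1 \mid \cdots \mid \mathbf{u}_N) \in O(N)$ (choosing signs/ordering by some measurable convention, say ordering eigenvalues increasingly and fixing the sign of the first nonzero coordinate of each eigenvector) and argue that $U$ is Haar distributed. The key input is that if $H \sim \mathrm{GOE}$ then $O H O^T \overset{d}{=} H$ for any fixed $O \in O(N)$; this follows from the fact that the GOE density is proportional to $\exp(-\tfrac{N}{4}\Tr H^2)$, which is invariant under $H \mapsto OHO^T$, together with invariance of the flat measure on symmetric matrices under this conjugation. Since the eigenvector matrix of $OHO^T$ is $O U$ (up to the same sign/ordering convention, which is equivariant under conjugation), one gets $OU \overset{d}{=} U$ for all fixed $O$, and left-invariance under the full group forces $U$ to be Haar distributed — this is the standard uniqueness argument, using that $O(N)$ is compact so Haar measure is the unique left-invariant probability measure. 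One technical point to dispatch is that GOE eigenvalues are a.s.\ simple, so the eigenvector matrix is well-defined up to the finite sign ambiguity that the convention removes.

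Second I would run the same argument for the Gram--Schmidt side. Let $Y = (\mathbf{y}_1 \mid \cdots \mid \mathbf{y}_N)$ have i.i.d.\ $\mathcal{N}(0,\Id_N)$ columns, so $Y$ is a.s.\ invertible and its columns are a.s.\ linearly independent; write $Y = \Gamma R$ for its (column) QR-type decomposition, where $\Gamma = (\bm{\gamma}_1 \mid \cdots \mid \bm{\gamma}_N) \in O(N)$ is exactly the Gram--Schmidt orthonormalization defined in the excerpt and $R$ is upper triangular with positive diagonal. For fixed $O \in O(N)$, the matrix $OY$ has i.i.d.\ $\mathcal{N}(0,\Id_N)$ columns as well (rotation invariance of the standard Gaussian), so $OY \overset{d}{=} Y$; and since the QR factorization with positive diagonal is unique, the $\Gamma$-factor of $OY$ is $O\Gamma$, giving $O\Gamma \overset{d}{=} \Gamma$. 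Again left-invariance plus uniqueness of Haar measure on the compact group $O(N)$ yields that $\Gamma$ is Haar distributed.

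Finally, since both $U$ and $\Gamma$ are Haar distributed on $O(N)$, they are equal in law as $O(N)$-valued random matrices, hence their columns agree in joint distribution: $(\mathbf{u}_1,\dots,\mathbf{u}_N) \overset{d}{=} (\bm{\gamma}_1,\dots,\bm{\gamma}_N)$. I do not expect any real obstacle here — this is a classical fact — but the one place that needs a little care is making the ``eigenvector matrix'' a genuine measurable function of $H$ (resolving the sign/permutation ambiguity by an explicit convention and using a.s.\ simplicity of the spectrum) so that the equivariance statement $U(OHO^T) = O\,U(H)$ holds on a set of full measure; everything else is the two-line Haar-uniqueness argument applied twice.
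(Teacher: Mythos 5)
Your route is the same as the paper's: realize both the GOE eigenvector matrix and the Gram--Schmidt matrix $\bm{\Gamma}$ as $O(N)$-valued random variables, show each has a left-invariant law (conjugation invariance of GOE on one side, rotation invariance of i.i.d.\ Gaussian columns plus uniqueness of the QR factor on the other), and invoke uniqueness of Haar measure on the compact group $O(N)$. The Gram--Schmidt half of your argument is complete and correct.

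One step on the eigenvector side, however, does not hold as written: the deterministic sign convention you propose (fix the sign of the first nonzero coordinate of each eigenvector) is \emph{not} equivariant under conjugation. If $\mathbf{u}$ is an eigenvector of $H$ obeying the convention, then $O\mathbf{u}$ is an eigenvector of $OHO^T$ but its first nonzero coordinate may well be negative, so in general $U(OHO^T) = O\,U(H)D$ for a diagonal sign matrix $D = D(O,H)$, and the conclusion $OU \overset{d}{=} U$ does not follow from this convention. (Ordering by eigenvalues is fine, since the spectrum is conjugation-invariant and a.s.\ simple, as you note.) The standard repair is to resolve the sign ambiguity by multiplying each eigenvector by an independent uniform $\pm 1$ sign, independent of $H$; with that choice $U(OHO^T) \overset{d}{=} O\,U(H)$ for every fixed $O$, the Haar-uniqueness argument goes through, and the lemma should be read with this (or any symmetric) sign choice --- which is harmless for the paper, since every statistic considered there, in particular $\ip{\mathbf{u}_i, A_N \mathbf{u}_i}$, is even in each $\mathbf{u}_i$. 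Alternatively one can run the whole argument on the quotient of $O(N)$ by column sign flips. With that adjustment your proof coincides with the paper's argument from Section~\ref{sec:13}, which itself leaves this sign issue implicit.
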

In other words, we realize the eigenvectors as the outcome of a Gram--Schmidt procedure 
on independent Gaussian vectors $\mathbf{y}_i$ and then we estimate the error. Since $\mathbf{y}$'s 
are almost orthogonal, the effect of the first few Gram--Schmidt steps is minor, but it adds up
when we perform $k$ of them.  The following proposition, which is at the heart of the proof of Theorem \ref{thm:main}, 
shows that $k\ll \sqrt{N}$ steps are still controllable as far as the extremal statistics of quadratic forms are concerned. Its
 proof takes up the entire Section \ref{sec:undoing_gram_schmidt}; some more intuition
 is explained in Section \ref{sec:prelim}.

\begin{prop}
\label{prop:undoing_gram_schmidt}
Fix $\delta > 0$, and take some sequence $(k = k_N)_{N=1}^\infty$ of positive integers satisfying
\[
	k = \OO(N^{1/2-\delta}).
\]
For each $k$, choose a $k$-tuple of deterministic real numbers $(a_1, \ldots, a_k) = (a_1^{(k)}, \ldots, a^{(k)}_k)$, and suppose there is some fixed $a > 0$, independent of $N$ and of $k$, with 
\begin{equation}
\label{eqn:as_bounded}
	\max_k \max_{i=1}^k \abs{a_i^{(k)}} \leq a.
\end{equation}
Then
\begin{equation}
\label{eqn:undoing_gram_schmidt_abs}
	\max_{j=1}^N \abs{\sum_{i=1}^k a_i (\sqrt{N}\gamma_{ij})^2 } - \max_{j=1}^N \abs{\sum_{i=1}^k a_i y_{ij}^2} \overset{N \to \infty}{\to} 0 \quad \text{in prob.}
\end{equation}
and
\begin{equation}
\label{eqn:undoing_gram_schmidt_noabs}
	\max_{j=1}^N \left( \sum_{i=1}^k a_i (\sqrt{N}\gamma_{ij})^2 \right) - \max_{j=1}^N \left( \sum_{i=1}^k a_i y_{ij}^2 \right) \overset{N \to \infty}{\to} 0 \quad \text{in prob.}
\end{equation}
\end{prop}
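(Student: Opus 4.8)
The plan is to show that the vector of "Gram--Schmidt defects" $\sqrt{N}\gamma_{ij} - y_{ij}$ is, uniformly in $j$, so small that it cannot affect the maximum over $j$ of the quadratic forms. Write $\sqrt{N}\gamma_{ij} = y_{ij} + \Delta_{ij}$, where $\Delta_{ij}$ collects both the effect of orthogonalizing $\mathbf{w}_i$ against $\mathbf{w}_1, \ldots, \mathbf{w}_{i-1}$ and the effect of the rescaling $\sqrt{N}/\|\mathbf{w}_i\|$. Expanding the quadratic form,
\[
	\sum_{i=1}^k a_i (\sqrt{N}\gamma_{ij})^2 - \sum_{i=1}^k a_i y_{ij}^2 = \sum_{i=1}^k a_i \left( 2 y_{ij} \Delta_{ij} + \Delta_{ij}^2 \right) =: R_j,
\]
so by the standard inequality $|\max_j f_j - \max_j g_j| \leq \max_j |f_j - g_j|$ (and the same for absolute values) it suffices to prove $\max_{j=1}^N |R_j| \to 0$ in probability. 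Because $\sup_k \max_i |a_i| \leq a$ is bounded, this reduces to controlling, uniformly in $j$, the quantities $\sum_{i=1}^k |y_{ij}||\Delta_{ij}|$ and $\sum_{i=1}^k \Delta_{ij}^2$.

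First I would set up a high-probability good event on which the Gram--Schmidt data behaves well: using Proposition \ref{prop:jiang_bounds} and Lemma \ref{lem:delta_replacement_good_event} (the borrowed estimates from \cite{Jia2005}), one has $\|\mathbf{w}_i\|^2 = N(1 + O(\text{small}))$ and the off-diagonal inner products $\ip{\mathbf{w}_i,\mathbf{w}_j}$ are all of order $\sqrt{N}$ up to logarithmic factors, for all $i \leq k$ simultaneously. On this event one can write $\Delta_{ij}$ explicitly as a sum over "paths" through the Gram--Schmidt recursion — each step contributing a factor of a normalized inner product of size roughly $N^{-1/2}$ times a polylog — so that a single $\Delta_{ij}$ is typically of size $k \cdot N^{-1/2} \cdot \mathrm{polylog}$, which is $o(1)$ precisely because $k = O(N^{1/2-\delta})$. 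The naive bound $\max_j |R_j| \lesssim k \cdot \max_{i,j}|y_{ij}| \cdot \max_{i,j}|\Delta_{ij}| \lesssim k \cdot \mathrm{polylog} \cdot (k N^{-1/2}\,\mathrm{polylog})$ already works when $k = O(N^{1/3-\delta})$, which is the weaker Remark \ref{rem:jiang} statement; to reach the exponent $1/2$ one cannot afford to put absolute values inside the sum over $i$.

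The main obstacle — and where the real work lies — is exploiting cancellation in the sum $\sum_{i=1}^k a_i (2 y_{ij}\Delta_{ij} + \Delta_{ij}^2)$ over the $k$ indices $i$, for each fixed $j$, and then controlling the maximum over the $N$ choices of $j$. The plan is a high-moment (graphical) expansion: raise $R_j$ to a large even power $p$, expand, and organize the resulting monomials in the Gaussian variables $y_{ij}$ (and the inner-product factors hidden in $\Delta_{ij}$) into graphs, as in the proof of Lemma \ref{lem:graphs}. Taking expectations kills all terms with an unpaired Gaussian, and a careful count of the surviving "paired" graphs should show $\E[R_j^p] \leq (C k^2 N^{-1} \mathrm{polylog})^{p/2}\cdot(\text{combinatorial factor})$, i.e. essentially the variance bound $\E[R_j^2] \lesssim k^2/N$ raised to the $p/2$ power with benign combinatorics. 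A union bound over $j \in \{1,\ldots,N\}$ then gives $\P(\max_j |R_j| > \e) \leq N \cdot \e^{-p} \E[R_j^p] \to 0$ once $p$ is taken large enough relative to $1/\delta$. The delicate point in the graphical count is that the factors inside $\Delta_{ij}$ are not independent of the $y$'s appearing elsewhere (they are themselves built from the $\mathbf{y}_i$'s), so one must either condition appropriately on the lower-index Gaussians at each Gram--Schmidt step or absorb these dependencies into the good-event estimates; making this bookkeeping rigorous while keeping the $k^2/N$ scaling is the crux of Section \ref{sec:undoing_gram_schmidt}. The two displays \eqref{eqn:undoing_gram_schmidt_abs} and \eqref{eqn:undoing_gram_schmidt_noabs} follow from the identical estimate on $\max_j |R_j|$, since both $\max_j |\cdot|$ and $\max_j(\cdot)$ are $1$-Lipschitz in the sup-norm over $j$.
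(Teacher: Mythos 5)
Your reduction is the same as the paper's: pass to $\max_j|R_j|$ via the $1$-Lipschitz property of the maximum, note that the square term $\sum_i a_i(\sqrt{N}\gamma_{ij}-y_{ij})^2$ is harmless under $k=\OO(N^{1/2-\delta})$ by Jiang's bound on $\epsilon_N(k)$, and identify the cross term $\sum_i a_i(\sqrt{N}\gamma_{ij}-y_{ij})y_{ij}$ as the place where cancellation over $i$ must be exploited by a high-moment expansion plus a union bound over $j$. Up to that point you match Section \ref{sec:210} and Proposition \ref{prop:cumulant_expansion} in outline.

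The gap is in the step you defer as ``bookkeeping'': you propose to raise $R_j$, built from the true defects $\sqrt{N}\gamma_{ij}-y_{ij}$, to a high power and Wick-expand. But these defects are \emph{rational}, not polynomial, functions of the Gaussians (they contain the normalizations $\|\mathbf{w}_\ell\|^{-1}$ and the projections $\bm{\gamma}_\ell\bm{\gamma}_\ell^T$ through all earlier Gram--Schmidt steps), so there is no Wick calculus to apply to them directly; the paper explicitly states it does not know how to prove the analogue of Proposition \ref{prop:cumulant_expansion} with $\widetilde{\Delta}_{ij}$ replaced by $\sqrt{N}\gamma_{ij}-y_{ij}$. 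Your two suggested remedies do not close this: conditioning on lower-index Gaussians does not turn the normalizations into polynomial factors, and ``absorbing the dependencies into good-event estimates'' amounts to putting absolute values back inside the $i$-sum, which is exactly the route that caps you at $k=\OO(N^{1/3-\delta})$ (Remark \ref{rem:jiang}). The missing idea is the intermediate, entrywise replacement of the Gram--Schmidt correction $\Delta_{ij}$ by its explicit first-order polynomial proxy $\widetilde{\Delta}_{ij}=\frac1N\sum_{\ell<i}\ip{\mathbf{y}_i,\mathbf{y}_\ell}y_{\ell j}$, with error $\leq N^{-1/2}$ uniformly in $i\leq k$, $j\leq N$; this is Proposition \ref{prop:deltadeltatilde}, itself proved through a chain of intermediate quantities $\bm{\Delta}^{(1)},\dots,\bm{\Delta}^{(4)}$ and a separate graphical expansion (Lemma \ref{lem:graphs}), crucially using that each Gram--Schmidt vector depends only on earlier Gaussians. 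Only after this replacement does the Wick/graph expansion of $\sum_i a_i\widetilde{\Delta}_{ij}y_{ij}$ (Lemma \ref{lem:graphs_easier}) deliver the $(k/\sqrt{N})^p$ moment bound you want, and that whole layer is absent from your proposal. (A minor side point: the typical size of a single defect is $\sqrt{k/N}$ up to logs, not $k/\sqrt{N}$, though this does not affect your main argument.)
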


\begin{rem}
A close reading of the  proof gives effective estimates of the form $\P( | \ldots| \ge \epsilon) \le C_{D,\epsilon} N^{-D}$ for any fixed 
$\epsilon$ and $D$ for \eqref{eqn:undoing_gram_schmidt_abs} and \eqref{eqn:undoing_gram_schmidt_noabs}. 
\end{rem}

\begin{proof}[Proof of Theorem \ref{thm:main}]
We give the proof for the version without absolute values, the version with absolute values being similar. Write $\mathbf{Y}$ (respectively $\mathbf{U}$, respectively $\bm{\Gamma}$) for the matrix whose columns are $\mathbf{y}_1, \ldots, \mathbf{y}_N$ (respectively $\mathbf{u}_1, \ldots, \mathbf{u}_N$, respectively $\bm{\gamma}_1, \ldots, \bm{\gamma}_N$). Since the distributions of $\mathbf{Y}$ and $\mathbf{U}$ are each invariant under orthogonal conjugation, we can assume without loss of generality that $A_N$ is diagonal, and has the form
\[
	A_N = \diag(a_1, \ldots, a_k, 0, \ldots, 0)
\]
for some real, nonzero $a_1, \ldots, a_k$. Furthermore, since the distributions of $\mathbf{Y}$ and $\bm{\Gamma}$ are each invariant under switching rows and columns, we have 
\begin{align*}
	\max_{i=1}^N \ip{\mathbf{y}_i, A_N \mathbf{y}_i} &= \max_{i=1}^N \sum_{j=1}^k a_j y_{ij}^2 \overset{d}{=} \max_{j=1}^N \sum_{i=1}^k a_i y_{ij}^2, \\
	\max_{i=1}^N N \ip{\mathbf{u}_i, A_N \mathbf{u}_i} &\overset{d}{=} \max_{i=1}^N N \ip{\bm{\gamma}_i, A_N \bm{\gamma}_i} = \max_{i=1}^N \sum_{j=1}^k a_j (\sqrt{N}\gamma_{ij})^2 \overset{d}{=} \max_{j=1}^N \sum_{i=1}^k a_i (\sqrt{N}\gamma_{ij})^2,
\end{align*}
where the first equality in distribution in the second line follows from Lemma \ref{lem:recognizing_gram_schmidt}. The rest of the proof follows from Proposition \ref{prop:undoing_gram_schmidt}.
\end{proof}

\begin{rem}
The swapping of row and column indices in the above proof is very important: It allows one to consider, not the first few entries of all the $\bm{\gamma}_i$'s, but all the entries of the first few $\bm{\gamma}_i$'s. Since the effect of Gram--Schmidt is smaller on vectors considered earlier in the process, this is a beneficial switch.  We 
point out  that we do not know how to prove the analogue of Proposition \ref{prop:undoing_gram_schmidt} that considers $\max_{j=1}^k (\sum_{i=1}^N a_i  (\sqrt{N}\gamma_{ij})^2) - \max_{j=1}^k (\sum_{i=1}^N a_i y_{ij}^2)$.
For the remainder of the paper, the first index in terms like $\gamma_{ij}$, typically denoted by $i$ and subject to $i\le k$,
 will always track the order in the Gram--Schmidt orthogonalization procedure.
\end{rem}

%%%%%%%%%%%%%%%%%%%%%%%%%%%%%%%%%%%%%%%%%%%%%%%%%%%%%%%%%%%%
%%%%%%%%%%%%%%%%%%%%%%%%%%%%%%%%%%%%%%%%%%%%%%%%%%%%%%%%%%%%
%%%%%%%%
%%%%%%%%             Section: Undoing Gram--Schmidt
%%%%%%%%
%%%%%%%%%%%%%%%%%%%%%%%%%%%%%%%%%%%%%%%%%%%%%%%%%%%%%%%%%%%%
%%%%%%%%%%%%%%%%%%%%%%%%%%%%%%%%%%%%%%%%%%%%%%%%%%%%%%%%%%%%

\section{Undoing Gram--Schmidt: Proof of Proposition \ref{prop:undoing_gram_schmidt}}
\label{sec:undoing_gram_schmidt}

%%%%%%%%%%%%%%%%%%%%%%%%%%%%%%%%%%%%%%%%%%%%%%%%%%%%%%%%%%%%
%%%%%%%%             Subsection: Preliminaries
%%%%%%%%%%%%%%%%%%%%%%%%%%%%%%%%%%%%%%%%%%%%%%%%%%%%%%%%%%%%

\subsection{Preliminaries.}\label{sec:prelim}\

The Gram--Schmidt vectors $\bm{\gamma}$ are complicated rational functions of the Gaussian vectors $\mathbf{y}$, but
to leading order $\bm{\gamma}_i \approx \mathbf{y}_i/\sqrt{N}$, at least when $i$ is not too big; this is the main intuition 
behind Proposition \ref{prop:undoing_gram_schmidt}. We need to estimate the effect of the error terms on the
extremal statistics and it will be done in several steps; see Lemma  \ref{lem:deltasdeltas} below.
In some easier steps, high probability Gaussian concentration bounds
on the $\mathbf{y}$'s suffice (Section~\ref{sec:easy}) -- this is similar to Jiang's method and it is sufficient up to $k\ll N^{1/3}$,
see Remark \ref{rem:jiang}.
However, to reach our $k\ll N^{1/2}$ threshold, we need to use a delicate cancellation mechanism
in a big sum of Gaussian monomials; in Section~\ref{sec:hard} we apply a fully nonlinear chaos expansion and bookkeep 
the various terms by graphs, reminiscent to a 
Feynman diagrammatic expansion. Here we crucially use the martingale structure built in the Gram--Schmidt procedure: for any index $i$, the
first $i$ Gram--Schmidt vectors depend only on the first $i$ Gaussian vectors.

 Define the error vectors $\bm{\Delta}_i$ (with entries $(\bm{\Delta}_i)_j = \Delta_{ij}$) given by
\[
	\bm{\Delta}_1 \defeq \bm{0}, \quad \bm{\Delta}_i \defeq \sum_{\ell=1}^{i-1} \ip{\mathbf{y}_i,\bm{\gamma}_\ell}\bm{\gamma}_\ell \qquad \text{for } i = 2, \ldots, N,
\]
and notice that
\[
	\mathbf{w}_i = \mathbf{y}_i - \sum_{\ell=1}^{i-1} \ip{\mathbf{y}_i,\bm{\gamma}_\ell} \bm{\gamma}_\ell = \mathbf{y}_i - \bm{\Delta}_i.
\]

The variables $\bm{\Delta}_i$ appear naturally in the Gram--Schmidt procedure, 
 but they include the normalization of $\mathbf{w}$ which is not convenient for expansions, so
 we will replace them by the variables $\widetilde{\bm{\Delta}}_i$, defined as
\begin{align*}
	\widetilde{\bm{\Delta}}_1 \defeq 0, \qquad \widetilde{\bm{\Delta}}_i &\defeq \frac{1}{N} \sum_{\ell=1}^{i-1} \ip{\mathbf{y}_i, \mathbf{y}_\ell} \mathbf{y}_\ell \quad \text{for} \quad i = 2, \ldots, N,
\end{align*}
which are polynomials in the $\mathbf{y}$'s. Note that $\widetilde{\bm{\Delta}}/\sqrt{N}$ is the next order correction 
to $\bm{\gamma}_i \approx \mathbf{y}_i/\sqrt{N}$ in Gram--Schmidt.
The main goal of Section \ref{sec:undoing_gram_schmidt} is to prove the following proposition, which essentially shows that higher-order corrections
can be ignored for our purpose. Proposition \ref{prop:undoing_gram_schmidt} will relatively easily follow from it in Section~\ref{sec:210}.
\begin{prop}
\label{prop:deltadeltatilde}
Fix $\delta > 0$. If   
\[
	k = \OO(N^{1/2 - \delta}),
\]
then for every $D > 0$ there exists $C_D > 0$ with
\begin{equation}
\label{eqn:prop_deltadeltatilde}
	\P\left( \max_{i=1}^k \max_{j=1}^N \abs{\Delta_{ij} - \widetilde{\Delta}_{ij}} \geq \frac{1}{N^{1/2}} \right) \leq C_DN^{-D}.
\end{equation}
\end{prop}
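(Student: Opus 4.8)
\emph{Setup and reduction.} The plan is to fix a high-probability ``good event'' $\mathcal G$ carrying the usual Gaussian concentration bounds, and then on $\mathcal G$ to reduce $\Delta_{ij}-\widetilde\Delta_{ij}$ to one genuinely delicate multilinear sum. Writing $Y_{i-1}$ for the $N\times(i-1)$ matrix with columns $\mathbf y_1,\dots,\mathbf y_{i-1}$ and $P_{i-1}$ for the orthogonal projection onto its column span, one has $\bm\Delta_i=P_{i-1}\mathbf y_i=Y_{i-1}(Y_{i-1}^\top Y_{i-1})^{-1}Y_{i-1}^\top\mathbf y_i$ while $\widetilde{\bm\Delta}_i=\frac1N Y_{i-1}Y_{i-1}^\top\mathbf y_i$; hence, setting $E\defeq\frac1N Y_{i-1}^\top Y_{i-1}-\Id$, one has the exact identity $\bm\Delta_i-\widetilde{\bm\Delta}_i=-\frac1N Y_{i-1}E(\Id+E)^{-1}Y_{i-1}^\top\mathbf y_i$. (The same expression can be reached more hands-on, and probably closer to the intended Lemma~\ref{lem:deltasdeltas}, by substituting $\bm\gamma_\ell=\mathbf y_\ell/\sqrt N+\bm\epsilon_\ell$ into the Gram--Schmidt recursion and peeling off error terms order by order, using that $\bm\gamma_\ell$ is measurable with respect to $\mathbf y_1,\dots,\mathbf y_\ell$.) I would then Neumann-expand $(\Id+E)^{-1}=\sum_{r\ge0}(-E)^r$ and keep only $r=0$: with $S\defeq Y_{i-1}Y_{i-1}^\top$, the $r=0$ term is $\widetilde{\bm\Delta}_i-\frac1{N^2}S^2\mathbf y_i$, whose $j$-th component, after separating the diagonal $m=m'$, equals $-T_{ij}+\frac1{N^2}\sum_{m<i}(N-\|\mathbf y_m\|^2)\ip{\mathbf y_m,\mathbf y_i}(\mathbf y_m)_j$, where
\[
	T_{ij}\defeq\frac1{N^2}\sum_{\substack{m,m'<i\\ m\ne m'}}\ip{\mathbf y_m,\mathbf y_{m'}}\ip{\mathbf y_{m'},\mathbf y_i}(\mathbf y_m)_j .
\]
So, modulo the terms treated next, $\Delta_{ij}-\widetilde\Delta_{ij}=-T_{ij}$ on $\mathcal G$.

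\emph{The routine estimates.} Take $\mathcal G$ to be the intersection, over all relevant $i,m,m'\le k$ and $j\le N$, of the standard bounds $\big|\,\|\mathbf y_m\|^2-N\,\big|\le\sqrt{CN\log N}$, $|\ip{\mathbf y_m,\mathbf y_{m'}}|\le\sqrt{CN\log N}$ for $m\ne m'$, $\vertiii{\mathbf y_m}\le\sqrt{C\log N}$, together with the non-asymptotic Bai--Yin bound $\|E\|\le C\big(\sqrt{k/N}+\sqrt{(\log N)/N}\big)$; for $C=C(D)$ large enough, $\P(\mathcal G^c)\le C_DN^{-D}$ by Gaussian tails and a union bound over $\OO(N^2)$ indices. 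Since $k=\OO(N^{1/2-\delta})$, on $\mathcal G$ we have $\|E\|\le CN^{-1/4-\delta/2}\to0$, so $\|(\Id+E)^{-1}\|\le2$, the Neumann series converges, and its tail $-\frac1N Y_{i-1}E^2(\Id+E)^{-1}Y_{i-1}^\top\mathbf y_i$ has all entries bounded, via $\|Y_{i-1}^\top\mathbf y_i\|\le\sqrt k\sqrt{CN\log N}$ and a crude passage from $\vertiii{\cdot}$ to $\|\cdot\|_2$, by $N^{-1/2-2\delta}(\log N)^{\OO(1)}$ on $\mathcal G$; similarly $\big|\frac1{N^2}\sum_{m<i}(N-\|\mathbf y_m\|^2)\ip{\mathbf y_m,\mathbf y_i}(\mathbf y_m)_j\big|\le N^{-1/2-\delta}(\log N)^{\OO(1)}$ on $\mathcal G$. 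Both are $\oo(N^{-1/2})$. Stopping here, with only such pointwise and operator-norm inputs, is essentially Jiang's method and would only give the weaker restriction $k\ll N^{1/3}$; cf.\ Remark~\ref{rem:jiang}.

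\emph{The main obstacle: cancellation in $T_{ij}$.} This is where essentially all the work sits: one must show $\max_{i\le k}\max_{j\le N}|T_{ij}|\le N^{-1/2}$ off an event of probability $\le C_DN^{-D}$, and the naive estimate on $\mathcal G$ — bounding each of the $\le k^2$ summands by $N(\log N)^{\OO(1)}$ — gives only $|T_{ij}|\lesssim k^2(\log N)^{\OO(1)}/N\approx N^{-2\delta}$, which tends to $0$ but lies far above $N^{-1/2}$ when $\delta$ is small; the exponent $1/2$ is reached only by exploiting cancellations among the $k^2$ terms. I would do this by a (Gaussian) chaos expansion of the degree-five polynomial $T_{ij}$: write $\ip{\mathbf y_m,\mathbf y_{m'}}=\sum_p y_{mp}y_{m'p}$ and group the resulting monomials by which entries are contracted with which, the ``graph'' bookkeeping in the style of Lemma~\ref{lem:graphs}. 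The ``self-contractions'' (coincidences such as $p=q$ or $p=j$, producing $\|\mathbf y_{m'}\|^2$- or $y_{mj}^2$-factors and lowering the degree) do not vanish automatically, but after the centering $y^2=1+(y^2-1)$ they split into pieces recognizable as $\widetilde\Delta$-type objects plus genuinely centered lower-degree sums, each of which is $\oo(N^{-1/2})$ on $\mathcal G$; the remaining ``connected'' chaos components are mean zero, and a Wick-type variance computation shows their $L^2$ norm is of order $(k/N)(\log N)^{\OO(1)}$ — that is, $k/N$ rather than the naive $k^2/N$. Gaussian hypercontractivity then upgrades this to a subexponential tail $\P(|T_{ij}|\ge t)\le\exp\big(-c(t/\|T_{ij}\|_2)^{2/5}\big)$; choosing $t=N^{-1/2-\delta/2}$ makes this superpolynomially small, and a union bound over the $\OO(kN)$ pairs $(i,j)$ yields $\max_{i,j}|T_{ij}|\le N^{-1/2-\delta/2}\le N^{-1/2}$ with the required probability. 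The structural point that makes all of this go through — keeping the conditioning arguments and the variance count tractable, and which is even more essential for the longer ``chain sums'' that appear at higher orders and in deducing Proposition~\ref{prop:undoing_gram_schmidt} from this statement in Section~\ref{sec:210} — is the Gram--Schmidt martingale filtration $\mathcal F_\ell=\sigma(\mathbf y_1,\dots,\mathbf y_\ell)$: conditioning on $\mathcal F_{i-1}$ turns $\mathbf y_i$, hence the factor $\ip{\mathbf y_{m'},\mathbf y_i}$, into a fresh centered Gaussian. Combining the displays above with a union bound over $i\le k$, $j\le N$ then gives the proposition.
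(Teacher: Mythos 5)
Your proposal is correct in substance, but it takes a genuinely different route from the paper's proof. You exploit the exact identity $\bm{\Delta}_i-\widetilde{\bm{\Delta}}_i=-\frac1N Y_{i-1}E(\Id+E)^{-1}Y_{i-1}^\top\mathbf{y}_i$ with $E=\frac1N Y_{i-1}^\top Y_{i-1}-\Id$, dispose of the Neumann tail and the diagonal $m=m'$ piece by operator-norm (sample-covariance) and entrywise bounds on a good event, and concentrate all the cancellation in the single mean-zero degree-five polynomial $T_{ij}$, which you handle by a variance bound plus Gaussian hypercontractivity and a union bound. The two nonroutine claims check out: the identity is exact almost surely (since $\bm{\Delta}_i$ is the orthogonal projection of $\mathbf{y}_i$ onto the span of $\mathbf{y}_1,\dots,\mathbf{y}_{i-1}$), and the Wick computation indeed gives $\E[T_{ij}^2]\leq Ck^2/N^2$ (the dominant coincidence pattern is $m=n$, $m'=n'$, contributing $k^2\cdot N^2/N^4$, while the fully distinct pattern gives only $k^4/N^4$), so with $t=N^{-1/2-\delta/2}$ the degree-five tail bound is superpolynomially small and the union over $\OO(kN)$ pairs closes the argument; in fact you do not even need the chaos decomposition or the separate treatment of self-contractions, since the hypercontractive tail bound applies directly to the full mean-zero polynomial once its variance is controlled. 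The paper instead interpolates through the chain $\bm{\Delta}\to\bm{\Delta}^{(1)}\to\cdots\to\bm{\Delta}^{(4)}\to\widetilde{\bm{\Delta}}$ of Lemma \ref{lem:deltasdeltas}, writes each difference as $M\mathbf{y}_i$ with $M$ measurable with respect to $\mathbf{y}_1,\dots,\mathbf{y}_{i-1}$ (so conditionally Gaussian with variance $\sigma_{ij}^2$), and bounds $N\sigma_{ij}^2$ on Jiang's good event for the easy steps and, for the delicate step \eqref{eqn:delta4deltatilde-prob}, by the $p$-th moment estimate of Lemma \ref{lem:graphs}, proved via the unit-graph/path-graph expansion. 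Your route buys economy by importing standard tools (nonasymptotic covariance concentration, hypercontractive concentration for Gaussian polynomials) and by localizing the cancellation in one explicit second moment; the paper's route is more self-contained (only Wick calculus plus Jiang's elementary lemmas), and its graphical machinery is reused essentially verbatim for Proposition \ref{prop:cumulant_expansion}, which is still needed later in deducing Proposition \ref{prop:undoing_gram_schmidt} -- so if you adopt your scheme you would still want a $T_{ij}$-style variance/hypercontractivity argument for $\sum_i a_i\widetilde{\Delta}_{ij}y_{ij}$ at that stage.
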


We introduce the intermediate quantities
\begin{align*}
	\bm{\Delta}^{(1)}_i &\defeq \frac{1}{N} \sum_{\ell=1}^{i-1} \ip{\mathbf{y}_i,\mathbf{w}_\ell} \mathbf{w}_\ell, \\
	\bm{\Delta}^{(2)}_i &\defeq \frac{1}{N} \sum_{\ell=1}^{i-1} \ip{\mathbf{y}_i,\mathbf{w}_\ell} \mathbf{y}_\ell = \frac{1}{N} \sum_{\ell=1}^{i-1} \ip{\mathbf{y}_i,\mathbf{y}_\ell - \bm{\Delta}_\ell} \mathbf{y}_\ell, \\
	\bm{\Delta}^{(3)}_i &\defeq \frac{1}{N} \sum_{\ell=1}^{i-1} \ip{\mathbf{y}_i,\mathbf{y}_\ell - \bm{\Delta}^{(2)}_\ell} \mathbf{y}_\ell, \\
	\bm{\Delta}^{(4)}_i &\defeq \frac{1}{N} \sum_{\ell=1}^{i-1} \ip{\mathbf{y}_i,\mathbf{y}_\ell - \bm{\widetilde{\Delta}}_\ell} \mathbf{y}_\ell.
\end{align*}
Proposition \ref{prop:deltadeltatilde} will follow from the following lemma (the constants $10$ below 
do not matter; they simply guarantee that we can use a few triangle inequalities). 

\begin{lem}
\label{lem:deltasdeltas}
Fix $\delta > 0$. If $k = \OO(N^{1/2-\delta})$, then  for every $D > 0$ there exists $C_D > 0$ with 
\begin{align}
	\P\left(\max_{i=1}^k \max_{j=1}^N \abs{\Delta_{ij} - \Delta^{(1)}_{ij}} \geq \frac{1}{10N^{1/2}} \right) &\leq C_D N^{-D}, \label{eqn:deltadelta1-prob} \\
	\P\left(\max_{i=1}^k \max_{j=1}^N \abs{\Delta^{(1)}_{ij} - \Delta^{(2)}_{ij}} \geq \frac{1}{10N^{1/2}} \right) &\leq C_D N^{-D}, \label{eqn:delta1delta2-prob} \\
	\P\left(\max_{i=1}^k \max_{j=1}^N \abs{\Delta^{(2)}_{ij} - \Delta^{(3)}_{ij}} \geq \frac{1}{10N^{1/2}} \right) &\leq C_D N^{-D}, \label{eqn:delta2delta3-prob} \\
	\P\left(\max_{i=1}^k \max_{j=1}^N \abs{\Delta^{(3)}_{ij} - \Delta^{(4)}_{ij}} \geq \frac{1}{10N^{1/2}} \right) &\leq C_D N^{-D}, \label{eqn:delta3delta4-prob} \\
	\P\left(\max_{i=1}^k \max_{j=1}^N \abs{\Delta^{(4)}_{ij} - \widetilde{\Delta}_{ij}} \geq \frac{1}{10N^{1/2}} \right) &\leq C_D N^{-D}. \label{eqn:delta4deltatilde-prob}
\end{align}
\end{lem}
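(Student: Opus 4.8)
The plan is to establish the five bounds \eqref{eqn:deltadelta1-prob}--\eqref{eqn:delta4deltatilde-prob} one at a time, and then deduce Proposition~\ref{prop:deltadeltatilde} --- and hence, via Section~\ref{sec:210}, Proposition~\ref{prop:undoing_gram_schmidt} --- by a union bound over the five events; the constants $1/10$ are present only so that five triangle inequalities stay below $1$. Throughout I would work on the good event furnished by Proposition~\ref{prop:jiang_bounds} and its corollary Lemma~\ref{lem:delta_replacement_good_event}, which has probability at least $1-C_DN^{-D}$ and on which the usual Gaussian concentration estimates hold uniformly over indices $\le k$: $\|\mathbf{y}_i\|^2 = N+\OO(\sqrt{N\log N})$, $\abs{\ip{\mathbf{y}_i,\mathbf{y}_j}}=\OO(\sqrt{N\log N})$ for $i\neq j$ (hence $\|\mathbf{w}_\ell\|^2 = N+\OO(\sqrt{N\log N})$ and $\|\bm{\Delta}_\ell\|^2=\OO(k)$), together with the delocalization $\vertiii{\bm{\gamma}_m}=\OO(\sqrt{(\log N)/N})$.

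The four links \eqref{eqn:deltadelta1-prob}--\eqref{eqn:delta3delta4-prob} all fit one template: after inserting the definitions, each of the relevant differences becomes a sum of the schematic form $\sum_{\ell=1}^{i-1}\ip{\mathbf{y}_i,\mathbf{f}_\ell}\,(\mathbf{g}_\ell)_j$ with $\mathbf{f}_\ell,\mathbf{g}_\ell$ depending only on $\mathbf{y}_1,\ldots,\mathbf{y}_\ell$. Since $\mathbf{y}_i$ is independent of $\mathcal{F}_{i-1}\defeq\sigma(\mathbf{y}_1,\ldots,\mathbf{y}_{i-1})$, this sum is, conditionally on $\mathcal{F}_{i-1}$, a centered Gaussian with variance $\|\sum_\ell(\mathbf{g}_\ell)_j\mathbf{f}_\ell\|^2$, which I would control from a priori bounds on the $\mathbf{f}_\ell$, and then I would apply a conditional Gaussian tail bound and a union bound over $i\le k$, $j\le N$. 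Concretely: for \eqref{eqn:deltadelta1-prob} one has $\mathbf{f}_\ell=\mathbf{w}_\ell$ with $\mathbf{g}_\ell$ carrying the tiny factor $\|\mathbf{w}_\ell\|^{-2}-N^{-1}=\OO(\sqrt{(\log N)/N^3})$; for \eqref{eqn:delta1delta2-prob} one has $\mathbf{f}_\ell=\mathbf{w}_\ell$ and $\mathbf{g}_\ell\propto\bm{\Delta}_\ell$, whose conditional variance $\tfrac1N\sum_\ell\Delta_{\ell j}^2$ is $\OO(k^2N^{-2}\,\mathrm{polylog}(N))$ on the good event (here the delocalization of the $\bm{\gamma}$'s enters); and for \eqref{eqn:delta2delta3-prob} and \eqref{eqn:delta3delta4-prob} one has $\mathbf{f}_\ell=\bm{\Delta}^{(2)}_\ell-\bm{\Delta}_\ell$, respectively $\mathbf{f}_\ell=\widetilde{\bm{\Delta}}_\ell-\bm{\Delta}^{(2)}_\ell$, which the triangle inequality rewrites in terms of earlier links of the chain and so have Euclidean norm $\OO(1)$. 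In every case the resulting bound is $\OO(kN^{-1}\,\mathrm{polylog}(N))=\OO(N^{-1/2-\delta}\,\mathrm{polylog}(N))=\oo(N^{-1/2})$. Two organizational remarks: \eqref{eqn:delta2delta3-prob} uses \eqref{eqn:deltadelta1-prob} and \eqref{eqn:delta1delta2-prob}, which are proved first; and \eqref{eqn:delta3delta4-prob} is \emph{self-referential}, because $\|\widetilde{\bm{\Delta}}_\ell-\bm{\Delta}^{(2)}_\ell\|$ involves $\bm{\Delta}^{(4)}_\ell-\bm{\Delta}^{(3)}_\ell$ at the smaller index $\ell<i$, so I would prove it by induction on the Gram--Schmidt index $i$ (trivially true at $i=1$), using \eqref{eqn:delta2delta3-prob} and the still-to-be-proved \eqref{eqn:delta4deltatilde-prob} as black boxes. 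For $k\ll N^{1/3}$ even a crude term-by-term estimate closes all four of these links; reaching $N^{1/2}$ is what forces the conditional-Gaussianity refinement in \eqref{eqn:delta1delta2-prob} (and, below, a more serious argument for \eqref{eqn:delta4deltatilde-prob}). These arguments make up Section~\ref{sec:easy}.

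The hard link --- and the step I expect to be the main obstacle --- is \eqref{eqn:delta4deltatilde-prob}. Here $\bm{\Delta}^{(4)}_i$ and $\widetilde{\bm{\Delta}}_i$ are genuine polynomials in the i.i.d. Gaussian entries, and
\[
	\Delta^{(4)}_{ij}-\widetilde{\Delta}_{ij}=-\frac{1}{N^2}\sum_{\ell=1}^{i-1}\sum_{m=1}^{\ell-1}\ip{\mathbf{y}_\ell,\mathbf{y}_m}\ip{\mathbf{y}_i,\mathbf{y}_m}\,y_{\ell j}
\]
is a centered degree-five polynomial with $\OO(k^2)$ summands; the crude term-by-term estimate gives only $\OO(k^2N^{-1}\,\mathrm{polylog}(N))=\OO(N^{-2\delta}\,\mathrm{polylog}(N))$, which tends to zero but is far above the target $N^{-1/2}$ --- this is exactly the obstruction that pins Jiang-type arguments to $k\ll N^{1/3}$. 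To capture the cancellation among the $\OO(k^2)$ summands, the plan is a high-moment argument: expand $\E\big[(\Delta^{(4)}_{ij}-\widetilde{\Delta}_{ij})^{2P}\big]$ by the Gaussian (Wick) formula, encode the admissible pairings of the resulting Gaussian factors as graphs reminiscent of a Feynman diagrammatic expansion, exploit the martingale structure of Gram--Schmidt (the strict ordering $m<\ell<i$, and the independence of $\mathbf{y}_i$ from $\mathbf{y}_1,\ldots,\mathbf{y}_{i-1}$) to kill the vast majority of the graphs, and then estimate the value and multiplicity of the surviving graph topologies to obtain $\E\big[(\Delta^{(4)}_{ij}-\widetilde{\Delta}_{ij})^{2P}\big]\le\big(C_P\,N^{-1/2-\delta}\,\mathrm{polylog}(N)\big)^{2P}$ (in fact with room to spare); then Markov's inequality with $P=P(D,\delta)$ large, together with a union bound over $i\le k$ and $j\le N$, yields \eqref{eqn:delta4deltatilde-prob}. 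This graph bookkeeping is the content of Section~\ref{sec:hard}, packaged as Lemma~\ref{lem:graphs}, and the crux is showing that the combinatorial proliferation of graphs is dominated by the per-graph smallness --- equivalently, that the genuine degrees of freedom in the double sum are far fewer than $k^2$.
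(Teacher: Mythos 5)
Your architecture coincides with the paper's: the same chain of five comparisons, the same reduction of each difference to $\ip{\mathbf{y}_i,\,\cdot\,}$ against an $\mathcal{F}_{i-1}$-measurable coefficient vector (so that, conditionally, it is a centered Gaussian whose variance is then bounded), the same good event built from Jiang's estimates via Proposition~\ref{prop:jiang_bounds} and Lemma~\ref{lem:delta_replacement_good_event} for the first links, and the same Wick/diagrammatic high-moment idea for the last link. Your two deviations are minor and legitimate. For \eqref{eqn:delta3delta4-prob} the paper does not induct: it bounds $\E\big[\ip{\bm{\Delta}^{(2)}_\ell-\widetilde{\bm{\Delta}}_\ell,\bm{\Delta}^{(2)}_{\ell'}-\widetilde{\bm{\Delta}}_{\ell'}}^{2p}\big]$ directly (Lemma~\ref{lem:delta2deltatilde}, resting on $\E[\ip{\mathbf{y}_\ell,\bm{\Delta}_m}^{2p}]\le C_p m^p$), which keeps \eqref{eqn:delta3delta4-prob} independent of \eqref{eqn:delta4deltatilde-prob}; your induction on the Gram--Schmidt index, recycling the entrywise bounds from \eqref{eqn:delta2delta3-prob}, \eqref{eqn:delta3delta4-prob} at smaller indices and \eqref{eqn:delta4deltatilde-prob}, is sound (the events at indices $<i$ are $\mathcal{F}_{i-1}$-measurable, so the conditional Gaussian tail bound applies on the propagated good event) and arguably leaner, provided \eqref{eqn:delta4deltatilde-prob} is established first, as you note. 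Expanding $\E[(\Delta^{(4)}_{ij}-\widetilde{\Delta}_{ij})^{2P}]$ directly rather than first conditioning and bounding moments of $N(\sigma^{(4,\infty)}_{ij})^2$ is an immaterial difference.

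The genuine gap is that the decisive step of \eqref{eqn:delta4deltatilde-prob} is announced rather than proved. You correctly identify that the term-by-term bound only gives $\OO(k^2N^{-1})$ and that one must quantify the cancellation via a graph-encoded moment expansion, but the statement ``the combinatorial proliferation of graphs is dominated by the per-graph smallness'' is precisely what has to be demonstrated, and it is where the improvement from $k\ll N^{1/3}$ to $k\ll N^{1/2}$ is won. In the paper this is Lemma~\ref{lem:graphs}: one needs (a) a per-pattern value bound obtained by applying the partial-integration rules \eqref{eqn:star_rules} exactly to those indices appearing twice within a single ``unit'' (monochromatic vertices of the unit graph), each application gaining a factor $N^{-1/2}$ over the straight power count, and (b) an entropy bound $a(P)\le k^{\#\mathrm{colors}-q}$, proved via the path-graph construction, showing that every forced inter-unit coincidence of colors costs a factor $k$; only the combination of (a) and (b) yields $N^{-3p}a(P)\abs{\E[\mathbf{y}_P]}\le C_p(k/\sqrt N)^p$ and hence $\E[(\Delta^{(4)}_{ij}-\widetilde{\Delta}_{ij})^{2p}]\le \big(C_p k^{1/2}N^{-3/4}\big)^{2p}$, which with $k=\OO(N^{1/2-\delta})$ beats $N^{-1/2}$ by a polynomial margin (your stated target $\big(C_PN^{-1/2-\delta}\big)^{2P}$ is in fact a bit stronger than what this count gives, though either suffices for Markov plus the union bound). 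Without an argument of this type, your proposal proves the four easier bounds but leaves \eqref{eqn:delta4deltatilde-prob}, and with it the lemma, open.
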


The estimates \eqref{eqn:deltadelta1-prob}, \eqref{eqn:delta1delta2-prob}, \eqref{eqn:delta2delta3-prob}, and \eqref{eqn:delta3delta4-prob} are easier; the technically delicate one is \eqref{eqn:delta4deltatilde-prob}, where a full diagrammatic expansion is used. The proofs of these estimates share the same starting steps, which we now describe in the case of \eqref{eqn:deltadelta1-prob}. One can rewrite
\begin{align*}
	\bm{\Delta}_i &= \left( \sum_{\ell=1}^{i-1} \bm{\gamma}_\ell \bm{\gamma}_\ell^T \right) \mathbf{y}_i \eqdef M^{(i)} \mathbf{y}_i, \\
	\bm{\Delta}^{(1)}_i &= \left( \frac{1}{N} \sum_{\ell=1}^{i-1} \mathbf{w}_\ell \mathbf{w}_\ell^T \right) \mathbf{y}_i \eqdef M^{(1,i)} \mathbf{y}_i,
\end{align*}
and then observe that the matrices $M^{(i)}$ and $M^{(1,i)}$ depend only on $\mathbf{y}_1, \ldots, \mathbf{y}_{i-1}$. In particular they are independent of $\mathbf{y}_i$, so that for each $i$ and $j$, we have the equality in distribution
\[
	\Delta_{ij} - \Delta^{(1)}_{ij} \overset{d}{=} \sigma^{(0,1)}_{ij} Z,
\]
where $Z$ is a standard Gaussian variable independent of everything else, and 
\[
	(\sigma_{ij}^{(0,1)})^2 \defeq ((M^{(i)} - M^{(1,i)})(M^{(i)} - M^{(1,i)})^T)_{jj}.
\]
With this information, the union bound gives
\begin{equation}
\label{eqn:delta_replacement_starting_point}
\begin{split}
	\P\left( \max_{i=1}^k \max_{j=1}^N \abs{\Delta_{ij} - \Delta^{(1)}_{ij}} \geq \frac{1}{10N^{1/2}}\right) &\leq kN \max_{i=1}^k \max_{j=1}^N \P\left(\abs{\Delta_{ij} - \Delta^{(1)}_{ij}} \geq \frac{1}{10N^{1/2}} \right) \\
	&= kN \max_{i=1}^k \max_{j=1}^N \P\left(N(\sigma_{ij}^{(0,1)})^2 Z^2 \geq \frac{1}{100} \right).
\end{split}
\end{equation}
Thus it remains only to understand $N(\sigma_{ij}^{(0,1)})^2$, which we will see is $\oo(1)$. For the proofs of the other estimates, we write
\begin{align*}
	\bm{\Delta}^{(2)}_i = M^{(2,i)}\mathbf{y}_i \qquad &\text{where} \qquad M^{(2,i)} \defeq \frac{1}{N} \sum_{\ell=1}^{i-1} \mathbf{y}_\ell \mathbf{w}_\ell^T, \\
	\bm{\Delta}^{(3)}_i = M^{(3,i)}\mathbf{y}_i \qquad &\text{where} \qquad M^{(3,i)} \defeq \frac{1}{N} \sum_{\ell=1}^{i-1} \mathbf{y}_\ell (\mathbf{y}_\ell - \bm{\Delta}_\ell^{(2)})^T, \\
	\bm{\Delta}^{(4)}_i = M^{(4,i)}\mathbf{y}_i \qquad &\text{where} \qquad M^{(4,i)} \defeq \frac{1}{N} \sum_{\ell=1}^{i-1} \mathbf{y}_\ell (\mathbf{y}_\ell - \widetilde{\bm{\Delta}}_\ell)^T, \\
	\widetilde{\bm{\Delta}}_i = \widetilde{M^{(i)}} \mathbf{y}_i \qquad &\text{where} \qquad \widetilde{M^{(i)}} \defeq \frac{1}{N} \sum_{\ell=1}^{i-1} \mathbf{y}_\ell \mathbf{y}_\ell^T, \\
\end{align*}
so that 
\begin{align*}
	\Delta^{(1)}_{ij} - \Delta^{(2)}_{ij} \overset{d}{=} \sigma_{ij}^{(1,2)} Z, \qquad &\text{where} \qquad (\sigma_{ij}^{(1,2)})^2 \defeq ((M^{(1,i)} - M^{(2,i)})(M^{(1,i)} - M^{(2,i)})^T)_{jj}, \\
	\Delta^{(2)}_{ij} - \Delta^{(3)}_{ij} \overset{d}{=} \sigma_{ij}^{(2,3)} Z, \qquad &\text{where} \qquad (\sigma_{ij}^{(2,3)})^2 \defeq ((M^{(2,i)} - M^{(3,i)})(M^{(2,i)} - M^{(3,i)})^T)_{jj}, \\
	\Delta^{(3)}_{ij} - \Delta^{(4)}_{ij} \overset{d}{=} \sigma_{ij}^{(3,4)} Z, \qquad &\text{where} \qquad (\sigma_{ij}^{(3,4)})^2 \defeq ((M^{(3,i)} - M^{(4,i)})(M^{(3,i)} - M^{(4,i)})^T)_{jj}, \\
	\Delta^{(4)}_{ij} - \widetilde{\Delta}_{ij} \overset{d}{=} \sigma_{ij}^{(4,\infty)} Z, \qquad &\text{where} \qquad (\sigma_{ij}^{(4,\infty)})^2 \defeq ((M^{(4,i)} - \widetilde{M^{(i)}})(M^{(4,i)} - \widetilde{M^{(i)}})^T)_{jj}.
\end{align*}
Here the $Z$'s denote standard Gaussian variables, not the same ones from line to line. In the proof of each estimate, we will jump straightaway into estimating the relevant $N\sigma_{ij}^2$, showing that it is $\oo(1)$. Sometimes this will be in the sense of high moments (typically order $1/\delta$ moments, where $k = \OO(N^{1/2-\delta})$), but other times we will just bound it on a certain
high probability ``good'' event which we now define, using the error variables
\[
	L_i \defeq \abs{\sqrt{\frac{N}{\|\mathbf{w}_i\|^2}} - 1} \qquad \text{for } i = 1, \ldots, N.
\]

\begin{lem}
\label{lem:delta_replacement_good_event}
Fix $\delta > 0$, $k = \OO(N^{1/2-\delta})$, and consider the event
\begin{equation}
\label{eqn:delta_replacement_good_event}
\begin{split}
	\mc{E}_{k,N}: =& \, \left\{ \max_{a,b=1}^N \abs{y_{ab}} \leq \log N \right\} \cap \left\{ \max_{a,b=1}^N \frac{\abs{\ip{\mathbf{y}_a,\mathbf{y}_b}}}{(\sqrt{N})^{1+\delta_{ab}}} \leq \log N \right\} \cap \left\{\max_{a=1}^k L_a \leq \frac{\log N}{\sqrt{N}} \right\} \\
	&\cap \left\{\max_{a,b=1}^N \abs{w_{ab}} \leq 2\log N \right\} \cap \left\{ \max_{a=1}^N \|\mathbf{w}_a\|^2 \leq N \log N\right\} \cap \left\{\max_{a=1}^k \max_{b=1}^N \abs{\Delta_{ab}} \leq (\log N)^2 \sqrt{\frac{k}{N}} \right\}.
\end{split}
\end{equation}
Then the complement of $\mc{E}_{k,N}$ has very small probability: For some $c > 0$,
\[
	\P((\mc{E}_{k,N})^c) = \OO(e^{-c(\log N)^2}).
\]
\end{lem}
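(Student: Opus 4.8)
The plan is to bound $\P((\mc{E}_{k,N})^c)$ by a union bound over the six events intersected in \eqref{eqn:delta_replacement_good_event}, reducing each to a Gaussian or $\chi^2$ tail estimate together with a final union bound over the at most $N^2$ relevant index pairs; this is in essence a repackaging of Jiang's bounds (Proposition~\ref{prop:jiang_bounds}), which is why the present lemma is called a corollary of them. The one structural ingredient is the filtration property of Gram--Schmidt: writing $\mathcal{F}_a \defeq \sigma(\mathbf{y}_1,\dots,\mathbf{y}_a)$, the vectors $\mathbf{w}_a,\bm{\gamma}_a$ and the orthogonal projection $\Pi_{a-1}\defeq\sum_{\ell<a}\bm{\gamma}_\ell\bm{\gamma}_\ell^T$ onto $\mathrm{span}(\mathbf{y}_1,\dots,\mathbf{y}_{a-1})$ are $\mathcal{F}_{a-1}$-measurable, and conditionally on $\mathcal{F}_{a-1}$ one has $\mathbf{w}_a=P_a\mathbf{y}_a$ with $P_a\defeq\Id-\Pi_{a-1}$ a deterministic orthogonal projection of rank $N-a+1$ and $\mathbf{y}_a\sim\mc{N}(0,\Id)$ independent of $\mathcal{F}_{a-1}$. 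In particular $\|\mathbf{w}_a\|^2\mid\mathcal{F}_{a-1}\sim\chi^2_{N-a+1}$, $w_{ab}\mid\mathcal{F}_{a-1}\sim\mc{N}(0,(P_a)_{bb})$ with $(P_a)_{bb}\in[0,1]$, and $\Delta_{ab}=(\Pi_{a-1}\mathbf{y}_a)_b\mid\mathcal{F}_{a-1}\sim\mc{N}(0,(\Pi_{a-1})_{bb})$.

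Concretely, reading \eqref{eqn:delta_replacement_good_event} left to right and top to bottom: the bound $\max_{a,b}|y_{ab}|\le\log N$ is immediate from the standard Gaussian tail and a union bound over $N^2$ entries. For the inner-product bound, the diagonal terms are $\|\mathbf{y}_a\|^2/N$ with $\|\mathbf{y}_a\|^2\sim\chi^2_N$, controlled by standard $\chi^2$ concentration, while for the off-diagonal terms we first intersect with the auxiliary event $\{\max_a\|\mathbf{y}_a\|^2\le 2N\}$ (which fails with probability $\OO(Ne^{-cN})$) and then use that $\ip{\mathbf{y}_a,\mathbf{y}_b}\mid\mathbf{y}_b$ is centered Gaussian with variance $\|\mathbf{y}_b\|^2\le 2N$. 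Since $\mathbf{w}_a$ is a projection of $\mathbf{y}_a$, the bound $\max_a\|\mathbf{w}_a\|^2\le N\log N$ follows from $\|\mathbf{w}_a\|^2\le\|\mathbf{y}_a\|^2$. For the bound on $L_a$: because $a-1\le k=\oo(\sqrt N)$, the shape parameter $N-a+1$ differs from $N$ by $\oo(\sqrt N\log N)$, so $\chi^2$ concentration with deviation parameter $(1-\oo(1))(\log N)^2$ gives $|\|\mathbf{w}_a\|^2-N|\le(2-\oo(1))\sqrt N\log N$ off an event of probability $\OO(e^{-c(\log N)^2})$, and a first-order expansion of $t\mapsto t^{-1/2}$ at $t=1$ converts this into $L_a\le\log N/\sqrt N$. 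The bound $\max_{a,b}|w_{ab}|\le 2\log N$ follows from $w_{ab}\mid\mathcal{F}_{a-1}$ being centered Gaussian with variance at most $1$, plus a union bound over $N^2$ pairs. Finally, for the $\bm{\Delta}$-bound: on the event that the previous bounds already hold, we have, for $\ell<a\le k$, that $\gamma_{\ell b}^2=w_{\ell b}^2/\|\mathbf{w}_\ell\|^2=\OO((\log N)^2/N)$, hence the conditional variance $(\Pi_{a-1})_{bb}=\sum_{\ell<a}\gamma_{\ell b}^2=\OO(k(\log N)^2/N)$, so a Gaussian tail bound gives $\P(|\Delta_{ab}|\ge(\log N)^2\sqrt{k/N}\mid\mathcal{F}_{a-1})=\OO(e^{-c(\log N)^2})$ uniformly, and a union bound over the at most $N^2$ pairs $(a,b)$ closes this case.

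Assembling, one peels off the six events in the order above, estimating each on the event that the earlier ones (and the auxiliary norm event) already hold — this accommodates the dependence of the $\bm{\Delta}$-bound on the $L_a$- and $w_{ab}$-bounds, of the $\|\mathbf{w}_a\|^2$-bound on the inner-product bound, and of the off-diagonal inner-product bound on $\{\|\mathbf{y}_a\|^2\le 2N\}$ — and each of the at most six resulting terms is $\OO(e^{-c(\log N)^2})$, giving the claim. The one point that is more than a mechanical tail estimate is the $\bm{\Delta}$-bound: it requires first knowing that the Gram--Schmidt columns delocalize, $\vertiii{\bm{\gamma}_\ell}=\OO(\log N/\sqrt N)$, in order to control the conditional variance $(\Pi_{a-1})_{bb}$ by $\OO(k(\log N)^2/N)$ rather than merely by $1$; this bootstrap is exactly where the input from \cite{Jia2005} enters. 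The rank restriction $k=\OO(N^{1/2-\delta})$ is used only mildly here, through $N-a+1=N+\oo(\sqrt N\log N)$ in the bound on $L_a$.
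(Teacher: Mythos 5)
Your proof is correct, but it takes a more self-contained route than the paper. The paper's proof of this lemma is deliberately short: the $L$- and $\bm{\Delta}$-events are imported wholesale from Proposition \ref{prop:jiang_bounds} (i.e.\ from Lemmas 3.5 and 3.6 of \cite{Jia2005}), the two $w$-events are absorbed deterministically via $\|\mathbf{w}_a\|\leq\|\mathbf{y}_a\|$ and $\abs{w_{ab}}\leq\abs{y_{ab}}+\abs{\Delta_{ab}}$, and the remaining purely Gaussian events are handled by Cram\'er's theorem for the sample means $\frac1N\sum_b y_{1b}y_{2b}$ and $\frac1N\sum_b y_{1b}^2$. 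You instead prove everything directly from the filtration structure of Gram--Schmidt: $\|\mathbf{w}_a\|^2\mid\mc{F}_{a-1}\sim\chi^2_{N-a+1}$ gives the $L_a$-bound by $\chi^2$ concentration (essentially reproving Jiang's Lemma 3.6), conditional Gaussian tails with variance $(P_a)_{bb}\leq 1$ give the $w_{ab}$-bound for all $a\leq N$ without any detour through $\bm{\Delta}$, conditioning on $\mathbf{y}_b$ (plus the auxiliary event $\|\mathbf{y}_b\|^2\leq 2N$) replaces Cram\'er for the off-diagonal inner products, and the bootstrap $(\Pi_{a-1})_{bb}=\sum_{\ell<a}\gamma_{\ell b}^2=\OO(k(\log N)^2/N)$ on the $\mc{F}_{a-1}$-measurable part of the good event gives the $\bm{\Delta}$-bound (essentially the mechanism inside Jiang's Lemma 3.5). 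This is exactly the kind of ``formally independent proof'' the authors say in Remark \ref{rem:jiang_relationship} could be written; what your version buys is self-containedness and a uniform treatment of all six events (including the $w$-events for the full range $a\leq N$ as stated), at the cost of redoing arguments the paper can simply cite; the paper's version buys brevity. The only points requiring care in your write-up — that the peeling for the $\bm{\Delta}$-bound must intersect only with the $\mc{F}_{a-1}$-measurable restriction of the earlier events, and that the $\chi^2_{N-a+1}$ mean shift $a-1=\oo(\sqrt N)$ is negligible — are both handled correctly.
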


The proof of this will be given after the following proposition, which collects some estimates of Jiang \cite{Jia2005}, and uses the error variable
\[
	\epsilon_N(k) \defeq \max_{1 \leq i \leq k, 1 \leq j \leq N} \abs{\sqrt{N}\gamma_{ij} - y_{ij}}.
\]
\begin{prop}
\label{prop:jiang_bounds}
Fix some sequence $(k=k_N)_{N=1}^\infty$ of positive integers satisfying $k = \OO(N^{1/2})$. 
Then for some small $c > 0$ we have
\begin{align}
	\P\left(\max_{i=1}^k \vertiii{\bm{\Delta}_i} > (\log N)^2 \sqrt{\frac{k}{N}} \right) &= \OO(e^{-c(\log N)^2}), \label{eqn:jiang_delta_bound} \\
	\P\left(\max_{i=1}^k L_i > \frac{\log N}{\sqrt{N}}\right) &= \OO(e^{-c(\log N)^2}), \label{eqn:jiang_L_bound} \\
	\P\left(\epsilon_N(k) > (\log N)^3 \sqrt{\frac{k}{N}} \right) &= \OO(e^{-c(\log N)^2}). \label{eqn:jiang_epsilon_bound}
\end{align}
\end{prop}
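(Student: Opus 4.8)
The three bounds are borrowed from Jiang \cite{Jia2005}; the plan is to prove them together, treating the entrywise bound \eqref{eqn:jiang_delta_bound} on $\bm{\Delta}_i$ as the crux and deducing \eqref{eqn:jiang_L_bound} and \eqref{eqn:jiang_epsilon_bound} from it. I would start from the purely Gaussian high-probability events: by the standard Gaussian tail bound, $\chi^2$-concentration, and a product estimate, with probability $1 - \OO(e^{-c(\log N)^2})$ one has $\max_{a,b=1}^N \abs{y_{ab}} \le \log N$, $\max_{a=1}^N \abs{\norm{\mathbf{y}_a}^2 - N} \le \sqrt{N}\log N$, and $\max_{a \neq b} \abs{\ip{\mathbf{y}_a, \mathbf{y}_b}} \le \sqrt{N}\log N$, the union bound over $\OO(N^2)$ indices being absorbed into the exponential. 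Next I would control $\norm{\bm{\Delta}_i}$: since $\bm{\Delta}_i$ is the orthogonal projection of $\mathbf{y}_i$ onto the $(i-1)$-dimensional subspace spanned by $\mathbf{y}_1,\ldots,\mathbf{y}_{i-1}$ (equivalently by $\bm{\gamma}_1,\ldots,\bm{\gamma}_{i-1}$), which is independent of $\mathbf{y}_i$, one has $\norm{\bm{\Delta}_i}^2 \sim \chi^2_{i-1}$ together with the identity $\ip{\mathbf{y}_i,\bm{\Delta}_i} = \norm{\bm{\Delta}_i}^2$; $\chi^2$-concentration and a union bound over $i \le k$ give $\max_{i \le k}\norm{\bm{\Delta}_i}^2 = \oo(N/\log N)$ on a good event. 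Because $\norm{\mathbf{w}_i}^2 = \norm{\mathbf{y}_i - \bm{\Delta}_i}^2 = \norm{\mathbf{y}_i}^2 - \norm{\bm{\Delta}_i}^2$ and $k = \OO(N^{1/2})$, on this event $\norm{\mathbf{w}_i}^2 = N(1 + \OO(\log N/\sqrt{N}))$, and a direct computation yields \eqref{eqn:jiang_L_bound} as well as the lower bound $\norm{\mathbf{w}_i} \ge \sqrt{N/2}$ for $i \le k$.

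The heart of the argument is \eqref{eqn:jiang_delta_bound}, which I would prove by induction on $i$, using the filtration structure of Gram--Schmidt. Writing $\mathcal{F}_{i-1} := \sigma(\mathbf{y}_1,\ldots,\mathbf{y}_{i-1})$ and $\Delta_{ij} = \sum_{\ell=1}^{i-1}\ip{\mathbf{y}_i,\bm{\gamma}_\ell}\gamma_{\ell j}$, the key observation is that, conditionally on $\mathcal{F}_{i-1}$, the coefficients $(\ip{\mathbf{y}_i,\bm{\gamma}_\ell})_{\ell < i}$ are i.i.d.\ standard Gaussians — because $\bm{\gamma}_1,\ldots,\bm{\gamma}_{i-1}$ are orthonormal and $\mathbf{y}_i$ is independent of $\mathcal{F}_{i-1}$ — so $\Delta_{ij}$ is, conditionally on $\mathcal{F}_{i-1}$, centered Gaussian with variance $v_{ij} := \sum_{\ell < i}\gamma_{\ell j}^2$. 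The induction hypothesis is that, on the good event, $\max_j \abs{\Delta_{\ell j}} \le (\log N)^2\sqrt{k/N}$ for all $\ell < i$; together with the Gaussian inputs and $\norm{\mathbf{w}_\ell} \ge \sqrt{N/2}$ this forces $\abs{\gamma_{\ell j}} = \abs{y_{\ell j} - \Delta_{\ell j}}/\norm{\mathbf{w}_\ell} \le 2\log N/\sqrt{N}$ (here $(\log N)^2\sqrt{k/N} = \oo(\log N)$ since $k = \OO(N^{1/2})$), hence $v_{ij} \le 4k(\log N)^2/N$. As $v_{ij}$ is $\mathcal{F}_{i-1}$-measurable, the Gaussian tail gives $\P(\abs{\Delta_{ij}} \ge (\log N)^2\sqrt{k/N} \mid \mathcal{F}_{i-1}) \le 2e^{-(\log N)^2/8}$ on this event, and a union bound over $j \le N$ and $i \le k$ closes the induction. (A minor bookkeeping point: when conditioning on $\mathcal{F}_{i-1}$ one should use the restrictions of the Step-1 events to row indices $\le i-1$, which are $\mathcal{F}_{i-1}$-measurable; this changes nothing.)

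Finally, \eqref{eqn:jiang_epsilon_bound} is immediate: from $\sqrt{N}\gamma_{ij} - y_{ij} = \big(\tfrac{\sqrt{N}}{\norm{\mathbf{w}_i}} - 1\big)y_{ij} - \tfrac{\sqrt{N}}{\norm{\mathbf{w}_i}}\Delta_{ij}$ together with $L_i \le \log N/\sqrt{N}$, $\sqrt{N}/\norm{\mathbf{w}_i} \le 2$, $\abs{y_{ij}} \le \log N$, and \eqref{eqn:jiang_delta_bound}, one gets $\abs{\sqrt{N}\gamma_{ij} - y_{ij}} \le 3(\log N)^2\sqrt{k/N} \le (\log N)^3\sqrt{k/N}$ for $N$ large. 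The main obstacle is the inductive step for \eqref{eqn:jiang_delta_bound}: the naive bound $\abs{\Delta_{ij}} \le \sum_{\ell < i}\abs{\ip{\mathbf{y}_i,\bm{\gamma}_\ell}}\,\abs{\gamma_{\ell j}} \lesssim k \cdot \sqrt{\log N}\cdot \log N/\sqrt{N}$ loses a factor $\sqrt{k}$; one recovers the correct $\sqrt{k/N}$ scaling only by exploiting that $\Delta_{ij}$ is conditionally Gaussian with the small variance $v_{ij} = \sum_{\ell<i}\gamma_{\ell j}^2 \lesssim k(\log N)^2/N$, i.e.\ by measuring the $\ell$-sum in $\ell^2$ rather than $\ell^1$, which is exactly where the martingale structure of Gram--Schmidt (the $\bm{\gamma}_\ell$ for $\ell<i$ being $\mathcal{F}_{i-1}$-measurable with $\mathbf{y}_i$ independent of $\mathcal{F}_{i-1}$) enters; the induction on $i$ is then what disentangles the apparent circularity between this $\bm{\Delta}$-bound and the delocalization of the $\bm{\gamma}_\ell$'s.
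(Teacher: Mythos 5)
Your proposal is correct, but it takes a genuinely different route from the paper. The paper's own proof of Proposition \ref{prop:jiang_bounds} is essentially a citation: it quotes Jiang's quantitative tail bounds verbatim -- \cite[Lemma 3.5]{Jia2005} for \eqref{eqn:jiang_delta_bound} (the bound \eqref{eqn:jiang_complete_delta_bound}), \cite[Lemma 3.6]{Jia2005} for \eqref{eqn:jiang_L_bound}, and \cite[Theorem 5]{Jia2005} for \eqref{eqn:jiang_epsilon_bound} -- and then simply substitutes $t=(\log N)^2\sqrt{k/N}$, $r=\log N/\sqrt N$, $s=\log N$ and checks that the resulting expressions are $\OO(e^{-c(\log N)^2})$. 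You instead give a self-contained proof: $\chi^2$-concentration for $\norm{\mathbf{y}_i}^2$ and for $\norm{\bm{\Delta}_i}^2\sim\chi^2_{i-1}$ (via the Pythagorean identity $\norm{\mathbf{w}_i}^2=\norm{\mathbf{y}_i}^2-\norm{\bm{\Delta}_i}^2$) yields \eqref{eqn:jiang_L_bound} and $\norm{\mathbf{w}_i}\ge\sqrt{N/2}$; the entrywise bound \eqref{eqn:jiang_delta_bound} is then proved by induction on $i$, exploiting that, conditionally on $\mathcal{F}_{i-1}$, $\Delta_{ij}$ is centered Gaussian with variance $\sum_{\ell<i}\gamma_{\ell j}^2\lesssim k(\log N)^2/N$ (measuring the $\ell$-sum in $\ell^2$ rather than $\ell^1$ is exactly what avoids the lossy factor $\sqrt k$); and \eqref{eqn:jiang_epsilon_bound} follows from the decomposition $\sqrt N\gamma_{ij}-y_{ij}=(\sqrt N/\norm{\mathbf{w}_i}-1)y_{ij}-(\sqrt N/\norm{\mathbf{w}_i})\Delta_{ij}$. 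The measurability bookkeeping you flag (restricting the good events to rows $\le i-1$, and a first-failure-index decomposition to close the induction) is the right way to make the conditioning rigorous, and the constants work out because the square root in $L_i$ halves the relative fluctuation of $\norm{\mathbf{w}_i}^2$. What each approach buys: the paper's version is two lines per estimate and leans on the literature (the authors note in Remark \ref{rem:jiang_relationship} that the borrowing is for convenience only), whereas yours is independent of \cite{Jia2005}, makes the filtration/martingale mechanism of Gram--Schmidt explicit -- the same structural fact the paper exploits later in Section \ref{sec:hard} -- and delivers bounds of the same strength under the same hypothesis $k=\OO(N^{1/2})$.
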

\begin{proof}
\begin{itemize}
\item[\eqref{eqn:jiang_delta_bound}] From \cite[Lemma 3.5]{Jia2005}, we have
\begin{equation}
\label{eqn:jiang_complete_delta_bound}
	\P\left(\max_{i=1}^k \vertiii{\bm{\Delta}_i} \geq t\right) \leq \frac{3kN}{t} \left(1+\frac{t^2}{3(k+t\sqrt{N})}\right)^{-N/2}
\end{equation}
for any $t > 0$. From our choice $t = (\log N)^2(k/N)^{1/2}$ in \eqref{eqn:jiang_delta_bound}, this is at most 
\[
	CN^2\left(1+\frac{(\log N)^4(k/N)}{3(k+k(\log N)^2)}\right)^{-N/2} \leq CN^2 \left(1+\frac{(\log N)^2}{CN}\right)^{-N/2} = \OO(e^{-c(\log N)^2})
\]
for some $C > 0$.
\item[\eqref{eqn:jiang_L_bound}] From \cite[Lemma 3.6]{Jia2005}, we have
\[
	\P\left(\max_{i=1}^k L_i \geq r\right) \leq 4ke^{-Nr^2/16}
\]
whenever $r \in (0,1/4)$, $k \leq (r/2)N$. We take $r=N^{-1/2}(\log N)$.
\item[\eqref{eqn:jiang_epsilon_bound}] From \cite[Theorem 5]{Jia2005}, we have\footnote{
In writing Theorem 5, Jiang writes the last denominator as $3(k+\sqrt{N})$ instead of $3(k+t\sqrt{N})$, which suffices for his purposes. But indeed he proves the sharper result with $3(k+t\sqrt{N})$, as can be seen by comparing his proof of this result with the statement of his Lemma 3.5.
}
\begin{equation}
\label{eqn:jiang_epsilonNk_bound}
	\P(\epsilon_N(k) \geq rs+2t) \leq 4ke^{-Nr^2/16} + 3kN\left(\frac{1}{s}e^{-s^2/2} + \frac{1}{t}\left(1+\frac{t^2}{3(k+t\sqrt{N})}\right)^{-N/2}\right)
\end{equation}
whenever $r \in (0,1/4)$, $s > 0$, $t > 0$, and $k \leq (r/2)N$. 
We take, say, $r = \frac{\log N}{\sqrt{N}}$, $s = \log N$, and 
$t = (\log N)^2 \sqrt{k/N}$.
\end{itemize}
\end{proof}

\begin{proof}[Proof of Lemma \ref{lem:delta_replacement_good_event}]
The $L$ and $\Delta$ terms follow immediately from Proposition \ref{prop:jiang_bounds}. The two events in \eqref{eqn:delta_replacement_good_event} involving $w$'s are actually included in the other four events: this is true for the vectors since $\mathbf{w}_a$ is a projection of $\mathbf{y}_a$ and hence $\|\mathbf{w}_a\| \leq \|\mathbf{y}_a\|$, and true for the scalars since $\abs{w_{ab}} \leq \abs{y_{ab}} + \abs{\Delta_{ab}}$, both deterministically. So only the basic Gaussian events remain to be proven, which are routine: Of course
\[
	\P\left(\max_{a,b=1}^N \abs{y_{ab}} \geq \log N\right) \leq N^2\P(\abs{y_{11}} \geq \log N) \leq N^2\exp(-(\log N)^2/2),
\]
but also
\begin{align*}
	\P\left(\max_{\substack{a,b=1 \\ a \neq b}}^N \abs{\ip{\mathbf{y}_a,\mathbf{y}_b}} \geq N^{1/2} \log N\right) &\leq 2N^2 \P\left( \frac{\sum_{b=1}^N y_{1b}y_{2b}}{N} \geq \frac{\log N}{N^{1/2}} \right), \\
	\P\left(\max_{a=1}^N \|\mathbf{y}_a\|^2 \geq N \log N \right) &\leq N\P\left(\frac{\sum_{b=1}^N y_{1b}^2}{N} \geq \log N \right).
\end{align*}
The two probabilities on the right-hand sides of the above display are each superpolynomially small (precisely, the first is at most $2\exp(-(\log N)^2)$, and the second is at most $2\exp(-N\log N/2)$). Indeed, they describe large deviations events for the sample averages of $N$ independent variables (distributed as the product of two independent Gaussians in the first case, and as $\chi^2_1$ in the second case), which fall under Cram\'er's theorem, the upper bound of which is valid at finite $N$. The rate function in the first case is 
\[
I(x) = \frac{1}{2}\Big(-1+\sqrt{1+4x^2}+\log\big(\frac{-1+\sqrt{1+4x^2}}{2x^2}\big)\Big) \geq x^2,
\] which is convex, vanishes at zero, and is at least $N^{-1} (\log N)^2$ when evaluated at $x = N^{-1/2}\log N$.
 The rate function in the second case is $I(x) = \frac{1}{2}(x-1-\log x)$, evaluated at $x = \log N$.
\end{proof}

%%%%%%%%%%%%%%%%%%%%%%%%%%%%%%%%%%%%%%%%%%%%%%%%%%%%%%%%%%%%
%%%%%%%%             Subsection: Proofs of the simpler bounds (eq), (eq), (eq), and (eq)
%%%%%%%%%%%%%%%%%%%%%%%%%%%%%%%%%%%%%%%%%%%%%%%%%%%%%%%%%%%%

\subsection{Proofs of the simpler bounds \eqref{eqn:deltadelta1-prob}, \eqref{eqn:delta1delta2-prob}, \eqref{eqn:delta2delta3-prob}, and \eqref{eqn:delta3delta4-prob}}\label{sec:easy}

\begin{proof}[Proof of \eqref{eqn:deltadelta1-prob}]
Since
\[
	\gamma_{\ell j} \gamma_{\ell q} - \frac{1}{N} w_{\ell j} w_{\ell q} = w_{\ell j} w_{\ell q} \left(\frac{1}{\|\mathbf{w}_\ell\|^2} - \frac{1}{N}\right),
\]
and the $\mathbf{w}_i$'s are orthogonal, we compute
\begin{align*}
	N(\sigma_{ij}^{(0,1)})^2 &= N((M^{(i)} - M^{(1,i)})(M^{(i)} - M^{(1,i)})^T)_{jj} = N \sum_{q=1}^N ((M^{(i)} - M^{(1,i)})_{jq})^2 \\
	&= N \sum_{q=1}^N \sum_{\ell=1}^{i-1} \sum_{\ell'=1}^{i-1} w_{\ell j} w_{\ell q} w_{\ell' j} w_{\ell' q} \left( \frac{1}{\|\mathbf{w}_\ell\|^2} - \frac{1}{N} \right) \left( \frac{1}{\|\mathbf{w}_\ell'\|^2} - \frac{1}{N} \right) \\
	&= N \sum_{\ell=1}^{i-1} w_{\ell j}^2 \|\mathbf{w}_\ell\|^2 \left(\frac{1}{\|\mathbf{w}_\ell\|^2} - \frac{1}{N} \right)^2.
\end{align*}
Notice that 
\[
	\abs{\frac{1}{\|\mathbf{w}_{\ell}\|^2} - \frac{1}{N}} = \frac{1}{N} \abs{\frac{N}{\|\mathbf{w}_\ell\|^2} - 1} = \frac{1}{N} \abs{ \sqrt{\frac{N}{\|\mathbf{w}_\ell\|^2}} - 1 } \abs{ \sqrt{\frac{N}{\|\mathbf{w}_\ell\|^2}} + 1 } \leq \frac{L_\ell(L_\ell+2)}{N},
\]
so that on the good event $\mc{E}_{k,N}$ from \eqref{eqn:delta_replacement_good_event}, we have
\[
	\max_{i=1}^k \max_{j=1}^N (N(\sigma_{ij}^{(0,1)})^2) \leq Nk(2\log N)^2 (\log N) N \left(\frac{3\log N}{N^{3/2}}\right)^2 \leq \frac{100k (\log N)^5}{N} \leq N^{-1/2},
\]
so that
\begin{equation}
\label{eqn:sigma_on_a_good_event}
	\max_{i=1}^k \max_{j=1}^N \P\left(N(\sigma_{ij}^{(0,1)})^2 Z^2 \geq \frac{1}{100}\right) \leq \P(N^{-1/2} Z^2 \geq 1/100) + \P((\mc{E}_{k,N})^c)).
\end{equation}
The first term on the right-hand side is $\OO(\exp(-c\sqrt{N}))$, and the second term is $\OO(e^{-c(\log N)^2})$ from Lemma \ref{lem:delta_replacement_good_event}; combined with the general estimate \eqref{eqn:delta_replacement_starting_point}, this finishes the proof (and gives a better estimate than we need).
\end{proof}

\begin{proof}[Proof of \eqref{eqn:delta1delta2-prob}]
We compute
\begin{align*}
	N(\sigma_{ij}^{(1,2)})^2 &= N\sum_{q=1}^N ((M^{(1,i)}-M^{(2,i)})_{jq})^2 = \frac{1}{N} \sum_{q=1}^N \left(\sum_{\ell=1}^{i-1} (w_{\ell j} - y_{\ell j})w_{\ell q} \right)^2 \\
	&= \frac{1}{N} \sum_{\ell=1}^{i-1} \sum_{\ell'=1}^{i-1} \sum_{q=1}^N \Delta_{\ell j} \Delta_{\ell' j} w_{\ell q} w_{\ell' q} = \frac{1}{N} \sum_{\ell=1}^{i-1} \sum_{\ell'=1}^{i-1} \Delta_{\ell j} \Delta_{\ell' j} \ip{\mathbf{w}_{\ell},\mathbf{w}_{\ell'}} = \frac{1}{N} \sum_{\ell=1}^{i-1} \Delta_{\ell j}^2 \|\mathbf{w}_\ell\|^2.
\end{align*}
On the good event $\mc{E}_{k,N}$ from \eqref{eqn:delta_replacement_good_event}, we have
\[
	\max_{i=1}^k \max_{j=1}^N N(\sigma_{ij}^{(1,2)})^2 \leq N^{-1} k (N^{-1/4})^2 N \log N \leq \left(\frac{k}{\sqrt{N}}\right) \log N.
\]
The rest of the proof is as in \eqref{eqn:sigma_on_a_good_event}.
\end{proof}

\begin{proof}[Proof of \eqref{eqn:delta2delta3-prob}]
We compute
\begin{align*}
	N(\sigma_{ij}^{(2,3)})^2 &= N\sum_{q=1}^N ((M^{(2,i)}-M^{(3,i)})_{jk})^2 = \frac{1}{N} \sum_{q=1}^N \left(\sum_{\ell=1}^{i-1} y_{\ell j} (\Delta_{\ell q} - \Delta^{(2)}_{\ell q}) \right)^2 \\
	&= \frac{1}{N} \sum_{\ell=1}^{i-1} \sum_{\ell'=1}^{i-1} \sum_{q=1}^N y_{\ell j} y_{\ell' j} (\Delta_{\ell q} - \Delta^{(2)}_{\ell q}) (\Delta_{\ell' q} - \Delta^{(2)}_{\ell' q}) \\
	&= \frac{1}{N} \sum_{\ell=1}^{i-1} \sum_{\ell'=1}^{i-1} y_{\ell j} y_{\ell' j} \ip{\bm{\Delta}_{\ell} - \bm{\Delta}^{(2)}_{\ell},\bm{\Delta}_{\ell'} - \bm{\Delta}^{(2)}_{\ell'}}.
\end{align*}
Consider the good event
\[
	\mc{E}^{(0,2)}_{k,N} := \left\{\max_{i=1}^k \max_{j=1}^N \abs{\Delta_{ij} - \Delta^{(2)}_{ij}} \leq \frac{1}{N^{1/2}}\right\}.
\]
From \eqref{eqn:deltadelta1-prob} and \eqref{eqn:delta1delta2-prob} we see that $P((\mc{E}^{(0,2)}_{k,N})^c) = \OO_D(N^{-D})$ for any $D$. On $\mc{E}^{(0,2)}_{k,N}$ we have 
\[
	\max_{\ell,\ell'=1}^k \abs{ \ip{\bm{\Delta}_{\ell} - \bm{\Delta}^{(2)}_{\ell},\bm{\Delta}_{\ell'} - \bm{\Delta}^{(2)}_{\ell'}} } \leq 1,
\]
so that, on $\mc{E}^{(0,2)}_{k,N} \cap \mc{E}_{k,N}$, we have 
\[
	\max_{i=1}^k \max_{j=1}^N N(\sigma_{ij}^{(2,3)})^2 \leq \left(\frac{k}{\sqrt{N}}\right)^2 (\log N)^2.
\]
The rest of the proof is as in \eqref{eqn:sigma_on_a_good_event}.
\end{proof}

The proof of \eqref{eqn:delta3delta4-prob} relies on the following result.

\begin{lem}
\label{lem:delta2deltatilde}
If $\delta > 0$, $k = \OO(N^{1/2-\delta})$, and $p \geq 1$ is an integer, then
\[
	\max_{\ell, \ell'=1}^{k} \E\left[\ip{\bm{\Delta}^{(2)}_\ell - \widetilde{\bm{\Delta}}_\ell, \bm{\Delta}^{(2)}_{\ell'} - \widetilde{\bm{\Delta}}_{\ell'}}^{2p}\right] \leq C_p \left(\frac{k}{\sqrt{N}}\right)^{4p} \le C_p N^{-4p\delta}.
\]
\end{lem}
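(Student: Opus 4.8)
The plan is to reduce the claim, by two applications of Cauchy--Schwarz, to a single moment bound on $\bm{v}_\ell := \bm{\Delta}^{(2)}_\ell - \widetilde{\bm{\Delta}}_\ell$. Using $\mathbf{w}_m = \mathbf{y}_m - \bm{\Delta}_m$, one has
\[
	\bm{v}_\ell = \frac{1}{N}\sum_{m=1}^{\ell-1}\ip{\mathbf{y}_\ell,\mathbf{w}_m-\mathbf{y}_m}\mathbf{y}_m = -\frac{1}{N}\sum_{m=1}^{\ell-1}\ip{\mathbf{y}_\ell,\bm{\Delta}_m}\mathbf{y}_m = -\frac{1}{N}B_{\ell-1}\mathbf{y}_\ell,
\]
where $B_{\ell-1} := \sum_{m=1}^{\ell-1}\mathbf{y}_m\bm{\Delta}_m^T$. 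Because $\bm{\Delta}_m$ depends only on $\mathbf{y}_1,\dots,\mathbf{y}_{m-1}$ for $m\le\ell-1$, the matrix $B_{\ell-1}$ is measurable with respect to $\mathcal{F}_{\ell-1}:=\sigma(\mathbf{y}_1,\dots,\mathbf{y}_{\ell-1})$, while $\mathbf{y}_\ell$ is independent of $\mathcal{F}_{\ell-1}$; this is the martingale-type structure of the Gram--Schmidt procedure that makes the computation tractable. Since $\ip{\bm{v}_\ell,\bm{v}_{\ell'}}^{2p}\le\|\bm{v}_\ell\|^{2p}\|\bm{v}_{\ell'}\|^{2p}$ and then $\E[\|\bm{v}_\ell\|^{2p}\|\bm{v}_{\ell'}\|^{2p}]\le(\E\|\bm{v}_\ell\|^{4p})^{1/2}(\E\|\bm{v}_{\ell'}\|^{4p})^{1/2}$, it is enough to prove $\E[\|\bm{v}_\ell\|^{4p}]\le C_p(k/\sqrt{N})^{4p}$ for each $\ell\le k$; the claimed bound $(k/\sqrt{N})^{4p}=\OO(N^{-4p\delta})$ then follows from $k=\OO(N^{1/2-\delta})$.

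For the moment bound on $\|\bm{v}_\ell\|$, condition on $\mathcal{F}_{\ell-1}$: then $\|\bm{v}_\ell\|^2=\frac{1}{N^2}\mathbf{y}_\ell^TB_{\ell-1}^TB_{\ell-1}\mathbf{y}_\ell$ is a quadratic form in the standard Gaussian vector $\mathbf{y}_\ell$ with a positive semidefinite, $\mathcal{F}_{\ell-1}$-measurable kernel. For any positive semidefinite $M$, diagonalizing and using Minkowski's inequality (on $\mathbf{y}_\ell^TM\mathbf{y}_\ell=\sum_i\mu_iz_i^2$ with the $z_i$ i.i.d.\ standard Gaussian) gives $\E[(\mathbf{y}_\ell^TM\mathbf{y}_\ell)^{2p}]\le C_p(\Tr M)^{2p}$; applied conditionally this yields $\E[\|\bm{v}_\ell\|^{4p}\mid\mathcal{F}_{\ell-1}]\le C_p\bigl(\tfrac{1}{N^2}\|B_{\ell-1}\|_{\mathrm{HS}}^2\bigr)^{2p}$. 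Hence the whole matter reduces to showing $\E[\|B_{\ell-1}\|_{\mathrm{HS}}^{4p}]\le C_pN^{2p}k^{4p}$.

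This is where the only genuine subtlety sits. The triangle inequality $\|B_{\ell-1}\|_{\mathrm{HS}}\le\sum_m\|\mathbf{y}_m\|\,\|\bm{\Delta}_m\|$ is too crude: it throws away the near-orthogonality of the rank-one blocks $\mathbf{y}_m\bm{\Delta}_m^T$ and loses an extra factor $k^{2p}$. Instead I would factor $B_{\ell-1}=Y_{\ell-1}D_{\ell-1}^T$, where $Y_{\ell-1}$ and $D_{\ell-1}$ are the $N\times(\ell-1)$ matrices with columns $\mathbf{y}_1,\dots,\mathbf{y}_{\ell-1}$ and $\bm{\Delta}_1,\dots,\bm{\Delta}_{\ell-1}$ respectively, and use $\|B_{\ell-1}\|_{\mathrm{HS}}\le\|Y_{\ell-1}\|\,\|D_{\ell-1}\|_{\mathrm{HS}}$. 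For the first factor, $Y_{\ell-1}$ is a Gaussian matrix with only $\ell-1\le k\ll N$ columns, so the standard non-asymptotic bound on its top singular value combined with Gaussian Lipschitz concentration gives $\E[\|Y_{\ell-1}\|^{8p}]\le C_pN^{4p}$ --- the crucial point being that $\|Y_{\ell-1}\|$ is of order $\sqrt{N}$, not $\sqrt{Nk}$. For the second factor, $\|D_{\ell-1}\|_{\mathrm{HS}}^2=\sum_{m=1}^{\ell-1}\|\bm{\Delta}_m\|^2$ and $\bm{\Delta}_m=P_{m-1}\mathbf{y}_m$ is the orthogonal projection of $\mathbf{y}_m$ onto $\mathrm{span}(\mathbf{y}_1,\dots,\mathbf{y}_{m-1})$, with $P_{m-1}$ being $\mathcal{F}_{m-1}$-measurable; conditioning on $\mathcal{F}_{m-1}$, $\|\bm{\Delta}_m\|^2$ is a $\chi^2$ with at most $m-1\le k$ degrees of freedom, so $\E[\|\bm{\Delta}_m\|^{8p}]\le C_pk^{4p}$ and, by convexity across the $m$-sum, $\E[\|D_{\ell-1}\|_{\mathrm{HS}}^{8p}]\le C_pk^{8p}$. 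Cauchy--Schwarz then gives $\E[\|B_{\ell-1}\|_{\mathrm{HS}}^{4p}]\le(\E[\|Y_{\ell-1}\|^{8p}])^{1/2}(\E[\|D_{\ell-1}\|_{\mathrm{HS}}^{8p}])^{1/2}\le C_pN^{2p}k^{4p}$, exactly as needed.

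The main obstacle is not any individual estimate but getting the power of $k$ sharp: the factorization $B_{\ell-1}=Y_{\ell-1}D_{\ell-1}^T$ together with the $\OO(\sqrt{N})$ (rather than $\OO(\sqrt{Nk})$) operator-norm bound for the tall Gaussian matrix $Y_{\ell-1}$ is precisely what recovers the factor a term-by-term estimate would waste. Everything else is routine bookkeeping with $\chi^2$ moments and conditioning along the filtration $(\mathcal{F}_\ell)_\ell$.
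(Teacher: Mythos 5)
Your proof is correct, and it reaches the paper's bound $C_p(k/\sqrt{N})^{4p}$ by a genuinely different route. The paper expands $\ip{\bm{\Delta}^{(2)}_\ell-\widetilde{\bm{\Delta}}_\ell,\bm{\Delta}^{(2)}_{\ell'}-\widetilde{\bm{\Delta}}_{\ell'}}$ as the double sum $\frac{1}{N^2}\sum_{m,m'}\ip{\mathbf{y}_\ell,\bm{\Delta}_m}\ip{\mathbf{y}_{\ell'},\bm{\Delta}_{m'}}\ip{\mathbf{y}_m,\mathbf{y}_{m'}}$, proves the key input $\E[\ip{\mathbf{y}_\ell,\bm{\Delta}_m}^{2p}]\le C_p m^p$ by a Wick/pairing count that exploits the orthonormality of the $\bm{\gamma}_s$ (using $m<\ell$ to integrate $\mathbf{y}_m$ and $\mathbf{y}_\ell$ separately), and then closes with H\"older, splitting the diagonal $m=m'$ (size $N$) from the off-diagonal (size $\sqrt{N}$) terms. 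You instead reduce by Cauchy--Schwarz to $\E[\|\bm{v}_\ell\|^{4p}]\le C_p(k/\sqrt{N})^{4p}$, which is lossless here since the paper's bound is exactly of product type $\|\bm{v}_\ell\|\,\|\bm{v}_{\ell'}\|\sim k^2/N$, and then get that norm bound by conditioning on $\mathcal{F}_{\ell-1}$, the Gaussian quadratic-form estimate $\E[(\mathbf{y}^TM\mathbf{y})^{2p}]\le C_p(\Tr M)^{2p}$, and the factorization $B_{\ell-1}=Y_{\ell-1}D_{\ell-1}^T$ with $\|B\|_{\mathrm{HS}}\le\|Y\|\,\|D\|_{\mathrm{HS}}$; the operator-norm bound $\|Y_{\ell-1}\|\lesssim\sqrt{N}$ for the tall Gaussian matrix plays the role that the sharp estimate $\ip{\mathbf{y}_\ell,\bm{\Delta}_m}\lesssim\sqrt{m}$ plays in the paper, and the projection identity $\bm{\Delta}_m=P_{m-1}\mathbf{y}_m$ (giving conditional $\chi^2_{m-1}$ moments) replaces the orthonormality-based Wick count. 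Your route is more modular and avoids the combinatorics entirely, at the cost of importing the standard non-asymptotic bound $\E\|Y\|\le\sqrt{N}+\sqrt{k}$ plus Gaussian concentration; the paper's route is self-contained Gaussian calculus and rehearses the diagrammatic bookkeeping it needs anyway for Lemma \ref{lem:graphs}. One small slip: you write that $\bm{\Delta}_m$ depends only on $\mathbf{y}_1,\dots,\mathbf{y}_{m-1}$, whereas it also depends on $\mathbf{y}_m$ (it is $P_{m-1}\mathbf{y}_m$, as you later state correctly); the conclusion you actually use --- that $B_{\ell-1}$ is $\mathcal{F}_{\ell-1}$-measurable and independent of $\mathbf{y}_\ell$ --- is unaffected.
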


\begin{proof}[Proof of Lemma \ref{lem:delta2deltatilde}]
First we compute
\begin{align*}
	 \ip{\bm{\Delta}^{(2)}_{\ell} - \widetilde{\bm{\Delta}}_{\ell},\bm{\Delta}^{(2)}_{\ell'} - \widetilde{\bm{\Delta}}_{\ell'}} &= \frac{1}{N^2} \ip{ \sum_{m=1}^{\ell-1} \ip{\mathbf{y}_\ell,-\bm{\Delta}_m} \mathbf{y}_m, \sum_{m'=1}^{\ell'-1} \ip{\mathbf{y}_{\ell'}, -\bm{\Delta}_{m'}} \mathbf{y}_{m'}} \\
	 &= \frac{1}{N^2} \sum_{m=1}^{\ell-1} \sum_{m'=1}^{\ell'-1} \ip{\mathbf{y}_\ell,\bm{\Delta}_m} \ip{\mathbf{y}_{\ell'}, \bm{\Delta}_{m'}} \ip{\mathbf{y}_m, \mathbf{y}_{m'}} .
\end{align*}
To estimate this, we need to bound $\ip{\mathbf{y}_\ell, \bm{\Delta}_m}$ in high moments, when $m < \ell$. For any fixed $p$ we have
\begin{align*}
	\E[\ip{\mathbf{y}_\ell, \bm{\Delta}_m}^{2p}] = \E\left[\left(\sum_{s=1}^{m-1} \ip{\mathbf{y}_m,\bm{\gamma}_s} \ip{\mathbf{y}_\ell,\bm{\gamma}_s}\right)^{2p}\right] &= \sum_{s_1, \ldots, s_{2p} = 1}^{m-1} \E\left[ \prod_{a=1}^{2p} (\ip{\mathbf{y}_m, \bm{\gamma}_{s_a}} \ip{\mathbf{y}_{\ell}, \bm{\gamma}_{s_a}}) \right]
\end{align*}
and since $\ell > m > \max_{a=1}^{2p} s_a$, we can do the Wick theorem separately on $m$ and $\ell$. Write $P_{2p}$ for the set of partitions $\pi$ of $\{1, \ldots, 2p\}$ into $p$ pairs, and notate $\pi = (b_1, \ldots, b_p)$ with blocks $b_i = (s_1(b_i),s_2(b_i))$. Since $\ip{\bm{\gamma}_s,\bm{\gamma}_{s'}} = \delta_{ss'}$, we find
\[
	\E[\ip{\mathbf{y}_\ell, \bm{\Delta}_m}^{2p}] = \sum_{s_1, \ldots, s_{2p}=1}^{m-1} \left( \sum_{\pi = (b_1,\ldots,b_p) \in P_{2p}} \prod_{j=1}^p \delta_{s_1(b_j),s_2(b_j)} \right)^2 \eqdef \sum_{s_1, \ldots, s_{2p}=1}^{m-1} F_{s_1, \ldots, s_{2p}}.
\]
Notice that $F = F_{s_1, \ldots, s_{2p}}$ depends only on $p$ and on the cardinality of the set $\{s_1, \ldots, s_{2p}\}$, but not on the actual values of the $s_i$ themselves. Since this cardinality takes values in $\{1, \ldots, 2p\}$, there is some upper bound $F \leq C_p$ regardless of $m$ and $\ell$. Furthermore, $F_{s_1, \ldots, s_{2p}}$ vanishes unless this cardinality is at most $p$, i.e., it vanishes if any $s_i$ is alone. Thus $\E[\ip{\mathbf{y}_\ell,\bm{\Delta}_m}^{2p}]$ is at most $C_p$ times the number of ordered tuples $(s_1, \ldots, s_{2p})$ such that each $s_i$ takes values in $\{1, \ldots, m-1\}$ and such that the set $\mc{S} = \{s_1, \ldots, s_{2p}\}$ has cardinality at most $p$ (forgetting the requirement that no $s_i$ be alone, to get an upper bound). We can pick such a tuple in the following way: First choose $p$ distinct elements of $\{1, \ldots, m-1\}$, and fix this as an alphabet. Then select one element of this alphabet, $2p$ times: the first is called $s_1$, the second called $s_2$, and so on. This procedure produces $\binom{m-1}{p} p^{2p}$ tuples, which overcounts (since, e.g. when $p = 2$, the ordered tuple of all ones can be created both by the alphabet $\{1,2\}$ and by the alphabet $\{1, 3\}$), so
\[
	\E[\ip{\mathbf{y}_\ell,\bm{\Delta}_m}^{2p}] \leq C_p \binom{m-1}{p} \leq C_p m^p.
\]
Standard Gaussian estimates show
\begin{align*}
	\E[\|\mathbf{y}_m\|^{2p}] &\leq C_p N^p
\end{align*}
and, for $m \neq m'$,
\begin{align*}
	\E[\ip{\mathbf{y}_m,\mathbf{y}_{m'}}^{2p}] = \sum_{\pi = (b_1, \ldots, b_p) \in P_{2p}} \E[\|\mathbf{y}_m\|^{p}] \leq C_p N^p.
\end{align*}
Thus
\begin{align*}
	&\E\left[ \ip{\bm{\Delta}^{(2)}_{\ell} - \widetilde{\bm{\Delta}}_{\ell},\bm{\Delta}^{(2)}_{\ell'} - \widetilde{\bm{\Delta}}_{\ell'}}^{2p} \right] \\
	&= N^{-4p} \sum_{m_1, \ldots, m_{2p} = 1}^{\ell-1} \sum_{m'_1, \ldots, m'_{2p} = 1}^{\ell'-1} \E\left[\prod_{a=1}^{2p} \ip{\mathbf{y}_\ell,\bm{\Delta}_{m_a}} \ip{\mathbf{y}_{\ell'}, \bm{\Delta}_{m'_a}} \ip{\mathbf{y}_{m_a}, \mathbf{y}_{m'_a}} \right] \\
	&\leq \left( N^{-2} \sum_{m=1}^{\ell-1} \sum_{m'=1}^{\ell'-1} \E[\ip{\mathbf{y}_\ell, \bm{\Delta}_m}^{6p}]^{\frac{1}{6p}} \E[\ip{\mathbf{y}_{\ell'},\bm{\Delta}_{m'}}^{6p}]^{\frac{1}{6p}} \E[\ip{\mathbf{y}_m,\mathbf{y}_{m'}}^{6p}]^{\frac{1}{6p}} \right)^{2p} \\
	&\leq \left( N^{-2} \left(\sum_{m,m'=1}^{\min(\ell,\ell')-1} \delta_{mm'} + \sum_{m \neq m'}\right) \E[\ip{\mathbf{y}_\ell, \bm{\Delta}_m}^{6p}]^{\frac{1}{6p}} \E[\ip{\mathbf{y}_{\ell'},\bm{\Delta}_{m'}}^{6p}]^{\frac{1}{6p}} \E[\ip{\mathbf{y}_m,\mathbf{y}_{m'}}^{6p}]^{\frac{1}{6p}} \right)^{2p} \\
	&\leq C_p \left( N^{-2} (k k^{1/2} k^{1/2} N + k^2 k^{1/2} k^{1/2} N^{1/2}) \right)^{2p} = C_p \left(\frac{k}{\sqrt{N}}\right)^{4p}.
\end{align*}
\end{proof}

\begin{proof}[Proof of \eqref{eqn:delta3delta4-prob}]
As in the proof of \eqref{eqn:delta2delta3-prob}, we compute
\begin{align*}
	N(\sigma_{ij}^{(3,4)})^2 &= \frac{1}{N} \sum_{\ell=1}^{i-1} \sum_{\ell'=1}^{i-1} y_{\ell j} y_{\ell' j} \ip{\bm{\Delta}^{(2)}_{\ell} - \widetilde{\bm{\Delta}}_{\ell},\bm{\Delta}^{(2)}_{\ell'} - \widetilde{\bm{\Delta}}_{\ell'}}.
\end{align*}
so that, for integer $p \geq 1$, using (a weaker consequence of) Lemma \ref{lem:delta2deltatilde},
\begin{align*}
	\max_{i=1}^k \max_{j=1}^N \E[(N(\sigma_{ij}^{(3,4)})^2)^p] &\leq \left( N^{-1} \sum_{\ell,\ell'=1}^{i-1} \E[\abs{y_{\ell j}}^{4p}]^{\frac{1}{4p}} \E[\abs{y_{\ell' j}}^{4p}]^{\frac{1}{4p}} \E\left[\ip{\bm{\Delta}^{(2)}_\ell - \widetilde{\bm{\Delta}}_\ell, \bm{\Delta}^{(2)}_{\ell'} - \widetilde{\bm{\Delta}}_{\ell'}}^{2p}\right]^{\frac{1}{2p}} \right)^p \leq C_p \left( \frac{k^2}{N} \right)^p.
\end{align*}
From (the analogue of) \eqref{eqn:delta_replacement_starting_point} and Markov's inequality, we find
\begin{align*}
	\P\left( \max_{i=1}^k \max_{j=1}^N \abs{\Delta^{(3)}_{ij} - \Delta^{(4)}_{ij}} \geq \frac{1}{10N^{1/2}}\right) &\leq kN \max_{i=1}^k \max_{j=1}^N \P\left(N(\sigma_{ij}^{(3,4)})^2 Z^2 \geq \frac{1}{100} \right) \\
	&\leq kN (100)^p \E[Z^{2p}] \max_{i=1}^k \max_{j=1}^N \E[(N(\sigma_{ij}^{(3,4)})^2)^p] \leq C_p kN \left(\frac{k^2}{N}\right)^p
\end{align*}
which suffices by taking $p$ large enough.
\end{proof}

%%%%%%%%%%%%%%%%%%%%%%%%%%%%%%%%%%%%%%%%%%%%%%%%%%%%%%%%%%%%
%%%%%%%%             Subsection: Proof of (eq) via graphical expansion
%%%%%%%%%%%%%%%%%%%%%%%%%%%%%%%%%%%%%%%%%%%%%%%%%%%%%%%%%%%%

\subsection{Proof of  \eqref{eqn:delta4deltatilde-prob} via graphical expansion.}\label{sec:hard}\

The proof of \eqref{eqn:delta4deltatilde-prob} relies on the following lemma, which we prove after.
\begin{lem}
\label{lem:graphs}
There exist constants $C_p$ such that, for each integer $p \geq 1$, we have
\[
	\max_{i=1}^k \max_{j=1}^N \E[(N(\sigma_{ij}^{(4,\infty)})^2)^p] \leq C_p \left(\frac{k}{\sqrt{N}}\right)^p.
\]
\end{lem}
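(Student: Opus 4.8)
The plan is to expand $N(\sigma_{ij}^{(4,\infty)})^2$ fully into Gaussian monomials, compute its $p$-th moment by Wick's theorem, and bound the resulting diagrammatic sum. First I would record the explicit formula: since $M^{(4,i)}-\widetilde{M^{(i)}}=-\frac1N\sum_{\ell=1}^{i-1}\mathbf{y}_\ell\widetilde{\bm{\Delta}}_\ell^{\,T}$, one has $N(\sigma_{ij}^{(4,\infty)})^2=\frac1N\sum_{\ell,\ell'=1}^{i-1}y_{\ell j}y_{\ell'j}\ip{\widetilde{\bm{\Delta}}_\ell,\widetilde{\bm{\Delta}}_{\ell'}}$, and substituting $\widetilde{\bm{\Delta}}_\ell=\frac1N\sum_{m=1}^{\ell-1}\ip{\mathbf{y}_\ell,\mathbf{y}_m}\mathbf{y}_m$ and expanding every inner product into a sum over a column index yields a sum of degree-$8$ monomials in the i.i.d.\ standard Gaussians $y_{rc}$ ($r\le i-1\le k$, $c\le N$) of the form $\frac1{N^3}\,y_{\ell j}y_{\ell'j}\,y_{\ell q}y_{mq}\,y_{\ell'q'}y_{m'q'}\,y_{mt}y_{m't}$, summed over $m<\ell$, $m'<\ell'$, $\ell,\ell',m,m'\le i-1$ and over $q,q',t\le N$.

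Next I would raise this to the $p$-th power and take expectations: by Wick's theorem, $\E[(N(\sigma_{ij}^{(4,\infty)})^2)^p]$ equals $N^{-3p}$ times a sum over perfect matchings $\Pi$ of the $8p$ Gaussian factors and over index configurations (the $4p$ row indices $\ell_a,\ell'_a,m_a,m'_a\le k$ and the $3p$ column indices $q_a,q'_a,t_a\le N$, $a=1,\dots,p$), the summand being $1$ if $\Pi$ is \emph{compatible} — the two factors in each pair of $\Pi$ have equal row index and equal column index — and $0$ otherwise. Every summand is nonnegative, so no cancellation has to be exploited at this stage; reorganizing, the quantity is $N^{-3p}\sum_\Pi \#\{\text{configurations compatible with }\Pi\text{ that respect }m_a<\ell_a,\ m'_a<\ell'_a\}$, a sum over the at most $(8p-1)!!$ matchings.

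The core of the proof is to bound, for each matching $\Pi$, the number of compatible configurations. To each $\Pi$ I would attach a graph recording which of the $4p$ row-slots, and which of the $3p$ column-slots, $\Pi$ forces to coincide; if these collapse to $R=R(\Pi)$ row-classes and $C=C(\Pi)$ column-classes, none of the latter being pinned to the fixed column $j$, then the number of compatible configurations is at most $k^R N^C$, so $\Pi$'s contribution is at most $k^R N^{C-3p}$. Two structural facts control $R$ and $C$: the $2p$ factors $y_{\ell_a j},y_{\ell'_a j}$ carrying the fixed column $j$ must pair among themselves (unless some $q_a,q'_a,t_a$ is pinned to $j$, a subcase with smaller $C$), which collapses the $2p$ row-slots $\{\ell_a,\ell'_a\}_a$ into at most $p$ classes; and the constraint $m_a\neq\ell_a$ (coming from $m_a<\ell_a$), together with $m'_a\neq\ell'_a$, forbids pairing $y_{\ell_a q_a}$ with $y_{m_a q_a}$ (resp.\ $y_{\ell'_a q'_a}$ with $y_{m'_a q'_a}$) — in particular it excludes the otherwise dominant matching that simply pairs the two halves of each inner product and would contribute a term of order $k^p$. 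Exploiting these facts together with the ``doubled column'' structure (each of $q_a,q'_a,t_a$ appears in exactly two factors) one proves a combinatorial inequality relating $R$ and $C$ which, combined with $k\le N^{1/2}$, gives $k^R N^{C-3p}\le (k/\sqrt N)^p$ for every compatible $\Pi$; summing over matchings yields the claim with $C_p$ at most $(8p-1)!!$ times an absolute constant. The martingale structure of Gram--Schmidt — $\widetilde{\bm{\Delta}}_\ell$ depends only on $\mathbf{y}_1,\dots,\mathbf{y}_\ell$ and is linear in $\mathbf{y}_\ell$ — is what allows the Wick contractions to be organized in an order-respecting way, as in the proof of Lemma~\ref{lem:delta2deltatilde}, and should streamline the enumeration of admissible graphs.

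The main obstacle is precisely this last combinatorial step: turning the degree-$8p$ Wick expansion into a tractable family of diagrams and verifying the sharp degree-of-freedom inequality uniformly over all of them. The subtle point is that the hierarchical constraints $m_a<\ell_a$ must not be discarded — they are exactly what rules out the diagrams that would violate the bound — and the bookkeeping has to track the interplay between the inexpensive row-summations (range $\le k$) and the expensive column-summations (range $\le N$), with extra care devoted to the degenerate subcases in which summation indices coincide with $j$ or with one another.
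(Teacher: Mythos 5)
Your setup is correct and reproduces the paper's starting point: the identity $N(\sigma_{ij}^{(4,\infty)})^2=\frac1N\sum_{\ell,\ell'}y_{\ell j}y_{\ell' j}\ip{\widetilde{\bm{\Delta}}_\ell,\widetilde{\bm{\Delta}}_{\ell'}}$, the reduction of the $p$-th moment to a Gaussian polynomial of degree $8p$, and the two structural observations that drive the result (the factors carrying the fixed column $j$ must essentially pair among themselves, and the ordering $m_a<\ell_a$, $m'_a<\ell'_a$ forbids the pairing of the two halves of $\ip{\mathbf{y}_{\ell_a},\mathbf{y}_{m_a}}$, which is exactly the diagram of size $k^p$ that would otherwise dominate). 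Your organization differs from the paper's in one real way: you expand every inner product into a sum over a column index and apply Wick at the level of individual entries, so that the moment becomes $N^{-3p}\sum_\Pi\#\{\text{compatible configurations}\}$, a manifestly nonnegative sum over matchings; the paper instead keeps $\ip{\mathbf{y}_a,\mathbf{y}_b}$ as units, bounds them by H\"older (``straight estimates'' of size $\sqrt N$ or $N$) after a limited number of partial integrations \eqref{eqn:star_rules}, and controls the number of assignments per coincidence pattern via the unit-graph/path-graph construction.

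The gap is that the step where all the difficulty of the lemma actually sits is asserted rather than proved: the uniform per-matching bound $k^{R}N^{C-3p}\le C_p\,(k/\sqrt N)^p$. Spelled out, since the bound must hold for the whole range $1\le k\lesssim\sqrt N$, you need a combinatorial inequality of the type ``$R+2C\le 6p$ for every compatible matching'' (plus the cheaper observation $C\le 2p$, which itself requires the argument that a free column class containing a $q$-type slot cannot be a singleton, to cover the regime $R\le p$). This is precisely the analogue of the paper's Claim 1 \eqref{eqn:graphs_claim_1} and Claim 2 \eqref{eqn:graphs_claim_2} together with the path-graph argument, and it does not follow from the two structural facts alone: they give $R\le 3p$ and restrict which column classes can be singletons, but the needed statement is a joint degree-of-freedom count over rows and columns, uniform over all matchings, including those with columns pinned to $j$, cross-unit pairings, and pairings of two factors sharing a row slot (which impose no row constraint at all). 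In low order ($p=1,2$) one can check by exhaustion that the extremal matchings saturate $R+2C=6p$ and nothing exceeds it, so your route is viable and would yield the lemma with $C_p\le (8p-1)!!$ times a constant; but as written the proposal identifies the right inequality and then stops exactly where the proof has to begin, as you yourself acknowledge in the final paragraph. To complete it you would need an argument of the same nature as the paper's unit-graph/path-graph bookkeeping (or an induction on units removing one inner-product triple at a time), carried out uniformly over all matchings.
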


\begin{proof}[Proof of \eqref{eqn:delta4deltatilde-prob}]
From (the analogue of) \eqref{eqn:delta_replacement_starting_point} and Markov's inequality, we find
\begin{align*}
	\P\left( \max_{i=1}^k \max_{j=1}^N \abs{\Delta^{(4)}_{ij} - \widetilde{\Delta}_{ij}} \geq \frac{1}{10N^{1/2}}\right) &\leq kN \max_{i=1}^k \max_{j=1}^N \P\left(N(\sigma_{ij}^{(4,\infty)})^2 Z^2 \geq \frac{1}{100} \right) \\
	&\leq kN (100)^p \E[Z^{2p}] \max_{i=1}^k \max_{j=1}^N \E[(N(\sigma_{ij}^{(4,\infty)})^2)^p] \leq C_p kN \left(\frac{k}{\sqrt{N}}\right)^p,
\end{align*}
which suffices by taking $p$ large enough.
\end{proof}

\begin{proof}[Proof of Lemma \ref{lem:graphs}]
We start by computing
\begin{align*}
	N(\sigma_{ij}^{(4,\infty)})^2 &= N\sum_{k=1}^N ((M^{(4,i)}-\widetilde{M^{(i)}})_{jk})^2 = \frac{1}{N} \sum_{k=1}^N \left(\sum_{\ell=1}^{i-1} y_{\ell j} (- \widetilde{\Delta}_{\ell k}) \right)^2 \\
	&= \frac{1}{N} \sum_{\ell=1}^{i-1} \sum_{\ell'=1}^{i-1} \sum_{k=1}^N y_{\ell j} y_{\ell' j} \widetilde{\Delta}_{\ell k} \widetilde{\Delta}_{\ell' k} = \frac{1}{N} \sum_{\ell=1}^{i-1} \sum_{\ell'=1}^{i-1} y_{\ell j} y_{\ell' j} \ip{\widetilde{\bm{\Delta}}_{\ell},\widetilde{\bm{\Delta}}_{\ell'}} 
\end{align*}
and
\begin{align*}
	 \ip{\widetilde{\bm{\Delta}}_{\ell}, \widetilde{\bm{\Delta}}_{\ell'}} &= \frac{1}{N^2} \ip{ \sum_{m=1}^{\ell-1} \ip{\mathbf{y}_\ell,\mathbf{y}_m} \mathbf{y}_m, \sum_{m'=1}^{\ell'-1} \ip{\mathbf{y}_{\ell'}, \mathbf{y}_{m'}} \mathbf{y}_{m'}} = \frac{1}{N^2} \sum_{m=1}^{\ell-1} \sum_{m'=1}^{\ell'-1} \ip{\mathbf{y}_\ell,\mathbf{y}_m} \ip{\mathbf{y}_{\ell'}, \mathbf{y}_{m'}} \ip{\mathbf{y}_m, \mathbf{y}_{m'}},
\end{align*}
so that
\begin{equation}
\label{eqn:sigma4infty}
\begin{split}
	&\E[(N(\sigma_{ij}^{(4,\infty)})^2)^p] \\
	&= N^{-3p} \sum_{\ell_1, \ell'_1, \ldots, \ell_p, \ell'_p=1}^{i-1} \sum_{m_1=1}^{\ell_1-1} \sum_{m'_1 = 1}^{\ell'_1-1} \cdots \sum_{m_p=1}^{\ell_p-1} \sum_{m'_p=1}^{\ell'_p-1} \E\left[ \prod_{a=1}^p y_{\ell_a j} y_{\ell'_a j} \ip{\mathbf{y}_{\ell_a}, \mathbf{y}_{m_a}} \ip{\mathbf{y}_{\ell'_a}, \mathbf{y}_{m'_a}} \ip{\mathbf{y}_{m_a}, \mathbf{y}_{m'_a}} \right].
\end{split}
\end{equation}
Notice that everything is now written in terms of the $\mathbf{y}$ variables, which are independent Gaussians, so in computing this expectation via the Wick theorem, the only question is which indices coincide. Write $A$ for an \emph{assignment} of values to these indices, formally a function from $\{1, \ldots, 4p\}$ to $\{1, \ldots, i-1\}$, interpreted such that, for $k = 1, \ldots, p$,  $A(2k-1)$ is the value of $\ell_k$, $A(2k)$ is the value of $\ell'_k$, $A(2p+2k-1)$ is the value of $m_k$, and $A(2p+2k)$ is the value of $m'_k$. Not all functions from $\{1, \ldots, 4p\}$ to $\{1, \ldots, i-1\}$ are valid assignments; a \emph{legal} assignment is one in which $m_k < \ell_k$ and $m'_k < \ell'_k$ for each $k$, and legal assignments are the only kind that appear in \eqref{eqn:sigma4infty}. Write $\ms{A}$ for the set of all legal assignments. Then define
\[
	\mathbf{y}_A \defeq \prod_{a=1}^p y_{\ell_a j} y_{\ell'_a j} \ip{\mathbf{y}_{\ell_a}, \mathbf{y}_{m_a}} \ip{\mathbf{y}_{\ell'_a}, \mathbf{y}_{m'_a}} \ip{\mathbf{y}_{m_a}, \mathbf{y}_{m'_a}},
\]
where the indices have the values assigned to them by $A$, so that
\[
	\E[(N(\sigma_{ij}^{(4,\infty)})^2)^p] = N^{-3p} \sum_{A \in \ms{A}} \E\left[ \mathbf{y}_A \right].
\]
Say that a \emph{pattern} $P = P(A)$ is the knowledge of which $\ell$ and $m$ variables coincide in $A$ (formally, for each assignment $A$, create a graph $G_A$ whose vertices carry the variable names ``$\ell_1$,'' ``$\ell'_1$,'' $\ldots$ ``$m_p$,'' ``$m'_p$,'' and such that two vertices are connected in $G_A$ if $A$ assigns them the same value; then a pattern $P$ is an equivalence class of assignments $A$, where the equivalence relation is $A \cong A'$ if $G_A = G_{A'}$). Each assignment $A$ belongs to exactly one pattern, and in fact $\E[\mathbf{y}_A]$ depends only on the pattern $P$ to which $A$ belongs; thus we abuse notation by writing $\E[\mathbf{y}_P]$ instead of $\E[\mathbf{y}_A]$. If $\ms{P}$ denotes the set of all legal patterns, and 
\[
	a(P) \defeq \#\{A \in \ms{A} : P(A) = P\},
\]
then we can write equivalently
\begin{equation}
\label{eqn:pattern_sum}
	\E[(N(\sigma_{ij}^{(4,\infty)})^2)^p] = N^{-3p} \sum_{P \in \ms{P}} a(P) \E\left[ \mathbf{y}_P \right].
\end{equation}
Now our goal is to estimate $a(P)$ and $\abs{\E[\mathbf{y}_P]}$. Notice that $\mathbf{y}_P$
 (technically $\mathbf{y}_A$, where $A$ is a member of the equivalence class represented by $P$) is a product of some terms of the form $y_{aj}$ and some terms of the form $\ip{\mathbf{y}_a,\mathbf{y}_b}$. In principle its mean could be exactly computed for each $P$, 
 by doing all of the integrations, but this is more work than necessary; on the other end of the spectrum, one could do none of the integrations, which roughly means estimating each $\ip{\mathbf{y}_a,\mathbf{y}_b}$ by $N^{(1+\delta_{ab})/2}$ and each $y_{aj}$ by one, but this is not good enough. We need to do some of the integrations, and it turns out that it is sufficient to only do integrations of the following simple type: Whenever $a \neq b \neq c$ and $X$ is independent of $\mathbf{y}_b$, we have 
\begin{equation}
\label{eqn:star_rules}
\begin{split}
	\E_{\mathbf{y}_b}[\ip{\mathbf{y}_a,\mathbf{y}_b} \ip{\mathbf{y}_b,\mathbf{y}_c} X] &= \ip{\mathbf{y}_a,\mathbf{y}_c} X, \\
	\E_{\mathbf{y}_b}[y_{bj} \ip{\mathbf{y}_a,\mathbf{y}_b} X] &= y_{aj} X.
\end{split}
\end{equation}
That is, in the estimation, we pay careful attention \emph{only} to indices which appear exactly twice.

At the end, we will carry out a procedure we call a \emph{straight estimate}, which we now explain. Consider expectations of the form
\begin{equation}
\label{eqn:v_form}
	\E\left[\left( \prod_{a=1}^\alpha v_a\right) \left( \prod_{b=1}^\beta \mathbf{v}_b \right) \left( \prod_{c=1}^\gamma \mathbf{V}_c \right) \right]
\end{equation}
with each $v_a$ of the form $y_{r_a j}$ for some $r_a$, each $\mathbf{v}_b$ of the form $\ip{\mathbf{y}_{s_b},\mathbf{y}_{t_b}}$ for some $s_b \neq t_b$, and each $\mathbf{V}_c$ of the form $\|\mathbf{y}_{u_c}\|^2$ for some $u_c$, and with $\alpha, \beta, \gamma \geq 0$, including allowing different indices to coincide. A \emph{straight estimate}, which amounts to just a power counting, is the following:
\begin{align*}
	&\abs{\E\left[\left( \prod_{a=1}^\alpha v_a\right) \left( \prod_{b=1}^\beta \mathbf{v}_b \right) \left( \prod_{c=1}^\gamma \mathbf{V}_c \right) \right]} \\
	&\leq \left( \prod_{a=1}^\alpha \E[\abs{v_a}^{\alpha+\beta+\gamma}]^{\frac{1}{\alpha+\beta+\gamma}} \right) \left( \prod_{b=1}^\beta \E[\abs{\mathbf{v}_b}^{\alpha+\beta+\gamma}]^{\frac{1}{\alpha+\beta+\gamma}} \right) \left( \prod_{c=1}^\gamma \E[\abs{\mathbf{V}_c}^{\alpha+\beta+\gamma}]^{\frac{1}{\alpha+\beta+\gamma}} \right) \\
	&\leq C_{\alpha,\beta,\gamma} (N^{1/2})^{\beta} N^{\gamma}.
\end{align*}
In our case, $\alpha$, $\beta$, and $\gamma$ will take values in some finite set depending on $p$, so we can write $\max_{\alpha,\beta,\gamma} C_{\alpha,\beta,\gamma}$ as some $C_p$. This means it suffices to just count how many norms and how many inner products of distinct $\mathbf{y}$'s we have. 

So our procedure is to use \eqref{eqn:star_rules} repeatedly in 
a certain precise way we describe below, then apply a straight estimate for the remaining factors.
Roughly speaking, we will count  how many times we can apply \eqref{eqn:star_rules}. Note that
 each application gains a factor $N^{-1/2}$
over the straight estimate.  A final power counting shows that we can apply \eqref{eqn:star_rules}  sufficiently many
times to obtain our target bound.
 
 We track the integrations using a graphical notation that we now introduce. The basic idea is that we are going to define one fixed graph, which at the beginning has no colors. For each assignment $A$, we will obtain a graph $\mf{G}_A$ by coloring the vertices of this graph in up to $k$ distinct colors; multiple vertices with the same color correspond to multiple appearances in $A$ of the same index. Then applying the rules \eqref{eqn:star_rules} corresponds to ``contracting'' an edge, in the sense of erasing an edge from some $v$ to some $v'$ and also erasing an edge from $v'$ to some $v''$, and drawing a new edge from $v$ to $v''$. 

The basic graph has $4p+1$ vertices and $5p$ edges. We name the vertices by index names in the assignment, with a slight abuse of notation: We have vertices ``$\ell_1$,'' ``$\ell'_1$,'' $\ldots$, ``$\ell_p$,'' ``$\ell'_p$,'' ``$m_1$,'' ``$m'_1$,'' $\ldots$ ``$m_p$,'' ``$m'_p$,'' plus one special vertex called ``$\mathbf{e}_j$.'' Later we will assign values to these variables, but these will appear in the graph as colors. If $\ell_1 = 4$, then we have a vertex called $\ell_1$ with the ``color'' $4$, not a vertex called $4$. The edge set consists of the cycles $\mathbf{e}_j - \ell_a - m_a - m'_a - \ell'_a - \mathbf{e}_j$ for each $a \in \llbracket 1, p \rrbracket$, and we will use the term \emph{unit} to refer to a vertex subset of the form $\{\ell_a, m_a, m'_a, \ell'_a\}$ for some $a \in \llbracket 1, p \rrbracket$. Occasionally we will use the term abusively to denote these four vertices plus the three edges between them; or to denote these four vertices, the three edges between them, plus the two edges between them and $\mathbf{e}_j$. The $p=3$ graph is shown in Figure~\ref{fig:basic} (although the basic graph ``has no colors,'' we draw the $\mathbf{e}_j$ vertex in white to remember that it is special).

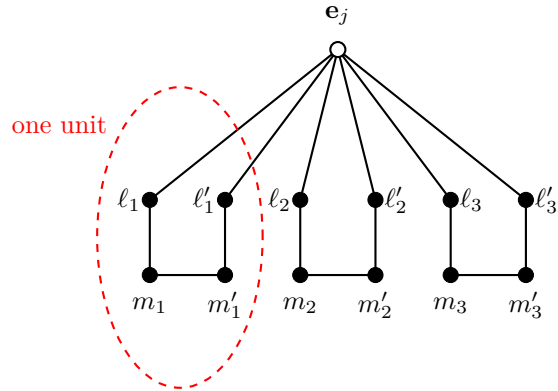
\begin{figure}[h!]
\begin{center}
\begin{tikzpicture}
	\draw [black, fill=black] (-2.5,1) circle [radius=0.1];
	\draw [black, fill=black] (-2.5,0) circle [radius=0.1];
	\draw [black, fill=black] (-1.5,1) circle [radius=0.1];
	\draw [black, fill=black] (-1.5,0) circle [radius=0.1];
	\draw [thick](0,3) -- (-2.5,1) -- (-2.5,0) -- (-1.5,0) -- (-1.5,1) -- (0,3);
	\draw [black, fill=black] (0.5,1) circle [radius=0.1];
	\draw [black, fill=black] (0.5,0) circle [radius=0.1];
	\draw [black, fill=black] (-0.5,1) circle [radius=0.1];
	\draw [black, fill=black] (-0.5,0) circle [radius=0.1];
	\draw [thick](0,3) -- (0.5,1) -- (0.5,0) -- (-0.5,0) -- (-0.5,1) -- (0,3);
	\draw [black, fill=black] (1.5,1) circle [radius=0.1];
	\draw [black, fill=black] (1.5,0) circle [radius=0.1];
	\draw [black, fill=black] (2.5,1) circle [radius=0.1];
	\draw [black, fill=black] (2.5,0) circle [radius=0.1];
	\draw [thick](0,3) -- (1.5,1) -- (1.5,0) -- (2.5,0) -- (2.5,1) -- (0,3);
	\draw [black, thick, fill=white] (0,3) circle [radius=0.1];
	\node [above] at (0,3.2) {$\mathbf{e}_j$};
	\node [left] at (-2.5,1) {$\ell_1$};
	\node [left] at (-1.5,1) {$\ell'_1$};
	\node [left] at (-0.5,1) {$\ell_2$};
	\node [right] at (0.5,1) {$\ell'_2$};
	\node [right] at (1.5,1) {$\ell_3$};
	\node [right] at (2.5,1) {$\ell'_3$};
	\node at (-2.5,-0.4) {$m_1$};
	\node at (-1.5,-0.4) {$m'_1$};
	\node at (-0.5,-0.4) {$m_2$};
	\node at (0.5,-0.4) {$m'_2$};
	\node at (1.5,-0.4) {$m_3$};
	\node at (2.5,-0.4) {$m'_3$};
	\draw [dashed, thick, red] (-2.1,0.5) ellipse (1.1 and 2);
	\node [red] at (-3.7,2) {one unit};
\end{tikzpicture}
\end{center}
\caption{The basic graph when $p = 3$, before coloring, with one of the three ``units'' labelled.}
\label{fig:basic}
\end{figure}

The point of this graph is that edges correspond to inner products that are present in $\mathbf{y}_A$; for example, $\mathbf{y}_A$ contains $\ip{\mathbf{y}_{\ell_1},\mathbf{y}_{m_1}}$ and $\ip{\mathbf{y}_{m_1},\mathbf{y}_{m'_1}}$ but not $\ip{\mathbf{y}_{\ell_1},\mathbf{y}_{\ell'_1}}$. Scalars like $y_{\ell_1j}$ are thought of as $\ip{\mathbf{y}_{\ell_1},\mathbf{e}_j}$ where $\mathbf{e}_j$ the $j$th vector in the standard basis, and this is the point of the vertex $\mathbf{e}_j$; it is special because the naive size of $\ip{\mathbf{y}_a,\mathbf{y}_b}$ is order $N^{(1+\delta_{ab})/2}$ but the naive size of $\ip{\mathbf{y}_a,\mathbf{e}_j}$ is order one. 

To encode assignments $A$ in the graph, we color the vertices other than $\mathbf{e}_j$, recording different values by different colors. Fix once and for all $k$ colors, which we will sometimes number. For each assignment $A$, we start with the basic graph whose $p=3$ case is shown in Figure~\ref{fig:basic}, then obtain a graph we call $\mf{G}_A$ by coloring the vertices according to the assignment. For example, if $A$ assigns $\ell_1$ to $7$, $\ell'_1$ to $8$, $m_1$ to $9$, and $m'_1$ to $9$, then the vertex $\ell_1$ gets the ``color'' $7$, the vertex $\ell'_1$ gets the ``color'' $8$, and so on. For example, when $p = 3$, the assignment $A = \{\ell_1 = 1, \ell'_1 = 1, \ell_2 = 3, \ell'_2 = 5, \ell_3 = 8, \ell'_3 = 9, m_1 = 2, m'_1 = 3, m_2 = 4, m'_2=6, m_3 = 7, m'_3 = 7\}$ is drawn in Figure \ref{fig:assignment} (choosing, for the sake of the picture, actual colors).

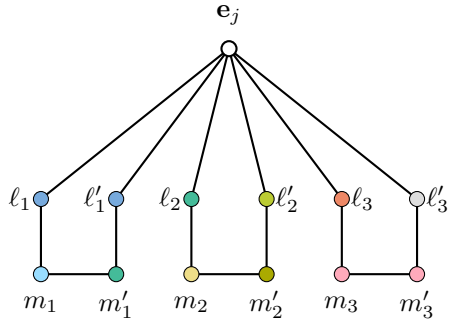
\begin{figure}[h!]
\begin{center}
\begin{tikzpicture}
	\draw [thick](0,3) -- (-2.5,1) -- (-2.5,0) -- (-1.5,0) -- (-1.5,1) -- (0,3);
	\draw [black, fill=color1] (-2.5,1) circle [radius=0.1];
	\draw [black, fill=color2] (-2.5,0) circle [radius=0.1];
	\draw [black, fill=color1] (-1.5,1) circle [radius=0.1];
	\draw [black, fill=color3] (-1.5,0) circle [radius=0.1];
	
	\draw [thick](0,3) -- (0.5,1) -- (0.5,0) -- (-0.5,0) -- (-0.5,1) -- (0,3);
	\draw [black, fill=color4] (0.5,1) circle [radius=0.1];
	\draw [black, fill=color5] (0.5,0) circle [radius=0.1];
	\draw [black, fill=color3] (-0.5,1) circle [radius=0.1];
	\draw [black, fill=color6] (-0.5,0) circle [radius=0.1];
	
	\draw [thick](0,3) -- (1.5,1) -- (1.5,0) -- (2.5,0) -- (2.5,1) -- (0,3);
	\draw [black, fill=color7] (1.5,1) circle [radius=0.1];
	\draw [black, fill=color8] (1.5,0) circle [radius=0.1];
	\draw [black, fill=color9] (2.5,1) circle [radius=0.1];
	\draw [black, fill=color8] (2.5,0) circle [radius=0.1];

	\draw [black, thick, fill=white] (0,3) circle [radius=0.1];
	\node [above] at (0,3.2) {$\mathbf{e}_j$};
	\node [left] at (-2.5,1) {$\ell_1$};
	\node [left] at (-1.5,1) {$\ell'_1$};
	\node [left] at (-0.5,1) {$\ell_2$};
	\node [right] at (0.5,1) {$\ell'_2$};
	\node [right] at (1.5,1) {$\ell_3$};
	\node [right] at (2.5,1) {$\ell'_3$};
	\node at (-2.5,-0.4) {$m_1$};
	\node at (-1.5,-0.4) {$m'_1$};
	\node at (-0.5,-0.4) {$m_2$};
	\node at (0.5,-0.4) {$m'_2$};
	\node at (1.5,-0.4) {$m_3$};
	\node at (2.5,-0.4) {$m'_3$};
\end{tikzpicture}
\end{center}
\caption{A sample graph $\mf{G}_A$, for the case $p = 3$ and the assignment $A = \{\ell_1 = 1, \ell'_1 = 1, \ell_2 = 3, \ell'_2 = 5, \ell_3 = 8, \ell'_3 = 9, m_1 = 2, m'_1 = 3, m_2 = 4, m'_2=6, m_3 = 7, m'_3 = 7\}$. For example, the vertices $\ell_1$ and $\ell'_1$ are both assigned the ``color'' numbered $1$, which is drawn in light blue for the figure, and the vertex $m_1$ is assigned the ``color'' $2$, which is drawn in light cyan for the figure. This graph has exactly one \emph{monochromatic edge}, namely the edge between $m_3$ and $m'_3$, which are both colored pink. (This name is slightly abusive, since we do not color the edges, unlike in the unit graphs we introduce later; ``monochromatic'' refers just to the colors on the vertices which the edge connects.)}
\label{fig:assignment}
\end{figure}

This corresponds to
\begin{equation}
\label{eqn:example_yA}
	\mathbf{y}_A = y_{1j}^2 y_{3j}y_{5j}y_{7j}y_{9j} \ip{\mathbf{y}_1,\mathbf{y}_2} \ip{\mathbf{y}_1,\mathbf{y}_3} \ip{\mathbf{y}_2,\mathbf{y}_3} \ip{\mathbf{y}_3,\mathbf{y}_4} \ip{\mathbf{y}_5,\mathbf{y}_6} \ip{\mathbf{y}_4,\mathbf{y}_6} \ip{\mathbf{y}_7,\mathbf{y}_8} \ip{\mathbf{y}_9,\mathbf{y}_8} \|\mathbf{y}_8\|^2.
\end{equation}

(If $A'$ also has pattern $P$, then $\mf{G}_{A'}$ will be related to $\mf{G}_A$ just by replacing the colors -- for example, the pink vertices in Figure~\ref{fig:assignment} could instead be colored brown.)

One of the main enemies in our counting is the presence of \emph{monochromatic edges}, meaning edges connecting vertices of the same color, like the edge between $m_3$ and $m'_3$ in Figure~\ref{fig:assignment}, since they correspond to norms like $\|\mathbf{y}_7\|^2$ which are large, of order $N$. 
 It will be very important for us that only $p$ of the edges can be monochromatic, namely those between $m_a$ and $m'_a$ for $a = 1, \ldots, p$. The edges between $\ell_a$ and $m_a$, or between $\ell'_a$ and $m'_a$, can \emph{never} be monochromatic, since in \eqref{eqn:sigma4infty}, we always have $m_a < \ell_a$ and $m'_a < \ell'_a$. 

It will be useful to keep track of which colors are shared across units, which we will do using a second auxiliary 
 graph, that we will call the \emph{unit graph} and denote by $\mf{U}_A$. This graph has just $p$ vertices, each vertex representing a unit.
  The vertices have no color, but if two units each contain a vertex of a certain color, then we place an edge \emph{of that color} between them. Two vertices can be connected by multiple edges (if each has a red vertex and a blue vertex, say), but only of distinct colors (if two units each have two red vertices, we put one red edge between them, not two). For example, Figure~\ref{fig:unit} shows one example graph $\mf{G}_A$ and the corresponding unit graph $\mf{U}_A$ when $p = 4$.

\begin{figure}[h!]
\begin{center}
\begin{tikzpicture}
	\draw [thick](1,3) -- (-2.5,1) -- (-2.5,0) -- (-1.5,0) -- (-1.5,1) -- (1,3);
	\draw [black, fill=color7] (-2.5,1) circle [radius=0.1];
	\draw [black, fill=color4] (-2.5,0) circle [radius=0.1];
	\draw [black, fill=color7] (-1.5,1) circle [radius=0.1];
	\draw [black, fill=color1] (-1.5,0) circle [radius=0.1];
	
	\draw [thick](1,3) -- (0.5,1) -- (0.5,0) -- (-0.5,0) -- (-0.5,1) -- (1,3);
	\draw [black, fill=color4] (0.5,1) circle [radius=0.1];
	\draw [black, fill=color2] (0.5,0) circle [radius=0.1];
	\draw [black, fill=color1] (-0.5,1) circle [radius=0.1];
	\draw [black, fill=color3] (-0.5,0) circle [radius=0.1];
	
	\draw [thick](1,3) -- (1.5,1) -- (1.5,0) -- (2.5,0) -- (2.5,1) -- (1,3);
	\draw [black, fill=color7] (1.5,1) circle [radius=0.1];
	\draw [black, fill=color6] (1.5,0) circle [radius=0.1];
	\draw [black, fill=color7] (2.5,1) circle [radius=0.1];
	\draw [black, fill=color6] (2.5,0) circle [radius=0.1];
	
	\draw [thick](1,3) -- (3.5,1) -- (3.5,0) -- (4.5,0) -- (4.5,1) -- (1,3);
	\draw [black, fill=color5] (3.5,1) circle [radius=0.1];
	\draw [black, fill=color8] (3.5,0) circle [radius=0.1];
	\draw [black, fill=color9] (4.5,1) circle [radius=0.1];
	\draw [black, fill=color10] (4.5,0) circle [radius=0.1];

	\draw [black, thick, fill=white] (1,3) circle [radius=0.1];
	\node [above] at (1,3.2) {$\mathbf{e}_j$};
	\node [left] at (-2.5,1) {$\ell_1$};
	\node [left] at (-1.5,1) {$\ell'_1$};
	\node [left] at (-0.5,1) {$\ell_2$};
	\node [left] at (0.5,1) {$\ell'_2$};
	\node [right] at (1.5,1) {$\ell_3$};
	\node [right] at (2.5,1) {$\ell'_3$};
	\node [right] at (3.5,1) {$\ell_4$};
	\node [right] at (4.5,1) {$\ell'_4$};
	\node at (-2.5,-0.4) {$m_1$};
	\node at (-1.5,-0.4) {$m'_1$};
	\node at (-0.5,-0.4) {$m_2$};
	\node at (0.5,-0.4) {$m'_2$};
	\node at (1.5,-0.4) {$m_3$};
	\node at (2.5,-0.4) {$m'_3$};
	\node at (3.5,-0.4) {$m_4$};
	\node at (4.5,-0.4) {$m'_4$};
	
	\node[left] at (-4,1) {$\mf{G}_A$};
	
	\draw (-6,-2) -- (8,-2);
	
	\draw [ultra thick, color1] (0,-3) arc (45:135:1.41);
	\draw [ultra thick, color4] (0,-3) arc (-45:-135:1.41);
	\draw [ultra thick, color7] (2,-3) arc (-45:-135:2.87);
	\draw [black, fill=black] (-2,-3) circle [radius=0.1];
	\draw [black, fill=black] (0,-3) circle [radius=0.1];
	\draw [black, fill=black] (2,-3) circle [radius=0.1];
	\draw [black, fill=black] (4,-3) circle [radius=0.1];
	
	\node[left] at (-4,-3) {$\mf{U}_A$};
\end{tikzpicture}
\end{center}
\caption{A $p=4$ sample graph $\mf{G}_A$ and the corresponding unit graph $\mf{U}_A$. Notice that $\mf{G}_A$ has colored vertices and colorless edges, while $\mf{U}_A$ has colorless vertices and colored edges. The unit graph has a light blue edge between the first and second vertices because $m'_1$ and $\ell_2$ are both light blue, and so on. Notice also that, although the first and third units each have two orange vertices, we only put one orange edge, not two, between the first and third vertices in the unit graph.}
\label{fig:unit}
\end{figure}

The unit graph will eventually tell us how many integrations we will do, i.e., in which units we will apply \eqref{eqn:star_rules}, in the following way: Say that a vertex $v$ in the unit graph is \emph{monochromatic} if all edges touching it have the same color, otherwise \emph{polychromatic}. By convention, we take isolated vertices to be monochromatic. In the unit graph in Figure~\ref{fig:unit}, the third and fourth vertices are monochromatic.

Suppose the number of polychromatic vertices in $\mf{U}_A$ is $q \in \{0, 1, \ldots, p\}$. For $j = 2, 3, 4$, let $n^{\textup{pc}}_j$ (respectively, $n^{\textup{mc}}_j$) be the number of polychromatic (respectively, monochromatic) vertices in $\mf{U}_A$ whose corresponding units in $\mf{G}_A$ have exactly $j$ distinct colors. For example, in Figure~\ref{fig:unit}, we have $q = 2$, $n^{\textup{pc}}_2 = 0$, $n^{\textup{pc}}_3 = 1$, $n^{\textup{pc}}_4 = 1$, $n^{\textup{mc}}_2 = 1$, $n^{\textup{mc}}_3 = 0$, $n^{\textup{mc}}_4 = 1$. Notice that $q$, the $n^{\textup{pc}}_j$'s, and the $n^{\textup{mc}}_j$'s depend only on the pattern, not on the assignment. Notice also that 
\[
	n^{\textup{pc}}_2 + n^{\textup{pc}}_3 + n^{\textup{pc}}_4 = q, \qquad n^{\textup{mc}}_2 + n^{\textup{mc}}_3 + n^{\textup{mc}}_4 = p-q.
\]
We then make the following claims:
\begin{itemize}
\item \textbf{Claim 1:} For each $p$, there exists $C_p$ with
\begin{equation}
\label{eqn:graphs_claim_1}
	\abs{\E[\mathbf{y}_P]} \leq C_p (N^2)^{n^{\textup{pc}}_2 + n^{\textup{pc}}_3} (N^{3/2})^{n^{\textup{pc}}_4} (N^2)^{n^{\textup{mc}}_2} (N^{3/2})^{n^{\textup{mc}}_3} N^{n^{\textup{mc}}_4}.
\end{equation}
\item \textbf{Claim 2:}
\begin{equation}
\label{eqn:graphs_claim_2}
	a(P) \leq k^{2n^{\textup{pc}}_2 + 3n^{\textup{pc}}_3 + 4n^{\textup{pc}}_4 - q + 2n^{\textup{mc}}_2 + 3n^{\textup{mc}}_3 + 4n^{\textup{mc}}_4}.
\end{equation}
\end{itemize}
Assume these claims for a moment; then since
\[
	2n^{\textup{pc}}_2 + 3n^{\textup{pc}}_3 + 4n^{\textup{pc}}_4 - q + 2n^{\textup{mc}}_2 + 3n^{\textup{mc}}_3 + 4n^{\textup{mc}}_4 \geq 2(n^{\textup{pc}}_2 + n^{\textup{pc}}_3 + n^{\textup{pc}}_4) - q + 2(n^{\textup{mc}}_2 + n^{\textup{mc}}_3 + n^{\textup{mc}}_4) = q+2(p-q) \geq p
\]
and
\begin{align*}
	&\frac{2n^{\textup{pc}}_2 + 3n^{\textup{pc}}_3 + 4n^{\textup{pc}}_4 - q + 2n^{\textup{mc}}_2 + 3n^{\textup{mc}}_3 + 4n^{\textup{mc}}_4}{2} + 2(n^{\textup{pc}}_2 + n^{\textup{pc}}_3 + n^{\textup{mc}}_2) + \frac{3}{2}(n^{\textup{pc}}_4 + n^{\textup{mc}}_3) + n^{\textup{mc}}_4 \\
	&= 3n^{\textup{pc}}_2 + \frac{7}{2}n^{\textup{pc}}_3 + \frac{7}{2}n^{\textup{pc}}_4 - \frac{q}{2} + 3(n^{\textup{mc}}_2+n^{\textup{mc}}_3 + n^{\textup{mc}}_4) \leq 3q + 3(p-q) = 3p,
\end{align*}
we have
\[
	\sup_{P \in \ms{P}} N^{-3p} a(P)\abs{\E[\mathbf{y}_P]} \leq C_p \left(\frac{k}{\sqrt{N}}\right)^p.
\]
We plug this into \eqref{eqn:pattern_sum}; since the number of legal patterns (i.e., the cardinality of the set $\ms{P}$) depends on $p$ but not on $N$, this completes the proof of Lemma \ref{lem:graphs}, modulo the claims above.

We start with Claim 1. Notice that $\E[\mathbf{y}_P]$ is of the form \eqref{eqn:v_form}, and that applying the integration rules \eqref{eqn:star_rules} to an equation of the form \eqref{eqn:v_form} produces another equation of the form \eqref{eqn:v_form}. In fact, we will start with $\E[\mathbf{y}_P]$, apply \eqref{eqn:star_rules} some number of times, and eventually take the straight estimate on what remains.

Since the straight estimate is just power counting, we can consider it one unit at a time. The straight estimate for any unit is at most $C_p N^2$, since there is at most one monochromatic edge (namely, the one between the $m$'s), which has size $N$, and two bichromatic edges, which each have size $\sqrt{N}$. The straight estimate for a unit with four distinct colors is $N^{3/2}$, since there cannot be any monochromatic edge. From \eqref{eqn:graphs_claim_1}, we see that we are taking the straight estimate for all units corresponding to polychromatic vertices in $\mf{U}_A$, and for all units with two distinct colors corresponding to a monochromatic vertex in $\mf{U}_A$. We treat the other units as follows:
\begin{itemize}
\item If the unit has exactly three distinct colors and corresponds to a monochromatic vertex in $\mf{U}_A$, then there are two distinct colors that appear in the unit exactly one time (and another that appears twice); since at most one of these also appears in other units, at least one of these only appears in this unit. Suppose it is the color $b$; then $\E[\mathbf{y}_P]$ either has the form $\E[\ip{\mathbf{y}_a,\mathbf{y}_b}\ip{\mathbf{y}_b,\mathbf{y}_c}X]$ where $a$, $b$, and $c$ are distinct and $X$ is independent of $\mathbf{y}_b$, or has the form $\E[y_{bj} \ip{\mathbf{y}_a,\mathbf{y}_b} X]$ where $a$ and $b$ are distinct and $X$ is independent of $\mathbf{y}_b$. Either way, we apply \eqref{eqn:star_rules} to reduce by one the count of inner products of distinct $y$'s, while leaving the counts of norm-squares unchanged. Then the straight estimate on the result will be at most $N^{3/2}$, which is exactly what we want for \eqref{eqn:graphs_claim_1}.
\item If the unit has exactly four distinct colors and corresponds to a monochromatic vertex in $\mf{U}_A$, then there are two distinct colors that appear in the unit exactly one time, and appear in no other unit. We integrate each of these as in the case of three distinct colors; each gains a factor of $\sqrt{N}$, so the straight estimate on the result is $N$, which is exactly what we want for \eqref{eqn:graphs_claim_1}.
\end{itemize}
This finishes the proof of Claim 1 \eqref{eqn:graphs_claim_1}.

Now we prove Claim 2. Without the factor of $-q$, \eqref{eqn:graphs_claim_2} would be trivial, and in fact usually an overcount: We just choose two colors ($k^2$) for the units with two colors, three colors ($k^3$) for the units with three colors, and four colors ($k^4$) for the units with four colors, to obtain
\begin{equation}
\label{eqn:colors_reference}
	a(P) \leq k^{2n^{\textup{pc}}_2 + 3n^{\textup{pc}}_3 + 4n^{\textup{pc}}_4 + 2n^{\textup{mc}}_2 + 3n^{\textup{mc}}_3 + 4n^{\textup{mc}}_4},
\end{equation}
which overcounts because it does not impose the requirement in some patterns that colors overlap between units. To account for this, we introduce 
a new auxiliary graph, called the \emph{path graph} $\mf{P}_A$, which is obtained by removing some edges from the unit graph by the following procedure:

Fix a color appearing in $\mf{U}_A$, say blue, and notice that the ``blue subgraph'' of $\mf{U}_A$ consisting of all blue edges and incident vertices is the complete graph on its vertex set $V_b$, by definition. Remove edges as necessary until the blue subgraph is just a single path spanning $V_b$. Repeat for every color that appears in $\mf{U}_A$.

Notice that the path graph can be the same as the unit graph, if no edges needed removing. One example is in Figure~\ref{fig:path}.

\begin{figure}[h!]
\begin{center}
\begin{tikzpicture}
	\draw [ultra thick, color1] (0,1) -- (1,0) -- (-1,0) -- (0,1);
	\draw [ultra thick, color7] (1,0) -- (0,-1) -- (-1,0);
	\draw [ultra thick, color7] (0,1) arc (90:180:1);
	\draw [ultra thick, color7] (1,0) arc (0:90:1);
	\draw [ultra thick, color7] (-1,0) arc [start angle = 150, end angle=390, x radius = 1.15, y radius = 1];
	\draw [ultra thick, color7] (0,-1) -- (0,1);
	\draw [ultra thick, color4] (1,2) -- (0,1);
	\draw [ultra thick, color10] (-1,2) -- (0,2);
	\draw [black, fill=black] (1,0) circle [radius=0.1];
	\draw [black, fill=black] (-1,0) circle [radius=0.1];
	\draw [black, fill=black] (0,1) circle [radius=0.1];
	\draw [black, fill=black] (-1,2) circle [radius=0.1];
	\draw [black, fill=black] (1,2) circle [radius=0.1];
	\draw [black, fill=black] (0,2) circle [radius=0.1];
	\draw [black, fill=black] (0,-1) circle [radius=0.1];
	
	\node at (0,3) {This unit graph ...};
	
	\draw [ultra thick] (4,-2) -- (4,4);
	
	\draw [ultra thick, color1] (7,1) -- (8,0) -- (6,0);
	\draw [ultra thick, color7] (7,1) arc (90:180:1);
	\draw [ultra thick, color7] (7,1) -- (7,-1) -- (8,0);
	\draw [ultra thick, color4] (8,2) -- (7,1);
	\draw [ultra thick, color10] (6,2) -- (7,2);
	\draw [black, fill=black] (8,0) circle [radius=0.1];
	\draw [black, fill=black] (6,0) circle [radius=0.1];
	\draw [black, fill=black] (7,1) circle [radius=0.1];
	\draw [black, fill=black] (6,2) circle [radius=0.1];
	\draw [black, fill=black] (8,2) circle [radius=0.1];
	\draw [black, fill=black] (7,2) circle [radius=0.1];
	\draw [black, fill=black] (7,-1) circle [radius=0.1];
	
	\node at (7,3) {... could lead to this path graph ...};
	
	\draw [ultra thick] (10,-2) -- (10,4);
	
	\draw [ultra thick, color1] (12,0) -- (13,1) -- (14,0);
	\draw [ultra thick, color7] (14,0) arc (0:90:1);
	\draw [ultra thick, color7] (12,0) -- (13,-1) -- (14,0);
	\draw [ultra thick, color4] (14,2) -- (13,1);
	\draw [ultra thick, color10] (12,2) -- (13,2);
	\draw [black, fill=black] (14,0) circle [radius=0.1];
	\draw [black, fill=black] (12,0) circle [radius=0.1];
	\draw [black, fill=black] (13,1) circle [radius=0.1];
	\draw [black, fill=black] (12,2) circle [radius=0.1];
	\draw [black, fill=black] (14,2) circle [radius=0.1];
	\draw [black, fill=black] (13,2) circle [radius=0.1];
	\draw [black, fill=black] (13,-1) circle [radius=0.1];
	
	\node at (13,3) {... or to this path graph.};
\end{tikzpicture}
\end{center}
\caption{An example of the multiple path graphs that can be obtained from a given unit graph. The middle and the right graphs have no extraneous edges, so if they were themselves unit graphs, they would be the same as their corresponding path graphs.}
\label{fig:path}
\end{figure}
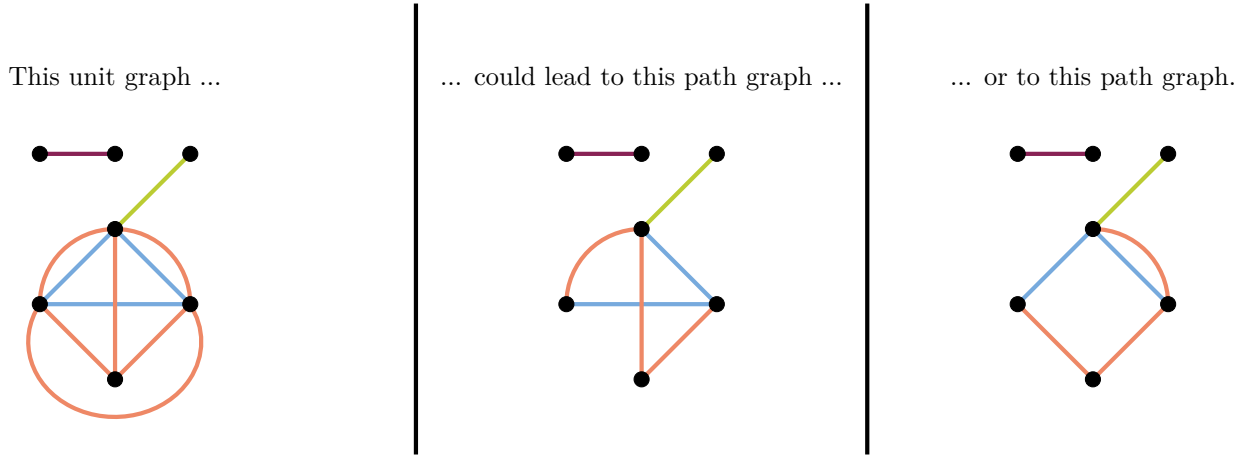

It is certainly not unique (see Figure~\ref{fig:path}), but it has the good property, by construction, that its edge set can be partitioned into paths of different colors: exactly one blue path, exactly one red path, and so on. It is easy to see that each edge of the path graph saves one color with respect to the count that ignores inter-unit connections, meaning that if $\mf{P}_A$ has $r$ edges, then
\begin{equation}
\label{eqn:graphs_claim_2_strongest}
	a(P) \leq k^{2n^{\textup{pc}}_2 + 3n^{\textup{pc}}_3 + 4n^{\textup{pc}}_4 - r + 2n^{\textup{mc}}_2 + 3n^{\textup{mc}}_3 + 4n^{\textup{mc}}_4}.
\end{equation}
However, if $v$ is polychromatic in $\mf{U}_A$, then it is clearly also polychromatic in $\mf{P}_A$, hence has at least two incident edges in $\mf{P}_A$. Thus $\mf{P}_A$ has $q$ vertices with at least two incident edges; hence $r \geq q$, which means that \eqref{eqn:graphs_claim_2_strongest} implies \eqref{eqn:graphs_claim_2} and finishes the proof of Lemma \ref{lem:graphs}.
\end{proof}

%%%%%%%%%%%%%%%%%%%%%%%%%%%%%%%%%%%%%%%%%%%%%%%%%%%%%%%%%%%%
%%%%%%%%             Subsection: Proof of Proposition
%%%%%%%%%%%%%%%%%%%%%%%%%%%%%%%%%%%%%%%%%%%%%%%%%%%%%%%%%%%%

\subsection{Proof of Proposition \ref{prop:cumulant_expansion}.}\

The goal of this subsection is to prove the following:

\begin{prop}
\label{prop:cumulant_expansion}
If 
\[
	k = \OO(N^{1/2-\delta}),
\]
then for every fixed $D, \epsilon > 0$ there exists $C_{D,\epsilon}$ with 
\[
	\P\left(\max_{j=1}^N \abs{\sum_{i=1}^k a_i \widetilde{\Delta}_{ij} y_{ij}} > \epsilon \right) \leq C_{D,\epsilon} N^{-D}.
\]
\end{prop}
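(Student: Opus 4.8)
Fix $j$ and write $S_j \defeq \sum_{i=1}^k a_i\widetilde\Delta_{ij}y_{ij} = \frac1N\sum_{i=2}^k a_i\sum_{\ell=1}^{i-1}\ip{\mathbf{y}_i,\mathbf{y}_\ell}y_{\ell j}y_{ij}$. The plan is to control the high moments of $S_j$ for each fixed $j$ (on the good event $\mc{E}_{k,N}$ of \eqref{eqn:delta_replacement_good_event}) by $C_p(k^2/N)^p$ up to logarithmic factors, and then conclude by Markov's inequality and a union bound over $j\in\{1,\dots,N\}$, using that $k^2/N = \OO(N^{-2\delta})$ so that $N(k^2/N)^p$ beats $N^{-D}$ once $p$ is large. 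A term-by-term estimate is useless: each summand has size $\asymp\sqrt{k/N}$ even on $\mc{E}_{k,N}$, so the triangle inequality gives only $|S_j|\lesssim k^{3/2}/\sqrt N$, which does not vanish for $\delta\le 1/3$. One must exploit the cancellation in the sum over $i$, and the natural device is the martingale structure of Gram--Schmidt: with $\mc{F}_i\defeq\sigma(\mathbf{y}_1,\dots,\mathbf{y}_i)$, one has $\E[\ip{\mathbf{y}_i,\mathbf{y}_\ell}y_{\ell j}y_{ij}\mid\mc{F}_{i-1}] = y_{\ell j}^2$ for $\ell<i$, since $\E[y_{iq}y_{ij}\mid\mc{F}_{i-1}]=0$ for $q\ne j$ and $\E[y_{ij}^2\mid\mc{F}_{i-1}]=1$.

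This yields a decomposition $S_j = B_j + M_j$, where $B_j \defeq \frac1N\sum_{i=2}^k a_i\sum_{\ell<i}y_{\ell j}^2$ is a ``bias'' term and $M_j \defeq \sum_{i=2}^k D_{ij}$ with $D_{ij}\defeq\frac{a_i}{N}\sum_{\ell<i}\bigl(\sum_{q\ne j}y_{iq}y_{\ell q}y_{ij}y_{\ell j} + (y_{ij}^2-1)y_{\ell j}^2\bigr)$ a martingale-difference sequence in $i$ for $(\mc{F}_i)$. The bias is trivial to handle: on $\mc{E}_{k,N}$, using $|a_i|\le a$ (recall \eqref{eqn:as_bounded}) and $|y_{\ell j}|\le\log N$, we get $|B_j|\le ak^2(\log N)^2/N = o(1)$ uniformly in $j$, so $\P(\max_j|B_j|>\epsilon/2)\le\P((\mc{E}_{k,N})^c) = \OO(e^{-c(\log N)^2})$ by Lemma~\ref{lem:delta_replacement_good_event}, for $N$ large.

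For $M_j$ I would (i) truncate against the $\mc{F}_{i-1}$-measurable restriction $\mc{E}^{<i}_{k,N}$ of the good event (setting $\widehat D_{ij}\defeq D_{ij}\1_{\mc{E}^{<i}_{k,N}}$, still a martingale difference because the indicator is $\mc{F}_{i-1}$-measurable), so that $M_j=\widehat M_j\defeq\sum_i\widehat D_{ij}$ on $\mc{E}_{k,N}$; and (ii) apply a standard Burkholder--Rosenthal martingale moment inequality $\E[\widehat M_j^{2p}]\le C_p\,\E[(\sum_i\E[\widehat D_{ij}^2\mid\mc{F}_{i-1}])^p] + C_p\sum_i\E[\widehat D_{ij}^{2p}]$. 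A short conditional-expectation computation (the cross term vanishes by oddness in $\mathbf{y}_i$) gives the exact identity
\[
	\E[D_{ij}^2\mid\mc{F}_{i-1}] = \frac{a_i^2}{N^2}\sum_{q\ne j}\Big(\sum_{\ell<i}y_{\ell j}y_{\ell q}\Big)^2 + \frac{2a_i^2}{N^2}\Big(\sum_{\ell<i}y_{\ell j}^2\Big)^2,
\]
and on $\mc{E}^{<i}_{k,N}$ the dominant contribution is the diagonal $\ell=\ell'$ part of the first sum, of size $\OO(a_i^2(i-1)(\log N)^3/N)$, while its off-diagonal part $\OO(a_i^2(i-1)^2(\log N)^3/N^{3/2})$ and the second sum $\OO(a_i^2(i-1)^2(\log N)^4/N^2)$ are both smaller precisely because $i-1\le k\ll\sqrt N$; hence $\sum_{i=2}^k\E[\widehat D_{ij}^2\mid\mc{F}_{i-1}]\le Ca^2 k^2(\log N)^3/N$ deterministically. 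For the individual moments, $D_{ij}$ is a polynomial of degree $\le 4$ in the i.i.d. Gaussians, so Gaussian hypercontractivity gives $\E[\widehat D_{ij}^{2p}]\le\E[D_{ij}^{2p}]\le C_p(\E[D_{ij}^2])^p$, and a direct second-moment computation gives $\E[D_{ij}^2]\le Ca^2(i-1)/N$, whence $\sum_{i=2}^k\E[\widehat D_{ij}^{2p}]\le C_p k(a^2k/N)^p\le C_p(a^2k^2/N)^p$ for $p\ge1$. Altogether $\E[\widehat M_j^{2p}]\le C_p(a^2k^2(\log N)^3/N)^p$.

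To conclude, $\P(\max_j|S_j|>\epsilon) \le \P(\max_j|B_j|>\epsilon/2) + \P(\max_j|M_j|>\epsilon/2)$, and $\P(\max_j|M_j|>\epsilon/2) \le \P(\max_j|\widehat M_j|>\epsilon/2) + \P((\mc{E}_{k,N})^c) \le N(2/\epsilon)^{2p}\max_j\E[\widehat M_j^{2p}] + \OO(e^{-c(\log N)^2})$. Since $k^2(\log N)^3/N = \OO(N^{-\delta})$ for $N$ large, this is $\le C_{p,\epsilon}\,N^{1-\delta p} + \OO(e^{-c(\log N)^2})$, which is $\le N^{-D}$ once $p>(D+1)/\delta$ and $N$ is large; combining with the bias estimate and letting the constant absorb small $N$ proves the Proposition. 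The only step requiring real care is the conditional-variance bookkeeping for $D_{ij}$ — isolating the genuinely leading (diagonal) term and checking that the sub-leading pieces are dominated by it, which is exactly where the hypothesis $k\ll N^{1/2}$ enters; by contrast, the cancellation in the $i$-sum, which is what defeats any crude argument, is handled for free by the martingale decomposition.
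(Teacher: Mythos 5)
Your proof is correct, but it takes a genuinely different route from the paper's. The paper proves the Proposition by Markov's inequality combined with Lemma \ref{lem:graphs_easier}, i.e.\ a direct Wick/graphical expansion of the $p$-th moment of $\sum_i a_i\widetilde{\Delta}_{ij}y_{ij}$, organized by patterns, unit graphs and path graphs --- the same machinery that is then reused, in a harder form, for Lemma \ref{lem:graphs}. You instead exploit the martingale structure explicitly: Doob-decomposing $S_j$ along the filtration $\mc{F}_i=\sigma(\mathbf{y}_1,\dots,\mathbf{y}_i)$, handling the predictable part $B_j$ by the crude bound $ak^2(\log N)^2/N$ on $\mc{E}_{k,N}$, truncating the differences by an $\mc{F}_{i-1}$-measurable part of the good event, and controlling $\E[\widehat{M}_j^{2p}]$ by Burkholder--Rosenthal together with hypercontractivity for degree-$4$ Gaussian polynomials. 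Your conditional-variance identity is exact, the subsequent bookkeeping (the diagonal part of size $a_i^2(i-1)(\log N)^3/N$ dominating the off-diagonal and $T_j^2$ pieces precisely because $k\ll\sqrt{N}$) is right, the truncation preserves the martingale-difference property because the indicator is $\mc{F}_{i-1}$-measurable, and the final Markov-plus-union-bound step delivers the stated $C_{D,\epsilon}N^{-D}$. What you lose relative to the paper is only cosmetic: your moment bound $C_p\bigl(k^2(\log N)^3/N\bigr)^p$ carries logarithmic factors that Lemma \ref{lem:graphs_easier} avoids, which is immaterial here since $k^2/N=\OO(N^{-2\delta})$; what you gain is a structural, essentially combinatorics-free argument in which the cancellation over $i$ --- the whole difficulty, as you correctly note --- is absorbed by the martingale property rather than by counting colors in graphs. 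Two small points: state explicitly that $\mc{E}^{<i}_{k,N}$ consists of the first two sub-events of \eqref{eqn:delta_replacement_good_event} restricted to indices below $i$ (these are exactly the bounds you use in the conditional-variance estimate, they are $\mc{F}_{i-1}$-measurable, and they contain $\mc{E}_{k,N}$, so $M_j=\widehat{M}_j$ on the good event); and in your motivational remark the term-by-term bound $k^{3/2}/\sqrt{N}$ fails to vanish for $\delta\leq 1/6$, i.e.\ $k\gtrsim N^{1/3}$ (matching Remark \ref{rem:jiang}), not for $\delta\leq 1/3$ --- harmless, since it plays no role in the actual argument.
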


\begin{proof}
We start with Markov's inequality
\[
	\P\left(\max_{j=1}^N \abs{\sum_{i=1}^k a_i \widetilde{\Delta}_{ij} y_{ij}} > \epsilon \right) \leq N\epsilon^{-p} \max_{j=1}^N \E\left[\abs{\sum_{i=1}^k a_i \widetilde{\Delta}_{ij} y_{ij}}^p\right],
\]
and finish by applying Lemma \ref{lem:graphs_easier} below for some large $p$. 
\end{proof}

\begin{lem}
\label{lem:graphs_easier}
If $p \geq 1$ is an integer and $a$ is as in \eqref{eqn:as_bounded}, then
\[
	\max_{j=1}^N \E\left[ \left( \sum_{i=1}^k a_i \widetilde{\Delta}_{ij} y_{ij} \right)^p \right] \leq C_p a^p \left(\frac{k}{\sqrt{N}}\right)^p.
\]
\end{lem}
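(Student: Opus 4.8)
The plan is to expand everything in the i.i.d.\ Gaussian entries $y_{ab}$ and keep track of which \emph{column} indices coincide. Since $\widetilde{\Delta}_{ij} = \frac{1}{N}\sum_{\ell < i}\langle \mathbf{y}_i,\mathbf{y}_\ell\rangle y_{\ell j}$ and $\langle\mathbf{y}_i,\mathbf{y}_\ell\rangle = \sum_{\beta=1}^N y_{i\beta}y_{\ell\beta}$, we may write $\sum_{i=1}^k a_i\widetilde{\Delta}_{ij}y_{ij} = \frac{1}{N}\sum_{\beta=1}^N T_\beta$ with $T_\beta := \sum_{\ell<i\le k} a_i\, y_{i\beta}y_{\ell\beta}y_{ij}y_{\ell j}$. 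The law of this quantity does not depend on $j$ (the columns are exchangeable), so we fix $j$; and it suffices to prove the estimate for even $p$, the general case following from $\E[X^p]\le\E[X^{2p}]^{1/2}$. I would then split off the diagonal term $\beta=j$: $\frac{1}{N}\sum_\beta T_\beta = \frac{1}{N}T_j + \frac{1}{N}\sum_{\beta\ne j}T_\beta$, and estimate the two pieces separately, recombining with $\E[|X+Y|^p]\le 2^{p-1}(\E[|X|^p]+\E[|Y|^p])$.

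The diagonal piece is crude: $T_j = \sum_{\ell<i\le k}a_i y_{ij}^2 y_{\ell j}^2$ is a sum of at most $k^2$ monomials, so $\E[T_j^p]$ expands into at most $k^{2p}$ Gaussian expectations, each of which is a product of one-dimensional moments (all factors carry the column $j$) and hence bounded by a constant $C_p$ times $a^p$. This gives $\E[(\tfrac{1}{N}T_j)^p]\le C_p a^p (k^2/N)^p \le C_p a^p (k/\sqrt N)^p$, the last step because $k/\sqrt N = \OO(N^{-\delta})$ is bounded.

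For the off-diagonal piece I would expand $\E\big[(\sum_{\beta\ne j}T_\beta)^p\big] = \sum_{\beta_1,\dots,\beta_p\ne j}\ \sum_{(\ell_a<i_a)_{a=1}^p}\ \big(\prod_a a_{i_a}\big)\,\E\big[\prod_{a=1}^p y_{i_a\beta_a}y_{\ell_a\beta_a}y_{i_a j}y_{\ell_a j}\big]$ and apply Wick's theorem to the product of $4p$ standard Gaussians. Such an expectation is $0$ unless, for every column value appearing, the Gaussians carrying it can be perfectly matched, and it is then at most $C_p$ in absolute value. The key observation — and the only place the Gram--Schmidt ordering $\ell_a<i_a$ is used — is that if some value $\beta$ occurs among $\beta_1,\dots,\beta_p$ exactly once, then the two column-$\beta$ factors $y_{i_a\beta}, y_{\ell_a\beta}$ (and these are the only column-$\beta$ factors in the whole product, since $\beta\ne j$) must be matched with each other, forcing $i_a=\ell_a$, which is impossible. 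Hence only tuples $(\beta_a)$ that are constant on the blocks of a partition $\pi$ of $\{1,\dots,p\}$ \emph{all of whose blocks have size $\ge 2$} contribute. For each such $\pi$ there are at most $N^{p/2}$ choices of the distinct $\beta$-values, and for each block $B$ the matching requirement on the corresponding column forces the multiset $\{i_a,\ell_a:a\in B\}$ to have every multiplicity even, so (forgetting the ordering) the number of admissible choices of $(i_a,\ell_a)_{a\in B}$ is at most $C_{|B|}k^{|B|}$; multiplying over blocks gives at most $C_p k^p$ index tuples. Summing over the $\OO_p(1)$ partitions, $\E\big[(\sum_{\beta\ne j}T_\beta)^p\big]\le C_p a^p N^{p/2}k^p$, hence $\E\big[(\tfrac{1}{N}\sum_{\beta\ne j}T_\beta)^p\big]\le C_p a^p (k/\sqrt N)^p$, and combining with the diagonal piece finishes the proof.

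The main obstacle is this last power count: the naive number of internal index tuples $(\ell_a<i_a)_{a=1}^p$ is of order $k^{2p}$, which would overshoot the target by a factor $k^p$; the saving of $k^{-p}$ comes precisely from the pairing constraint that the ordering $\ell_a<i_a$ imposes via the vanishing of singleton blocks, and the bookkeeping that extracts it block-by-block from the Wick expansion — in lieu of the heavier graphical formalism of Lemma~\ref{lem:graphs} — is where the care is needed.
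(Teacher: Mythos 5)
Your proof is correct, and it takes a genuinely different (and more elementary) route than the paper's. The paper treats this lemma as an easier instance of the graphical formalism built for Lemma \ref{lem:graphs}: it keeps the inner products $\ip{\mathbf{y}_{i_b},\mathbf{y}_{m_b}}$ intact, classifies index assignments into patterns, gains factors of $\sqrt{N}$ on units corresponding to monochromatic vertices of the unit graph via the partial integrations \eqref{eqn:star_rules} (Claim 1), and gains factors of $k$ on polychromatic units via the path-graph count (Claim 2). You instead expand the inner products over the coordinate index $\beta$, split off the $\beta=j$ coordinate --- where the parity argument cannot help and your crude $(k^2/N)^p$ bound suffices, using the standing assumption $k=\OO(N^{1/2})$, which the paper's proof also uses implicitly when combining its two claims --- and run a bare-hands Wick/parity count on the remaining i.i.d.\ entries: the ordering $\ell_a<i_a$ kills every tuple in which a column value $\beta\neq j$ occurs only once, which forces each $\beta$-value to be repeated (saving $N^{p/2}$ versus $N^p$) and forces even row-multiplicities within each block (saving $k^p$ versus the naive $k^{2p}$); your alphabet-overcounting bound $C_{|B|}k^{|B|}$ per block is the same device the paper uses in Lemma \ref{lem:delta2deltatilde}, and the reduction to even $p$ via $\E[X^p]\le \E[X^{2p}]^{1/2}$ is legitimate (the paper instead bounds each $\abs{\E[\mathbf{y}_P]}$ termwise, which covers all $p$ at once). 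So the mechanism is the same two pillars --- the Gram--Schmidt ordering plus Wick parity --- but executed directly on scalar entries rather than through the unit-graph/path-graph bookkeeping. What each buys: your argument is self-contained and shorter for this particular lemma; the paper's heavier formalism pays off because it simultaneously handles the harder Lemma \ref{lem:graphs}, where the summands carry the extra factors $\ip{\mathbf{y}_{m_a},\mathbf{y}_{m'_a}}$ and possible $\|\mathbf{y}\|^2$ (monochromatic-edge) contributions that would make a fully expanded coordinate-level count considerably messier.
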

\begin{proof}
This is essentially an easier analogue of the proof of Lemma \ref{lem:graphs}, so we skip the details in some steps. We expand
\[
	\E\left[ \left( \sum_{i=1}^k a_i \widetilde{\Delta}_{ij} y_{ij} \right)^p \right] = N^{-p} \sum_{i_1, \ldots, i_p=1}^k \sum_{m_1=1}^{i_1-1} \sum_{m_2=1}^{i_2-1} \cdots \sum_{m_p=1}^{i_p-1} \left( \prod_{b=1}^p a_{i_b} \right) \E\left[ \prod_{b=1}^p y_{i_b j} y_{m_b j} \ip{\mathbf{y}_{i_b},\mathbf{y}_{m_b}} \right]
\]
Inside the expectation, the only question is which indices coincide, so we write $A$ for an \emph{assignment of values} to these indices; $P$ for the associated pattern, i.e., the knowledge of which $i$ and $m$ variables coincide in $A$; $\ms{A}$ for the set of all legal assignments; and $\ms{P}$ for the set of all legal patterns. Write also
\begin{align*}
	a_A &\defeq \prod_{b=1}^p a_{i_b}, \\
	\mathbf{y}_A &\defeq \prod_{b=1}^p y_{i_b j} y_{m_b j} \ip{\mathbf{y}_{i_b},\mathbf{y}_{m_b}}.
\end{align*}
Then
\[
	\E\left[ \left( \sum_{i=1}^k a_i \widetilde{\Delta}_{ij} y_{ij} \right)^p \right] = N^{-p} \sum_{A \in \ms{A}} a_A \E[\mathbf{y}_A].
\]
As before, $\E[\mathbf{y}_A]$ only depends on the pattern of $A$, but now $a_A$ actually depends on $A$ itself; however, since $\max_{i=1}^k \abs{a_i} \leq a$, we have $\max_{A \in \ms{A}} \abs{a_A} \leq a^p$, so that
\begin{equation}
\label{eqn:easy_pattern_sum}
	\abs{\E\left[ \left( \sum_{i=1}^k a_i \widetilde{\Delta}_{ij} y_{ij} \right)^p \right]} \leq a^p N^{-p} \sum_{A \in \ms{A}}  \abs{\E[\mathbf{y}_A]} = a^p N^{-p} \sum_{P \in \ms{P}} a(P) \abs{\E[\mathbf{y}_P]}
\end{equation}
where $a(P)$ is the number of assignments with pattern $P$. We will again use graphs $\mf{G}_A$, but the basic graph now has $2p+1$ vertices and $3p$ edges: Vertices named ``$i_1$,'' ..., ``$i_p$,'' ``$m_1$,'' ..., ``$m_p$,'' plus one special vertex called ``$\mathbf{e}_j$,'' with edges along the cycle $\mathbf{e}_j - i_b - m_b - \mathbf{e}_j$ for each $b = 1, \ldots, p$. A sample colored graph with $p = 4$ is given in Figure~\ref{fig:two},
 along with a pictorial definition of unit, consisting of only two vertices this time, and the associated unit graph $\mf{U}_A$.

\begin{figure}[h!]
\begin{center}
\begin{tikzpicture}
	\draw [thick](1,3) -- (-2.5,1) -- (-2.5,0) -- (1,3);
	\draw [black, fill=color7] (-2.5,1) circle [radius=0.1];
	\draw [black, fill=color1] (-2.5,0) circle [radius=0.1];
	
	\draw [thick](1,3) -- (-0.5,0) -- (-0.5,1) -- (1,3);
	\draw [black, fill=color1] (-0.5,1) circle [radius=0.1];
	\draw [black, fill=color2] (-0.5,0) circle [radius=0.1];
	
	\draw [thick](1,3) -- (2.5,0) -- (2.5,1) -- (1,3);
	\draw [black, fill=color7] (2.5,1) circle [radius=0.1];
	\draw [black, fill=color3] (2.5,0) circle [radius=0.1];
	
	\draw [thick](1,3) -- (4.5,0) -- (4.5,1) -- (1,3);
	\draw [black, fill=color4] (4.5,1) circle [radius=0.1];
	\draw [black, fill=color6] (4.5,0) circle [radius=0.1];

	\draw [black, thick, fill=white] (1,3) circle [radius=0.1];
	\node [above] at (1,3.2) {$\mathbf{e}_j$};
	\node [left] at (-2.5,1) {$i_1$};
	\node [left] at (-0.5,1) {$i_2$};
	\node [right] at (2.5,1) {$i_3$};
	\node [right] at (4.5,1) {$i_4$};
	\node at (-2.5,-0.4) {$m_1$};
	\node at (-0.5,-0.4) {$m_2$};
	\node at (2.5,-0.4) {$m_3$};
	\node at (4.5,-0.4) {$m_4$};
	
	\draw [dashed, thick, red] (4.5,0.5) ellipse (1.1 and 2);
	\node [red] at (6.3,2) {one unit};
	
	\node[left] at (-4,1) {$\mf{G}_A$};
	
	\draw (-6,-2) -- (8,-2);
	
	\draw [rounded corners, ultra thick, color1] (-2.5,-3) -- (-.5,-3);
	\draw [ultra thick, color7] (2.5,-3) arc (-45:-135:3.55);
	\draw [black, fill=black] (-2.5,-3) circle [radius=0.1];
	\draw [black, fill=black] (-0.5,-3) circle [radius=0.1];
	\draw [black, fill=black] (2.5,-3) circle [radius=0.1];
	\draw [black, fill=black] (4.5,-3) circle [radius=0.1];
	
	\node[left] at (-4,-3) {$\mf{U}_A$};
\end{tikzpicture}
\end{center}
\caption{A $p=4$ sample graph $\mf{G}_A$ and associated unit graph $\mf{U}_A$.}
\label{fig:two}
\end{figure}
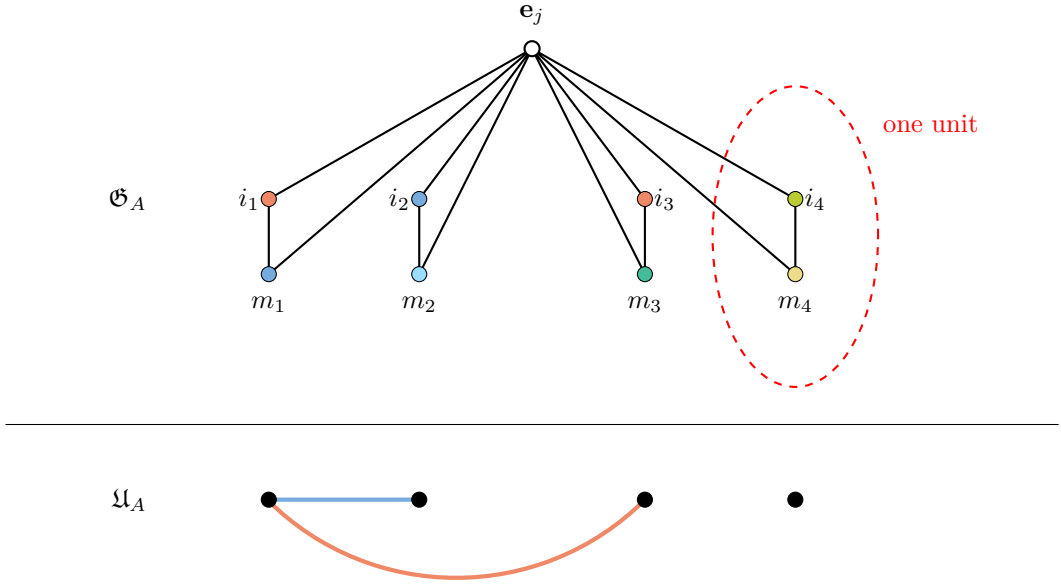

Write $q$ for the number of polychromatic vertices in $\mf{U}_A$. All units have exactly two distinct colors, so there is no more need to separate out $q = n^{\textup{pc}}_2 + n^{\textup{pc}}_3 + n^{\textup{pc}}_4$; we distinguish only polychromatic from monochromatic vertices. We make the following claims:

\begin{itemize}
\item \textbf{Claim 1:} For each $p$, there exists $C_p$ with
\begin{equation}
\label{eqn:easy_graphs_claim_1}
	\abs{\E[\mathbf{y}_P]} \leq C_p (N^{1/2})^q
\end{equation}
\item \textbf{Claim 2:} 
\begin{equation}
\label{eqn:easy_graphs_claim_2}
	a(P) \leq k^{2p-q}.
\end{equation}
\end{itemize}
Assuming these claims, we find
\[
	\sup_{P \in \ms{P}} N^{-p} a(P) \abs{\E[\mathbf{y}_P]} \leq C_p \left(\frac{k}{\sqrt{N}}\right)^p 
\]
which, in light of \eqref{eqn:easy_pattern_sum}, finishes the proof.

Now we prove Claim 1. Here the straight estimate for any unit is $\sqrt{N}$, and this is what we take for all the units corresponding to polychromatic vertices in $\mf{U}_A$. In the units corresponding to monochromatic vertices in $\mf{U}_A$, we integrate: If the vertex corresponding a unit is only incident to edges of a single color in $\mf{U}_A$, then there is one color that appears in that unit but in no other unit; we apply \eqref{eqn:star_rules} on this color to get rid of the inner product for this unit, so that the straight-estimate size of the result is $C_p$. If the vertex corresponding to a unit is isolated in $\mf{U}_A$, then each of its colors appears in that unit but in no other unit; then we apply \eqref{eqn:star_rules} as above, so that the straight-estimate size of the result is $C_p$. This proves \eqref{eqn:easy_graphs_claim_1}. 

The proof of Claim 2 here is a near-verbatim copy of the analogous argument in the proof of Lemma \ref{lem:graphs}: The estimate $k^{2p}$ would be trivial and an overcount, so we again take a path graph $\mf{P}_A$ of the unit graph $\mf{U}_A$, each edge of which genuinely reduces the entropy in colors, and which has at least $q$ edges.
\end{proof}

%%%%%%%%%%%%%%%%%%%%%%%%%%%%%%%%%%%%%%%%%%%%%%%%%%%%%%%%%%%%
%%%%%%%%             Subsection: Proof of Proposition 2.11
%%%%%%%%%%%%%%%%%%%%%%%%%%%%%%%%%%%%%%%%%%%%%%%%%%%%%%%%%%%%

\subsection{Proof of Proposition \ref{prop:undoing_gram_schmidt}}\label{sec:210}

\begin{proof}[Proof of Proposition \ref{prop:undoing_gram_schmidt}]
We give the proof of \eqref{eqn:undoing_gram_schmidt_abs} (the version with absolute values), the proof of \eqref{eqn:undoing_gram_schmidt_noabs} (the version without absolute values) being similar. Whenever $(A_j)_{j=1}^N$ and $(B_j)_{j=1}^N$ are deterministic real numbers we have
\[
	\abs{\max_{j=1}^N \abs{A_j+B_j} - \max_{j=1}^N \abs{A_j}} \leq \max_{j=1}^N \abs{B_j}.
\]
In particular
\begin{equation}
\label{eqn:for_jiang_remark}
\begin{split}
	\abs{\max_{j=1}^N \abs{\sum_{i=1}^k a_i (\sqrt{N}\gamma_{ij})^2 } - \max_{j=1}^N \abs{\sum_{i=1}^k a_i y_{ij}^2}} &\leq \max_{j=1}^N \abs{2\sum_{i=1}^k a_i (\sqrt{N}\gamma_{ij} - y_{ij})y_{ij}} + \max_{j=1}^N \abs{\sum_{i=1}^k a_i(\sqrt{N}\gamma_{ij} - y_{ij})^2} \\
	&\leq \max_{j=1}^N \abs{2\sum_{i=1}^k a_i (\sqrt{N}\gamma_{ij} - y_{ij})y_{ij}} + ak\epsilon_N(k)^2.
\end{split}
\end{equation}
For any $\delta > 0$, \eqref{eqn:jiang_epsilon_bound} gives $\lim_{N \to \infty} \P(ak\epsilon_N(k)^2 > \delta) = 0$, which handles the second term on the right-hand side. The first term on the right-hand side requires more work. It is naively bounded by $2a\epsilon_N(k) \max_{j=1}^N \sum_{i=1}^k \abs{y_{ij}}$, but this is essentially order $k\sqrt{k/N}$ from \eqref{eqn:jiang_epsilon_bound}, so we need something more sophisticated to take $k$ almost order $N^{1/2}$ as desired. The idea will be to replace $\sqrt{N}\gamma_{ij} - y_{ij}$ with $\widetilde{\Delta}_{ij}$, up to small error; these two quantities have the same naive size, but the gain is that for $\widetilde{\Delta}_{ij}$ one can apply Proposition \ref{prop:cumulant_expansion} (we do not know how to prove the analogue of Proposition \ref{prop:cumulant_expansion} with $\widetilde{\Delta}_{ij}$ replaced with $\sqrt{N}\gamma_{ij} - y_{ij}$).

Since
\[
	\sqrt{N}\gamma_{ij} - y_{ij} = -\Delta_{ij} + (y_{ij}-\Delta_{ij})\left(\sqrt{\frac{N}{\|\mathbf{w}_i\|^2}} - 1\right)
\]
(even when $i = 1$), we have
\begin{align*}
	\abs{\sum_{i=1}^k a_i(\sqrt{N}\gamma_{ij} - y_{ij})y_{ij}} &\leq \abs{\sum_{i=1}^k a_i \Delta_{ij}y_{ij}} + ak \left(\max_{i=1}^k L_i\right)  \left( \max_{i=1}^k \vertiii{\mathbf{y}_i} \right)  \left[ \left( \max_{i=1}^k \vertiii{\mathbf{y}_i} \right) + \left( \max_{i=1}^k \vertiii{\bm{\Delta}_i} \right) \right]
\end{align*}
A union bound and standard Gaussian tail estimates give
\begin{equation}
\label{eqn:jiang_gaussian_bound}
	\P\left(\max_{i=1}^N \vertiii{\mathbf{y}_i} \geq s\right) \leq \frac{N^2}{s\sqrt{2\pi}} e^{-s^2/2},
\end{equation}
which tends to zero if we take, say, $s = \log N$. For any $\delta > 0$, by our choice of $k$ and Proposition \ref{prop:jiang_bounds}, we thus have
\begin{align*}
	&\P\left(ak\left(\max_{i=1}^k L_i\right) \left(\max_{i=1}^N \vertiii{\mathbf{y}_i}\right) \left[ \left(\max_{i=1}^N \vertiii{\bm{\Delta}_i}\right) + \left(\max_{i=1}^N \vertiii{\mathbf{y}_i}\right) \right] > \delta \right) \\
	&\leq \P\left(\max_{i=1}^k L_i > \frac{\log N}{\sqrt{N}}\right) + \P\left(\max_{i=1}^N \vertiii{\mathbf{y}_i} \geq \log N \right) + \P\left(\max_{i=1}^k \vertiii{\bm{\Delta}_i} > \frac{\log N}{N^{1/4}}\right) \to 0.
\end{align*}
Furthermore, we have
\[
	\abs{\sum_{i=1}^k a_i \Delta_{ij}y_{ij}} \leq \abs{\sum_{i=1}^k a_i \widetilde{\Delta}_{ij}y_{ij}} + \abs{\sum_{i=1}^k a_i (\Delta_{ij}-\widetilde{\Delta}_{ij}) y_{ij}}.
\]
If 
\[
	\mc{E}_N := \left\{\max_{i=1}^k \max_{j=1}^N \abs{\Delta_{ij} - \widetilde{\Delta}_{ij}} \leq \frac{1}{N^{1/2}} \right\},
\]
then
\begin{align*}
	\P\left(\max_{j=1}^N \abs{\sum_{i=1}^k a_i (\Delta_{ij} - \widetilde{\Delta}_{ij}) y_{ij}} \geq \delta\right) \leq \P(akN^{-1/2}\max_{i=1}^k \vertiii{\mathbf{y}_i} > \delta) + \P(\mc{E}_N^c).
\end{align*}
The first term on the right-hand side tends to zero for any $\delta > 0$ by \eqref{eqn:jiang_gaussian_bound}, and the second term tends to zero by Proposition \ref{prop:deltadeltatilde}.

Finally, an application of Proposition \ref{prop:cumulant_expansion} finishes the proof.
\end{proof}

\begin{rem}
\label{rem:jiang}
As mentioned above, adapting arguments of Jiang \cite{Jia2005} gives an easier proof of Theorem \ref{thm:main} in the restricted case when $k_N = \OO(N^{1/3-\delta})$, which we now sketch, ignoring things like log factors and ``with high probability.''

It suffices to prove the analogue of Proposition \ref{prop:undoing_gram_schmidt} with $k_N = \OO(N^{1/3-\delta})$. As explained just following \eqref{eqn:for_jiang_remark}, the difficult task is to show
\[
	\max_{j=1}^N \abs{2\sum_{i=1}^k a_i(\sqrt{N}\gamma_{ij} - y_{ij})y_{ij}} \to 0 \quad \text{in probability.}
\]
The easy route is to move the absolute value inside, ignoring cancellations. This turns out to work for $k = \OO(N^{1/3-\delta})$, as we are currently explaining, but to get up to $k = \OO(N^{1/2-\delta})$ one needs to take the cancellations into account; in fact, this is the main novelty of Section \ref{sec:undoing_gram_schmidt}. Taking the absolute values inside, one bounds this by
\[
	\epsilon_N(k) \max_{i=1}^N \sum_{j=1}^k \abs{y_{ij}} \lesssim k \epsilon_N(k).
\]
Since Jiang's work essentially shows $\epsilon_N(k) \lesssim \sqrt{\frac{k}{N}}$ (see \eqref{eqn:jiang_epsilon_bound}), we want $k\sqrt{\frac{k}{N}}$ to tend to zero, which leads us to a threshold $k \ll N^{1/3}$.
\end{rem}

%%%%%%%%%%%%%%%%%%%%%%%%%%%%%%%%%%%%%%%%%%%%%%%%%%%%%%%%%%%%
%%%%%%%%             Subsection: Weibull
%%%%%%%%%%%%%%%%%%%%%%%%%%%%%%%%%%%%%%%%%%%%%%%%%%%%%%%%%%%%

\subsection{The Weibull case.}\
\label{subsec:weibull}

The Weibull results we claimed in \eqref{eqn:main_aneg}, i.e. the case of $\max_{i=1}^N \ip{\mathbf{u}_i,A_N\mathbf{u}_i}$ with $A_N = \diag(a_1,\ldots,a_k)$ and $a_1,\ldots,a_k < 0$, do not directly fit into the framework elsewhere in this section, since the scaling is different. (Up to log factors, the maximum of $N$ independent $\chi^2$ variables with $k$ degrees of freedom is order one, but the minimum is order $N^{-2/k}$.) In this section we give the necessary variant on our arguments. To simplify the statement and proof, we discuss minima and take all $a_i$ positive, rather than maxima with all $a_i$ negative. The proof of \eqref{eqn:main_aneg} is just a combination of the following lemma with the Gaussian computations given in Lemma \ref{lem:rankk_allneg}.

\begin{lem}
\label{lem:weibull}
Fix $k$, and fix $a_1, \ldots, a_k$ all positive. Then 
\[
    N^{2/k}\left[ \min_{j=1}^N \left( \sum_{i=1}^k a_i (\sqrt{N}\gamma_{ij})^2\right) - \min_{j=1}^N \left( \sum_{i=1}^k a_i y_{ij}^2 \right)\right] \overset{N \to \infty}{\to} 0 \quad \text{in probability.}
\]
\end{lem}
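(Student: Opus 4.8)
The plan is to follow the localization philosophy of the rest of this section, but with one essential new input: the columns $j$ that matter for the \emph{minimum} are exactly those on which $y_{1j},\dots,y_{kj}$ (hence $\gamma_{1j},\dots,\gamma_{kj}$) are all atypically small, of order $N^{-1/k}$, and on such columns the Gram--Schmidt error $\sqrt N\gamma_{ij}-y_{ij}$ is \emph{much} smaller than its worst-case size $\epsilon_N(k)=O(N^{-1/2+\epsilon})$ — it is itself of order $N^{-1/k-1/2+\epsilon}$. This is what lets the scaling $N^{2/k}$ be absorbed. Throughout write $S^y_j:=\sum_{i=1}^k a_i y_{ij}^2$ and $S^\gamma_j:=\sum_{i=1}^k a_i(\sqrt N\gamma_{ij})^2$, and fix a small $\epsilon_0>0$ to be chosen at the very end.

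\emph{Step 1 (the minimum is genuinely small).} First I would record the elementary extreme-value fact that, with high probability, $\min_j S^y_j\le N^{-2/k+\epsilon_0}$. Since the $a_i$ are positive, $S^y_1\le(\max_i a_i)\chi^2_k$, so $\P(S^y_1\le t)\ge c_k t^{k/2}$ for all small $t$; hence $\P(\min_j S^y_j>t)\le(1-\P(S^y_1\le t))^N\le\exp(-c_k N t^{k/2})$, which is superpolynomially small at $t=N^{-2/k+\epsilon_0}$.

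\emph{Step 2 (the error is quadratically small on small columns).} This is the heart of the matter. Work on the high-probability event $\mathcal G$ on which: (i) $\epsilon_N(k)\le N^{-1/2+\epsilon_0}$ and $\max_{i\le k}L_i\le N^{-1/2+\epsilon_0}$ (Proposition \ref{prop:jiang_bounds}, with $k$ fixed so $\sqrt{k/N}=O(N^{-1/2})$); (ii) $\|\mathbf{w}_i\|^2\ge N/2$ for $i\le k$ (a consequence of (i)); and (iii) $\max_{\ell<i\le k}\abs{\ip{\mathbf{y}_i,\bm{\gamma}_\ell}}\le\log N$ (a union bound over $\binom k2$ conditionally standard Gaussians). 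I claim that on $\mathcal G$, for every column $j$ with $\min(S^y_j,S^\gamma_j)\le N^{-2/k+\epsilon_0}$ one has $\abs{S^\gamma_j-S^y_j}\le C_k N^{-2/k-1/2+c_k\epsilon_0}$. Indeed, either hypothesis forces $\max_{i\le k}\abs{y_{ij}}\le N^{-1/k+2\epsilon_0}$ (directly if $S^y_j$ is small; and if $S^\gamma_j$ is small, because $\abs{y_{ij}}\le\abs{\sqrt N\gamma_{ij}}+\epsilon_N(k)$). An induction on $i=1,\dots,k$ using $\gamma_{ij}=\|\mathbf{w}_i\|^{-1}\bigl(y_{ij}-\sum_{\ell<i}\ip{\mathbf{y}_i,\bm{\gamma}_\ell}\gamma_{\ell j}\bigr)$ then gives $\abs{\gamma_{ij}}\le C_k N^{-1/k-1/2+c_k\epsilon_0}$, so $\abs{\Delta_{ij}}=\bigl|\sum_{\ell<i}\ip{\mathbf{y}_i,\bm{\gamma}_\ell}\gamma_{\ell j}\bigr|$ is of the same order — in particular $\ll\abs{y_{ij}}$. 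Feeding this and the bound on $L_i$ into the identity $\sqrt N\gamma_{ij}-y_{ij}=-\Delta_{ij}+(y_{ij}-\Delta_{ij})(\sqrt{N/\|\mathbf{w}_i\|^2}-1)$ used in Section \ref{sec:210} yields $\abs{\sqrt N\gamma_{ij}-y_{ij}}\le C_k N^{-1/k-1/2+c_k\epsilon_0}$; multiplying by $\abs{\sqrt N\gamma_{ij}}+\abs{y_{ij}}=O(N^{-1/k+2\epsilon_0})$ and summing the telescoped identity $S^\gamma_j-S^y_j=\sum_i a_i(\sqrt N\gamma_{ij}-y_{ij})(\sqrt N\gamma_{ij}+y_{ij})$ proves the claim.

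\emph{Step 3 (conclusion).} On $\mathcal G$ intersected with the event of Step 1, $j^\star:=\arg\min_j S^y_j$ satisfies the hypothesis of Step 2, so $\min_j S^\gamma_j\le S^\gamma_{j^\star}\le S^y_{j^\star}+C_k N^{-2/k-1/2+c_k\epsilon_0}\le N^{-2/k+\epsilon_0}$ for $N$ large; hence $j^\dagger:=\arg\min_j S^\gamma_j$ also satisfies the hypothesis of Step 2. Comparing the two minima at $j^\star$ and $j^\dagger$ then gives
\[
	\abs{\min_j S^\gamma_j-\min_j S^y_j}\le C_k N^{-2/k-1/2+c_k\epsilon_0},
\]
so $N^{2/k}\abs{\min_j S^\gamma_j-\min_j S^y_j}\le C_k N^{-1/2+c_k\epsilon_0}\to0$ once $\epsilon_0$ is chosen with $c_k\epsilon_0<1/2$, proving the lemma. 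The main obstacle is Step 2 — in particular, resisting the naive bound $\abs{\min_j S^\gamma_j-\min_j S^y_j}\le\max_j\abs{S^\gamma_j-S^y_j}=O(N^{-1/2+\epsilon})$, which is useless here for $k\le4$; one genuinely must exploit that the Gram--Schmidt correction is of order $N^{-1/k-1/2}$, rather than its global size $\epsilon_N(k)$, on exactly the columns that can attain the minimum.
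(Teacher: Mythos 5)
Your proposal is correct and follows essentially the same route as the paper's proof (split there into Lemmas \ref{lem:weibull_hard_part} and \ref{lem:weibull_easy_part}): localize at the two minimizing columns, where on a high-probability event every relevant entry is of size $N^{-1/k}$ up to $N^{o(1)}$ factors, so that the Gram--Schmidt correction there is of size $N^{-1/k-1/2}$ and the difference of the quadratic forms is $O(N^{-2/k-1/2+o(1)})$, beating the $N^{2/k}$ rescaling. The only differences are organizational: you absorb the $\|\mathbf{w}_i\|$-normalization into the pointwise estimate via $L_i$ instead of treating it separately as in Lemma \ref{lem:weibull_easy_part}, and you justify the smallness of $\min_j \sum_i a_i y_{ij}^2$ by a direct elementary tail bound rather than by invoking the limiting law from Lemma \ref{lem:rankk_allneg}.
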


This result follows from Lemmas \ref{lem:weibull_hard_part} and \ref{lem:weibull_easy_part} below. The former is longer than the latter, but the latter relies in a crucial way on the former. 

\begin{lem}
\label{lem:weibull_hard_part}
For fixed $k$, we have
\[
	N^{2/k} \left[ \min_{j=1}^N \left(\sum_{i=1}^k a_i w_{ij}^2 \right) - \min_{j=1}^N \left( \sum_{i=1}^k a_iy_{ij}^2\right)\right] \overset{N \to \infty}{\to} 0 \quad \text{in probability}.
\]
\end{lem}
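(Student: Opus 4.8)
The plan is to reduce the lemma to a single coordinatewise estimate: the Gram--Schmidt error $\Delta_{ij}=y_{ij}-w_{ij}$ at coordinate $j$ is not merely of the uniform size $N^{-1/2}$ up to logs, but is controlled by $\max_{\ell\le k}\abs{y_{\ell j}}$, so it is automatically tiny precisely at those coordinates $j$ where the $\mathbf{y}$-entries are small, which is exactly where the two minima are attained. (Using only the uniform bound $\abs{\Delta_{ij}}\lesssim N^{-1/2}$ would cost a factor $N^{2/k}\cdot N^{-1/2}$, which diverges when $k<4$.)

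First I would work on the good event $\mc{G}_N$ on which $\|\mathbf{w}_\ell\|^2\ge N/2$ for all $\ell\le k$ (which holds with probability $1-\OO(e^{-c(\log N)^2})$ by \eqref{eqn:jiang_L_bound}) and $\abs{\ip{\mathbf{y}_i,\bm{\gamma}_\ell}}\le\log N$ for all $\ell<i\le k$ (which holds with the same probability, since conditionally on $\mathbf{y}_1,\dots,\mathbf{y}_{i-1}$ the variable $\ip{\mathbf{y}_i,\bm{\gamma}_\ell}$ is a standard Gaussian, and there are only finitely many pairs). On $\mc{G}_N$, from $\gamma_{\ell j}=w_{\ell j}/\|\mathbf{w}_\ell\|$ and $w_{\ell j}=y_{\ell j}-\Delta_{\ell j}$ we get $\abs{\gamma_{\ell j}}\le\frac{2}{\sqrt N}(\abs{y_{\ell j}}+\abs{\Delta_{\ell j}})$, hence from $\Delta_{ij}=\sum_{\ell<i}\ip{\mathbf{y}_i,\bm{\gamma}_\ell}\gamma_{\ell j}$,
\[
	\abs{\Delta_{ij}}\le\frac{2\log N}{\sqrt N}\sum_{\ell<i}\bigl(\abs{y_{\ell j}}+\abs{\Delta_{\ell j}}\bigr).
\]
A one-line discrete Gr\"onwall induction on $i$ (legitimate once $N$ is large enough that $\tfrac{2\log N}{\sqrt N}\le\tfrac{1}{2k}$, using that $k$ is fixed) then gives, for every $j$,
\begin{equation*}
	\max_{1\le i\le k}\abs{\Delta_{ij}}\le\frac{C_k\log N}{\sqrt N}\,\max_{1\le\ell\le k}\abs{y_{\ell j}}.\tag{$\star$}
\end{equation*}

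Next I would convert $(\star)$ into a bound on the difference of the quadratic forms. Writing $w_{ij}^2-y_{ij}^2=-\Delta_{ij}(w_{ij}+y_{ij})$, using $\abs{w_{ij}+y_{ij}}\le\abs{\Delta_{ij}}+2\abs{y_{ij}}\le 3\max_\ell\abs{y_{\ell j}}$ for $N$ large, and then $\max_\ell\abs{y_{\ell j}}^2\le\sum_\ell y_{\ell j}^2\le(\min_i a_i)^{-1}\sum_i a_iy_{ij}^2$, I obtain for all $j$ on $\mc{G}_N$
\begin{equation*}
	\abs{\sum_{i=1}^k a_i\bigl(w_{ij}^2-y_{ij}^2\bigr)}\le\frac{C'_k\log N}{\sqrt N}\sum_{i=1}^k a_iy_{ij}^2.\tag{$\star\star$}
\end{equation*}
Now let $j^\ast$ (resp.\ $\hat\jmath$) minimize $\sum_i a_iy_{ij}^2$ (resp.\ $\sum_i a_iw_{ij}^2$) over $j$, and write $m_Y,m_W$ for the two minima, $\epsilon_N\defeq C'_k\log N/\sqrt N$. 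Comparing at $j^\ast$ and using $(\star\star)$ gives $m_W\le m_Y(1+\epsilon_N)$; comparing at $\hat\jmath$, $(\star\star)$ yields $(1-\epsilon_N)\sum_i a_iy_{i\hat\jmath}^2\le m_W$, so $m_Y\le\sum_i a_iy_{i\hat\jmath}^2\le m_W/(1-\epsilon_N)$, i.e.\ $m_W\ge(1-\epsilon_N)m_Y$. Hence on $\mc{G}_N$ we have $\abs{m_W-m_Y}\le\epsilon_N m_Y$.

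Finally, $N^{2/k}m_Y=N^{2/k}\min_{j\le N}\sum_i a_iy_{ij}^2$ is a tight sequence of random variables: the $\sum_i a_iy_{ij}^2$ are i.i.d.\ with $\P(\sum_i a_iy_{ij}^2\le x)\sim c\,x^{k/2}$ as $x\to0^+$, so $N^{2/k}m_Y$ converges in distribution to a Weibull-type law (this is the content of the Gaussian computation in Lemma~\ref{lem:rankk_allneg}, and is in any case elementary). Therefore, on $\mc{G}_N$,
\[
	N^{2/k}\abs{m_W-m_Y}\le\epsilon_N\cdot N^{2/k}m_Y=\frac{C'_k\log N}{\sqrt N}\cdot\OO_{\P}(1)\overset{N\to\infty}{\to}0,
\]
and since $\P(\mc{G}_N^c)\to0$ this gives convergence to $0$ in probability, proving the lemma. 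The main obstacle is the self-improving estimate $(\star)$: one must carry the coordinate-$j$ dependence all the way through the Gram--Schmidt recursion rather than invoke the uniform-in-$j$ error bounds, so that the error inherits the smallness of $\max_\ell\abs{y_{\ell j}}$ at the minimizing coordinate; everything after that is bookkeeping with the good event and the tightness fact.
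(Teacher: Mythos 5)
Your proof is correct, and the key idea is the same one the paper exploits: the Gram--Schmidt error $\Delta_{ij}$ at a fixed coordinate $j$ inherits, through the triangular recursion, the smallness of the entries at that coordinate, so the comparison is much better than the uniform-in-$j$ bound at exactly the coordinates where the minima live. The implementation, however, is genuinely different and arguably cleaner. The paper first puts the event $\{N^{2/k}\min_j\sum_i a_i y_{ij}^2\le\log N\}$ into its good event, deduces from it \emph{absolute} bounds $\max_i\abs{y_{ij_\ast}}\le C(\log N)^C N^{-1/k}$ and then $\max_i\abs{w_{ij_\ast}}\le C(\log N)^C N^{-1/k}$, $\max_i\abs{\Delta_{ij_\ast}}\le C(\log N)^C N^{-1/2-1/k}$ at the $y$-minimizer, bounds the difference of the quadratic forms there, and then repeats the argument at the $w$-minimizer $j_\#$ (using the one-sided bound just obtained to control $m_W$). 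Your route instead establishes the \emph{relative} estimate $(\star\star)$, $\abs{\sum_i a_i(w_{ij}^2-y_{ij}^2)}\le\epsilon_N\sum_i a_i y_{ij}^2$ with $\epsilon_N=C'_k\log N/\sqrt N$, uniformly over all coordinates $j$; this makes the two-minimizer bookkeeping immediate ($\abs{m_W-m_Y}\le\epsilon_N m_Y$), avoids putting the size of the minimum into the good event, and defers all distributional input to the single tightness statement $N^{2/k}m_Y=\OO_{\P}(1)$, which both you and the paper take from Lemma \ref{lem:rankk_allneg}. Your good event ($\|\mathbf{w}_\ell\|^2\ge N/2$ via \eqref{eqn:jiang_L_bound}, plus $\abs{\ip{\mathbf{y}_i,\bm{\gamma}_\ell}}\le\log N$ by conditioning, both with superpolynomially small failure probability for fixed $k$) is also slightly leaner than the paper's $\mc{E}_N$. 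The trade-off is minor: the paper's pointwise analysis yields explicit polynomial rates $N^{2/k}(\log N)^C N^{-1/2-2/k}$ for the difference at the minimizers, while your multiplicative bound gives the (equally sufficient) rate $\epsilon_N\cdot\OO_{\P}(1)$; both depend on the strict positivity of the $a_i$'s, which is part of the standing assumptions of Lemma \ref{lem:weibull}.
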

\begin{proof}
Since $\mathbf{w}_1 = \mathbf{y}_1$, when $k = 1$ there is nothing to prove. In the remainder we assume $k \geq 2$.

Consider the good event 
\[
	\mc{E}_N = \{\text{for all } j = 2, \ldots, k \text{ and all } i = 1, \ldots, j-1, \abs{\ip{\mathbf{y}_j, \bm{\gamma}_i}} \leq (\log N)^4 \} \cap \left\{N^{2/k} \min_{j=1}^N \left( \sum_{i=1}^k a_i y_{ij}^2 \right) \leq  \log N\right\}.
\]
We make two claims: First, on the event $\mc{E}_N$, for every $\epsilon > 0$ there exists $N_0(\epsilon)$ such that for all $N \geq N_0(\epsilon)$ we have
\begin{equation}
\label{eqn:weibull_in_prob}
	\abs{N^{2/k} \left[ \min_{j=1}^N \left(\sum_{i=1}^k a_i w_{ij}^2 \right) - \min_{j=1}^N \left( \sum_{i=1}^k a_iy_{ij}^2\right)\right]} \leq \epsilon.
\end{equation}
Second, $\lim_{N \to \infty} \P(\mc{E}_N) = 1$. Notice that these two claims suffice to prove the lemma. 

We prove the first claim, and assume tacitly in the following that we work on $\mc{E}_N$. Throughout, we write $C_{k,a}$ for positive constants which depend on $(a_1,\ldots,a_k)$ and/or on $k$, but not on $N$, and which may change from line to line; we write $C$ for absolute constants which change from line to line. Write $j_\ast$ for the (possibly non-unique) minimizing index for the $y$ problem, i.e., $\min_{j=1}^N \sum_{i=1}^k a_i y_{ij}^2 = \sum_{i=1}^k a_i y_{ij_\ast}^2$. From the second set in the definition of $\mc{E}_N$  and the 
strict positivity of all $a_i$ this implies
\begin{equation}
\label{eqn:weibully}
	\max_{i=1}^k \abs{y_{ij_\ast}} \leq C_{k,a} \frac{(\log N)^C}{N^{1/k}}.
\end{equation}
From the first set in the definition of $\mc{E}_N$, we find
\begin{equation}
\label{eqn:weibull_delta_initial}
	\abs{\Delta_{ij_\ast}} \leq \frac{1}{\sqrt{N}} \sum_{\ell=1}^{i-1} \abs{\ip{\mathbf{y}_i,\bm{\gamma}_\ell} w_{\ell j_\ast}} \leq \frac{(\log N)^C}{\sqrt{N}} \sum_{\ell=1}^{i-1} \abs{w_{\ell j_\ast}},
\end{equation}
and hence, via \eqref{eqn:weibully},
\[
	\abs{w_{ij_\ast}} \leq \abs{y_{ij_\ast}} + \abs{\Delta_{ij_\ast}} \leq C_a \frac{(\log N)^C}{N^{1/k}} + \frac{(\log N)^C}{\sqrt{N}} \sum_{\ell=1}^{i-1} \abs{w_{\ell j_\ast}}.
\]
From the triangular structure of this inequality, we find
\[
	\max_{i=1}^k \abs{w_{i j_\ast}} \leq C_{k,a} \frac{(\log N)^C}{N^{1/k}}.
\]
Plugging this back into \eqref{eqn:weibull_delta_initial} yields
\begin{equation}
\label{eqn:weibull_delta}
	\max_{i=1}^k \abs{\Delta_{ij_\ast}} \leq C_{k,a} \frac{(\log N)^{C}}{N^{\frac{1}{2}+\frac{1}{k}}}.
\end{equation}
We use \eqref{eqn:weibull_delta} along with \eqref{eqn:weibully} to estimate
\begin{align}
	N^{2/k} \left[ \min_{j=1}^N \left(\sum_{i=1}^k a_i w_{ij}^2 \right) - \min_{j=1}^N \left( \sum_{i=1}^k a_iy_{ij}^2\right)\right] &\leq N^{2/k} \left[ \sum_{i=1}^k a_i w_{ij_\ast}^2 - \sum_{i=1}^k a_iy_{ij_\ast}^2\right]  \nonumber  \\
	&= N^{2/k} \left[ \sum_{i=1}^k a_i (-2y_{ij_\ast} \Delta_{ij_\ast} + \Delta_{ij_\ast}^2)\right]  \nonumber  \\
	&\leq C_{k,a} N^{2/k}(\log N)^C \left( \frac{1}{N^{\frac{1}{2}+\frac{2}{k}}}+ \frac{1}{N^{1+\frac{2}{k}}}\right).
	\label{eqn:upper}
\end{align}
This tends to zero, which is a one-sided inequality towards \eqref{eqn:weibull_in_prob} (in the sense that there is no absolute value on the left-hand side of \eqref{eqn:upper}). For the other side, we write $j_\#$ for the (possibly non-unique) minimizing index for the $w$ problem, i.e., $\min_{j=1}^N \sum_{i=1}^k a_i w_{ij}^2 = \sum_{i=1}^k a_i w_{ij_\#}^2$. To estimate this minimum from above we use~\eqref{eqn:upper} together with the second set in the definition of $\mc{E}_N$. This implies
\begin{equation}
\label{eqn:weibullw}
	\max_{i=1}^k \abs{w_{ij_\#}} \leq C_a \frac{(\log N)^C}{N^{1/k}},
\end{equation}
thus
\begin{align*}
	\abs{\Delta_{ij_\#}} &\leq C_{k,a} \frac{(\log N)^{C}}{N^{\frac{1}{2}+\frac{1}{k}}} \quad \text{for each} \quad i = 1, \ldots, k, \\
	\abs{y_{ij_\#}} &\leq \abs{w_{ij_\#}} + \abs{\Delta_{ij_\#}} \leq C_{k,a} \frac{(\log N)^C}{N^{1/k}} \quad \text{for each} \quad i = 1, \ldots, k.
\end{align*}
Like before, these estimates give
\begin{align*}
	N^{2/k} \left[ \min_{j=1}^N \left(\sum_{i=1}^k a_i y_{ij}^2 \right) - \min_{j=1}^N \left( \sum_{i=1}^k a_iw_{ij}^2\right)\right] &\leq N^{2/k} \left[ \sum_{i=1}^k a_i y_{ij_\#}^2 - \sum_{i=1}^k a_iw_{ij_\#}^2\right] \\
	&= N^{2/k} \left[ \sum_{i=1}^k a_i (2y_{ij_\#} \Delta_{ij_\#} - \Delta_{ij_\#}^2)\right] \\
	&\leq C_{k,a} N^{2/k} (\log N)^C \left( \frac{1}{N^{\frac{1}{2}+\frac{2}{k}}}+ \frac{1}{N^{1+\frac{2}{k}}}\right),
\end{align*}
which gives the other side of, and therefore completes the proof of, \eqref{eqn:weibull_in_prob}.

It remains only to show $\P(\mc{E}_N) \to 1$. We have 
\[
\P\Big(N^{2/k} \min_{j=1}^N \sum_{i=1}^k a_i y_{ij}^2 \leq \log N\Big) \to 1,
\]
 since the Gaussian computations in Lemma \ref{lem:rankk_allneg} show that the random variable $N^{2/k} \min_{j=1}^N \sum_{i=1}^k a_i y_{ij}^2$ has a limit in distribution. For the inner products, we note that on the good event $\mc{E}_{k,N}$ from \eqref{eqn:delta_replacement_good_event}, we have $\|\mathbf{w}_i\|^{-1} \leq N^{-1/2}(1+L_i) \leq CN^{-1/2}$ and
 \[
 	\abs{\ip{\mathbf{y}_j,\bm{\gamma}_i}} = \frac{1}{\|\mathbf{w}_i\|} \abs{\ip{\mathbf{y}_j,\mathbf{w}_i}} \leq CN^{-1/2}(\abs{\ip{\mathbf{y}_j,\mathbf{y}_i}} + \|\mathbf{y}_j\|\|\bm{\Delta}_i\|) \leq C(\log N)^3
 \]
 for all $j = 2, \ldots, k$ and all $i = 1, \ldots, j-1$; since Lemma \ref{lem:delta_replacement_good_event} showed $\P(\mc{E}_{k,N}) = 1- \oo(1)$, this completes the proof that $\P(\mc{E}_N) = 1 - \oo(1)$. 
\end{proof}

\begin{lem}
\label{lem:weibull_easy_part}
For fixed $k$, we have
\[
	N^{2/k} \left[ \min_{j=1}^N \left(\sum_{i=1}^k a_i (\sqrt{N} \gamma_{ij})^2 \right) - \min_{j=1}^N \left( \sum_{i=1}^k a_iw_{ij}^2\right)\right] \overset{N \to \infty}{\to} 0 \quad \text{in probability}.
\]
\end{lem}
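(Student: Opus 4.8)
The plan is to exploit the fact that $\sqrt{N}\gamma_{ij}$ and $w_{ij}$ differ only by the scalar factor $\sqrt{N/\|\mathbf{w}_i\|^2} = 1+\xi_i$, where $\xi_i \defeq \sqrt{N/\|\mathbf{w}_i\|^2}-1$ satisfies $|\xi_i| = L_i$, and that $L_i$ is uniformly tiny over the relevant range $i \le k$. Since $(\sqrt{N}\gamma_{ij})^2 = (1+\xi_i)^2 w_{ij}^2$, and since all $a_i > 0$ and $w_{ij}^2 \ge 0$, one obtains for each column $j$ the two-sided multiplicative bound
\[
	\Bigl(1 - C\max_{i \le k}L_i\Bigr)\sum_{i=1}^k a_i w_{ij}^2 \;\le\; \sum_{i=1}^k a_i (\sqrt{N}\gamma_{ij})^2 \;\le\; \Bigl(1 + C\max_{i \le k}L_i\Bigr)\sum_{i=1}^k a_i w_{ij}^2,
\]
valid once $\max_{i \le k} L_i$ is smaller than some absolute constant. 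Taking the minimum over $j$ on each of the three sides — using positivity in the lower bound to replace $\sum_i a_i w_{ij}^2$ by $\min_j \sum_i a_i w_{ij}^2$ — and then subtracting $\min_j \sum_i a_i w_{ij}^2$ yields
\[
	\Bigl| \min_{j=1}^N \Bigl(\sum_{i=1}^k a_i (\sqrt{N}\gamma_{ij})^2\Bigr) - \min_{j=1}^N \Bigl(\sum_{i=1}^k a_i w_{ij}^2\Bigr) \Bigr| \;\le\; C\Bigl(\max_{i \le k} L_i\Bigr) \min_{j=1}^N \Bigl(\sum_{i=1}^k a_i w_{ij}^2\Bigr).
\]

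Next I would multiply this inequality by $N^{2/k}$ and bound the two factors on the right separately. By \eqref{eqn:jiang_L_bound} (equivalently, on the good event $\mc{E}_{k,N}$ of \eqref{eqn:delta_replacement_good_event}, which has probability $1-\oo(1)$ by Lemma \ref{lem:delta_replacement_good_event}), we have $\max_{i \le k} L_i \le \log N/\sqrt{N}$ with probability tending to one; in particular $\max_{i \le k} L_i \to 0$, which also legitimizes the displayed comparison for $N$ large. For the other factor, Lemma \ref{lem:weibull_hard_part} gives $N^{2/k}\min_j \sum_i a_i w_{ij}^2 = N^{2/k}\min_j \sum_i a_i y_{ij}^2 + \oo_\P(1)$, while the Gaussian computation of Lemma \ref{lem:rankk_allneg} shows that $N^{2/k}\min_j \sum_i a_i y_{ij}^2$ converges in distribution, hence is $\OO_\P(1)$. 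Therefore $N^{2/k}$ times the right-hand side above is $\OO_\P(\log N/\sqrt{N}) = \oo_\P(1)$, which is exactly the assertion.

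There is no real difficulty here: in contrast with the comparison of $w$ against $y$ carried out in Lemma \ref{lem:weibull_hard_part}, where the Gram--Schmidt shift $\bm{\Delta}$ must genuinely be estimated at the minimizing column, the passage from $w$ to $\sqrt{N}\gamma$ is a pure rescaling by a factor $1 + \OO(\log N/\sqrt{N})$ that is uniform in $i \le k$, so the minimum can only change by a multiplicative factor of that same size. The one point that deserves attention is that the comparison must be multiplicative rather than additive: an additive estimate of the form $\bigl|\min_j(\cdots) - \min_j(\cdots)\bigr| \le \max_j \bigl|\cdots\bigr|$ would cost a factor $\max_j \sum_i a_i w_{ij}^2$, which is of order one rather than order $N^{-2/k}$, and hence would be hopelessly lossy after multiplication by $N^{2/k}$. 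It is precisely the positivity of the $a_i$ that makes the multiplicative comparison available.
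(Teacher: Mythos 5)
Your proof is correct and follows essentially the same route as the paper: positivity of the $a_i$'s turns the passage from $w_{ij}$ to $\sqrt{N}\gamma_{ij}$ into a uniform rescaling, the difference of minima is bounded by $\bigl(\max_{i\le k}\text{rescaling error}\bigr)\cdot N^{2/k}\min_j\sum_i a_i w_{ij}^2$, and the second factor is tight by combining Lemma \ref{lem:weibull_hard_part} with the distributional limit from Lemma \ref{lem:rankk_allneg}. The only (immaterial) difference is that you control the rescaling error via Jiang's bound \eqref{eqn:jiang_L_bound}, whereas the paper shows $\max_{i\le k}\abs{N\|\mathbf{w}_i\|^{-2}-1}\to 0$ directly from the exact $\chi^2_{N-i+1}$ law of $\|\mathbf{w}_i\|^2$ (a finite maximum, so a second-moment computation suffices).
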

\begin{proof}
Using the positivity of $a_i$'s, one can compute
\begin{align*}
	\abs{N^{2/k} \left[ \min_{j=1}^N \left(\sum_{i=1}^k a_i (\sqrt{N} \gamma_{ij})^2 \right) - \min_{j=1}^N \left( \sum_{i=1}^k a_iw_{ij}^2\right)\right]} &\leq \left( \max_{i=1}^k \abs{ \frac{N}{\|\mathbf{w}_i\|^2}-1 } \right) N^{2/k} \min_{j=1}^N \left(\sum_{i=1}^k a_i w_{ij}^2\right) \\
	&\eqdef \left( \max_{i=1}^k \abs{ \frac{N}{\|\mathbf{w}_i\|^2}-1 } \right) X_N.
\end{align*}
Since Lemma \ref{lem:rankk_allneg} below shows that the random variables $Y_N \defeq N^{2/k} \min_{j=1}^N \left(\sum_{i=1}^k a_i y_{ij}^2 \right)$ have a limit in distribution, and Lemma \ref{lem:weibull_hard_part} showed that $Y_N - X_N$ tends to zero in probability, the random variables $X_N$ also have a limit in distribution. Thus it suffices to show $\max_{i=1}^k \abs{N\|\mathbf{w}_i\|^{-2} - 1}$ tends to zero in probability, and since the maximum has only $k$ terms, we can show that $\abs{ N\|\mathbf{w}_i\|^{-2} - 1}$ tends to zero in probability for any fixed $i$. But the variable $\|\mathbf{w}_i\|^2$ is distributed as a $\chi^2$ with $N-i+1$ degrees of freedom (see, e.g., the proof of \cite[Lemma 3.6]{Jia2005}); hence the second moment of $N\|\mathbf{w}_i\|^{-2} - 1$ has an explicit form, which tends to zero for every fixed $i$.
\end{proof}

%%%%%%%%%%%%%%%%%%%%%%%%%%%%%%%%%%%%%%%%%%%%%%%%%%%%%%%%%%%%
%%%%%%%%%%%%%%%%%%%%%%%%%%%%%%%%%%%%%%%%%%%%%%%%%%%%%%%%%%%%
%%%%%%%%
%%%%%%%%             Section: Gaussian computations: fixed rank
%%%%%%%%
%%%%%%%%%%%%%%%%%%%%%%%%%%%%%%%%%%%%%%%%%%%%%%%%%%%%%%%%%%%%
%%%%%%%%%%%%%%%%%%%%%%%%%%%%%%%%%%%%%%%%%%%%%%%%%%%%%%%%%%%%

\section{Gaussian Computations: Fixed Rank}
\label{sec:gaussian_fixed}

The punchline of Section \ref{sec:undoing_gram_schmidt} is that, in order to compute certain extremal statistics related to Haar orthogonal entries, it suffices to compute the same extremal statistics with the Haar entries replaced by rescaled independent Gaussians.  
  These Gaussian calculations will be done slightly differently depending on the signature structure of $A$,
  and by orthogonal invariance we may assume that $A$ is already diagonal.
 We thus start our proof of Theorem \ref{thm:main_fixed} by making Gaussian computations in the following three lemmas, which handle, respectively, the case where all eigenvalues of $A$ are positive; the case where some are positive and some are negative; and the case where all are negative. 

\begin{lem}
\label{lem:rankk_allpos}
Fix $k \in \N$ and real numbers $a_1, \ldots, a_k$, which are all positive, and write $A_N = \diag(a_1, \ldots, a_k, 0, \ldots, 0)$. With the same constants defined in \eqref{eqn:defmult}, \eqref{eqn:defc}, and \eqref{eqn:defcbar}, and with $\Lambda$ a Gumbel-distributed random variable, we have
\[
	\frac{1}{2a} \max_{i=1}^N \ip{\mathbf{y}_i, A_N \mathbf{y}_i} - \log N + \left(1-\frac{m}{2}\right) \log \log N + c_m(a_1, \ldots, a_k) \overset{N \to \infty}{\to} \Lambda \quad \text{in distribution}.
\]
\end{lem}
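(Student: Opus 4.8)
The plan is to identify $\max_{i=1}^N \ip{\mathbf{y}_i, A_N\mathbf{y}_i}$ as the maximum of $N$ i.i.d.\ generalized chi-squared variables and then invoke classical extreme value theory. Since $A_N = \diag(a_1,\ldots,a_k,0,\ldots,0)$ and the $\mathbf{y}_i\sim\mathcal{N}(0,\Id_{N\times N})$ are i.i.d., we have $\ip{\mathbf{y}_i,A_N\mathbf{y}_i}=\sum_{j=1}^k a_j y_{ij}^2=:S_i$, so the $S_i$ are i.i.d.\ copies of $S:=\sum_{j=1}^k a_j Z_j^2$ with $Z_1,\ldots,Z_k$ i.i.d.\ standard Gaussians. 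The whole content is then (a) sharp right-tail asymptotics for $S$, and (b) feeding them into a Gumbel limit law for $M_N:=\max_{i=1}^N S_i$.

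For (a), I would split the coordinates according to whether $a_j=a$: write $S\overset{d}{=}aW+T$, where $W\sim\chi^2_m$ (collecting the $m$ indices with $a_j=a$) is independent of $T:=\sum_{j:a_j<a}a_j Z_j^2\ge 0$, and observe that $K:=\E[e^{T/(2a)}]=\prod_{j:a_j\neq a}(1-a_j/a)^{-1/2}<\infty$ precisely because these $a_j$ lie strictly below $a$. Starting from the classical tail $\P(\chi^2_m>y)\sim\Gamma(m/2)^{-1}(y/2)^{m/2-1}e^{-y/2}$, hence $\P(aW>x)\sim\Gamma(m/2)^{-1}(2a)^{1-m/2}x^{m/2-1}e^{-x/(2a)}$, conditioning on $T$ and passing to the limit (a routine dominated-convergence argument, splitting at $\{T\le(1-\eta)t\}$ and using $\E[e^{T/(2a)}]<\infty$ to control $\P(T>(1-\eta)t)$) gives
\[
	\P(S>t)\ \sim\ \frac{(2a)^{1-m/2}\,K}{\Gamma(m/2)}\,t^{\frac{m}{2}-1}\,e^{-t/(2a)}\qquad\text{as }t\to\infty.
\]
The same constant can equivalently be read off from the order-$m/2$ singularity of $\E[e^{\lambda S}]=\prod_{j=1}^k(1-2\lambda a_j)^{-1/2}$ at $\lambda=1/(2a)$ via a Tauberian theorem; I would use whichever form is cleanest to justify rigorously.

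For (b), a tail consisting of a regularly varying prefactor times $e^{-t/(2a)}$ satisfies the von Mises condition $\P(S>t+2ax)/\P(S>t)\to e^{-x}$, so $S$ lies in the Gumbel max-domain of attraction with constant scaling $2a$; hence $\tfrac{1}{2a}(M_N-b_N)\overset{N \to \infty}{\to}\Lambda$ in distribution for any $b_N$ satisfying $N\,\P(S>b_N)\to 1$. It then remains to solve for $b_N$: taking logarithms in $N\,\P(S>b_N)=1$ and bootstrapping from the leading behavior $b_N\sim 2a\log N$ (so that $\log b_N=\log\log N+\log(2a)+\oo(1)$) yields, after the two $\log(2a)$ contributions cancel,
\[
	\frac{b_N}{2a}\ =\ \log N+\Big(\frac{m}{2}-1\Big)\log\log N+\log\!\Big(\frac{K}{\Gamma(m/2)}\Big)+\oo(1).
\]
Since $-\log(K/\Gamma(m/2))=\log\Gamma(m/2)+\tfrac{1}{2}\sum_{j:a_j\neq a}\log(1-a_j/a)=c_m(a_1,\ldots,a_k)$, substituting this $b_N$ reproduces exactly the centering in the statement.

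The step I expect to be the main obstacle is (a): the centering terms $\big(1-\tfrac{m}{2}\big)\log\log N$ and $c_m$ are dictated entirely by the precise exponent $t^{m/2-1}$ and the precise constant $(2a)^{1-m/2}K/\Gamma(m/2)$ in the tail of $S$, so these must be pinned down exactly. Once they are, the $\oo(1)$ error in the tail asymptotics is harmless, as it enters only through $\log(1+\oo(1))=\oo(1)$ in the equation defining $b_N$. The remaining ingredients — the reduction to i.i.d.\ variables, the von Mises verification, and the asymptotic inversion of $N\,\P(S>b_N)=1$ — are entirely standard.
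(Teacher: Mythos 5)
Your proposal is correct and lands on the right constants, but it derives the crucial tail asymptotics by a genuinely different route from the paper. The paper passes to $k$-dimensional hyperspherical coordinates and runs a Laplace-method analysis of $\P\left(\sum_j a_j y_j^2 \geq t\right)$ around $\vec{\varphi}^\ast = (\pi/2,\ldots,\pi/2)$, integrating out the $m-1$ free angles when the top eigenvalue has multiplicity $m$; it then avoids solving $N\P(S>b_N)\to 1$ by hand, invoking instead the tail-equivalence result \cite[Proposition 3.3.28]{EmbKluMik1997} against $F$ the law of $a\chi^2_m$, whose Gumbel norming constants are classical. Your decomposition $S \overset{d}{=} aW + T$ with $W \sim \chi^2_m$ independent of $T = \sum_{j : a_j < a} a_j Z_j^2$, followed by conditioning and dominated convergence, yields the same asymptotics $\P(S>t) \sim (2a)^{1-m/2} K\, t^{m/2-1} e^{-t/(2a)}/\Gamma(m/2)$ more elementarily (no multidimensional Laplace analysis), and your direct inversion for $b_N$, including the cancellation of the $\log(2a)$ terms, is a legitimate substitute for the tail-equivalence step; the constants agree, since the paper's prefactor $2^{1-m/2} a^{1+k/2-m} \big/ \sqrt{\prod_{j=1}^{k-m}(a-a_j)}$ equals your $(2a)^{1-m/2}K$, and $\log\left(\Gamma(m/2)/K\right) = c_m(a_1,\ldots,a_k)$ as you note. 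What the paper's coordinate setup buys is reusability: the same function $f(\vec{\varphi})$ and the same Laplace expansion are recycled for the mixed-sign and all-negative cases (Lemmas \ref{lem:rankk_somepossomeneg} and \ref{lem:rankk_allneg}) and for the volume identity in Lemma \ref{lem:calculusmiracle}, whereas your convolution argument would need separate adaptation there — though none of that is required for the present lemma.

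One small repair is needed in your step (a): bounding the bad event with only $\E[e^{T/(2a)}]<\infty$ gives $\P(T>(1-\eta)t) \leq K e^{-(1-\eta)t/(2a)}$, which is \emph{not} $\oo\big(t^{m/2-1}e^{-t/(2a)}\big)$, so as written the remainder is not negligible. The fix is immediate: every $a_j$ entering $T$ is strictly smaller than $a$, so $\E[e^{\lambda T}] = \prod_{j:a_j<a}(1-2\lambda a_j)^{-1/2} < \infty$ for some $\lambda > 1/(2a)$; choosing $\eta$ small enough that $\lambda(1-\eta) > 1/(2a)$ makes $\P(T>(1-\eta)t)$ exponentially smaller than the main term. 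With that adjustment (and the domination $\max(1,\eta^{1-m/2})e^{T/(2a)}$ on the good event to cover $m=1$), your argument goes through as written.
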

\begin{proof}
Although the cases of different multiplicity $m$ (the number of $a_i$'s equaling the largest value, $a$) could be treated simultaneously, for pedagogical reasons we treat first the case $m = k$, then the case $m = 1$, then the case of intermediate $m$. 

The case $m = k$, i.e. $a_1 = \cdots = a_k = a$, is easy; the result is then equivalent to the maximal fluctuations of $\chi^2_k$ variables, i.e., to the statement $\frac{1}{2}\max_{i=1}^N \{X_i\} - \log N + (1-k/2) \log \log N + \log \Gamma(k/2)\to \Lambda$ where the $X_i$'s are i.i.d. $\chi^2_k$'s, and this is a very classical result (see, e.g., \cite[Section 3.4]{EmbKluMik1997}). 

We now assume $m = 1$, i.e., there is a unique largest $a_i$, which we assume without loss of generality to be the last one $a_k$. Momentarily writing $y_j = y_{j1}$, we first need large-$t$ asymptotics for $\P(\sum_{i=1}^k a_iy_i^2 \geq t)$. Write $E_t = \{(z_1,\ldots,z_k) \in \R^k : \sum_j a_j z_j^2 \leq t\}$ for the relevant ellipsoid and its interior; then this probability is of course the integral of Gaussian measure over $E_t^c$, which we estimate by working $k$-dimensional hyperspherical coordinates $(r, \varphi_1, \ldots, \varphi_{k-1})$, with $\varphi_j = \arccot\frac{z_j}{\sqrt{z_k^2+z_{k-1}^2+\cdots+z_{j+1}^2}}$ for $j = 1, \ldots, k-2$ and $\varphi_{k-1} = 2\arccot\frac{z_{k-1}+\sqrt{z_k^2+z_{k-1}^2}}{z_k}$. In these coordinates, we have
\begin{equation}
\label{eqn:sphericalcoords}
\begin{split}
	\P\left(\sum_{i=1}^k a_iy_i^2 \geq t\right) &= \frac{1}{(2\pi)^{k/2}} \int_{S_\varphi} \left( \int_{r_t(\vec{\varphi})}^\infty e^{-\frac{r^2}{2}} r^{k-1} \diff r \right) \sin^{k-2}(\varphi_1) \sin^{k-3}(\varphi_2) \cdots \sin(\varphi_{k-2}) \diff \vec{\varphi} \\
	&=  \frac{t^{\frac{k}{2}}}{(2\pi)^{k/2}} \int_{S_\varphi} \left( \int_1^\infty e^{-t\frac{u^2}{2f(\vec{\varphi})}} u^{k-1} \diff u \right) f(\vec{\varphi})^{-\frac{k}{2}} \sin^{k-2}(\varphi_1) \sin^{k-3}(\varphi_2) \cdots \sin(\varphi_{k-2}) \diff \vec{\varphi} \\
	&= \frac{2t^{\frac{k}{2}}}{(2\pi)^{k/2}} \int_{[0,\pi]^{k-1}} \left( \int_1^\infty e^{-t\frac{u^2}{2f(\vec{\varphi})}} u^{k-1} \diff u \right) f(\vec{\varphi})^{-\frac{k}{2}} \sin^{k-2}(\varphi_1) \sin^{k-3}(\varphi_2) \cdots \sin(\varphi_{k-2}) \diff \vec{\varphi}
\end{split}
\end{equation}
where $\diff \vec{\varphi} = \prod_{j=1}^{k-1} \diff \varphi_j$, where $S_\varphi = \{(\varphi_1, \ldots, \varphi_{k-1}) : 0 \leq \varphi_j \leq \pi \text{ for } j=1, \ldots, k-2, \text{ and } 0 \leq \varphi_{k-1} \leq 2\pi\}$, and where 
\begin{align*}
	r_t(\vec{\varphi}) =& \, r_t(\varphi_1, \ldots, \varphi_{k-1}) = \sqrt{\frac{t}{f(\vec{\varphi})}}, \\
	f(\vec{\varphi}) =& \, a_1\cos^2(\varphi_1) + a_2\sin^2(\varphi_1)\cos^2(\varphi_2) + a_3\sin^2(\varphi_1)\sin^2(\varphi_2)\cos^2(\varphi_3) \\
	&+ \cdots + a_{k-1}\sin^2(\varphi_1)\cdots \sin^2(\varphi_{k-2})\cos^2(\varphi_{k-1}) + a_k\sin^2(\varphi_1) \cdots \sin^2(\varphi_{k-2})\sin^2(\varphi_{k-1}).
\end{align*}
(The last equality in \eqref{eqn:sphericalcoords} holds since $f(\vec{\varphi})$ is $\pi$-periodic in $\varphi_{k-1}$, which does not appear anywhere else, i.e., $\int_{[0,\pi]^{k-2} \times [0,\pi]} = \int_{[0,\pi]^{k-2} \times [\pi,2\pi]}$.)

The right-hand side of \eqref{eqn:sphericalcoords} is now amenable to the Laplace method. The integral in $u$ is slightly unusual, since the exponential is maximized at the endpoint $u = 1$; this variant on the Laplace method gives, for any $\vec{\varphi}$ with $f(\vec{\varphi}) \neq 0$ (i.e., Lebesgue-a.e. $\vec{\varphi}$),
\[
	\int_1^\infty e^{-t\frac{u^2}{2f(\vec{\varphi})}} u^{k-1} \diff u \sim f(\vec{\varphi}) t^{-1} e^{-\frac{t}{2f(\vec{\varphi})}},
\]
which leaves us with an integral of the form
\[
	\int_{[0,\pi]^{k-1}} e^{tg(\vec{\varphi})} h(\vec{\varphi}) \diff \vec{\varphi},
\]
with $g(\vec{\varphi}) = -\frac{1}{2f(\vec{\varphi})}$ and $h(\vec{\varphi}) = f(\vec{\varphi})^{1-\frac{k}{2}} \sin^{k-2}(\varphi_1) \sin^{k-3}(\varphi_2) \cdots \sin(\varphi_{k-2})$. Because of how we ordered the $a_i$'s, the function $g$ takes a unique maximum on $[0,\pi]^{k-1}$ at $\vec{\varphi}^\ast = (\pi/2,\ldots,\pi/2)$, where it takes the value $-\frac{1}{2a_k}$, and where it has Hessian $\left. \nabla^2 g(\vec{\varphi})\right|_{\vec{\varphi} = \vec{\varphi}^\ast} = \frac{1}{a_k^2} \diag(a_1-a_k, a_2-a_k, \ldots, a_{k-1}-a_k)$. Since $h(\vec{\varphi}^\ast) = f(\vec{\varphi}^\ast)^{1-\frac{k}{2}} = a_k^{1-\frac{k}{2}}$, we obtain
\begin{align*}
	\P\left(\sum_{i=1}^k a_iy_i^2 \geq t\right) &\sim \frac{2t^{\frac{k}{2}}}{(2\pi)^{k/2}} \frac{1}{t} \left(\frac{2\pi}{t}\right)^{\frac{k-1}{2}} a_k^{1-\frac{k}{2}} \frac{1}{\sqrt{\left(\frac{1}{a_k^2}\right)^{k-1} \prod_{j=1}^{k-1} (a_k-a_j)}} e^{-\frac{t}{2a_k}} \\
	&= \sqrt{\frac{2}{\pi}} \frac{a_k^{k/2}}{\sqrt{\prod_{j=1}^{k-1}(a_k-a_j)}} t^{-\frac{1}{2}} e^{-\frac{t}{2a_k}}.
\end{align*}
Now we will use the following classical result in extreme value theory (see, e.g., \cite[Proposition 3.3.28]{EmbKluMik1997}): Suppose that $F$ and $G$ are distribution functions with infinite right endpoint, i.e. $\max(F(x),G(x)) < 1$ for every $x$, such that for some sequences $(c_N)_{N=1}^\infty$, $(d_N)_{N=1}^\infty$ with $c_N > 0$ we have 
\begin{equation}
\label{eqn:gumbeltailF}
	\lim_{N \to \infty} F^N(c_Nx+d_N) = \Lambda(x).
\end{equation}
Then
\begin{equation}
\label{eqn:gumbeltailG}
	\lim_{N \to \infty} G^N(c_Nx+d_N) = \Lambda(x+b)
\end{equation}
if and only if
\[
	\lim_{x \to +\infty} \frac{1-F(x)}{1-G(x)} = e^b.
\]
We apply this with $G$ the distribution function of $\sum_{i=1}^k a_iy_i^2$ and $F$ the distribution function of $a_k$ times a $\chi^2_1$ variable, i.e., 
\[
	F(x) = \int_{-\sqrt{\frac{x}{a_k}}}^{\sqrt{\frac{x}{a_k}}} \frac{e^{-\frac{y^2}{2}}}{\sqrt{2\pi}} \diff y \quad \text{which has} \quad 1-F(x) \sim \sqrt{\frac{2a_k}{\pi}} x^{-1/2} e^{-\frac{x}{2a_k}}.
\]
Then
\[
	\lim_{x \to +\infty} \frac{1-F(x)}{1-G(x)} = \sqrt{\prod_{j=1}^{k-1} \left(1-\frac{a_j}{a_k}\right)}
\]
and since \eqref{eqn:gumbeltailF} holds with $c_N = 2a_k$ and $d_N = 2a_k(\log N - \frac{1}{2}\log\log N - \frac{1}{2}\log \pi)$ (see, e.g., \cite[Section 3.4]{EmbKluMik1997}), we obtain
\[
	\P\left(\max_{j=1}^N \left\{\sum_{i=1}^k a_i y_{ij}^2\right\} \leq c_N\left(x-\frac{1}{2}\log\left(\prod_{j=1}^{k-1}\left(1-\frac{a_j}{a_k}\right)\right)\right)+d_N\right) \to \Lambda(x)
\]
which is what we want.

Finally we handle the case of $1 < m < k$, and without loss of generality we assume the $a_i$'s are ordered as $a_1 \leq \cdots \leq a_{k-m} < a_{k-m+1} = \cdots = a_k = a$. Notice that in this case
\[
	f(\vec{\varphi}) = a_1\cos^2(\varphi_1) + a_2 \sin^2(\varphi_1)\cos^2(\varphi_2) + \cdots + a_{k-m}\sin^2(\varphi_1)\cdots\cos^2(\varphi_{k-m}) + a\sin^2(\varphi_1)\sin^2(\varphi_2) \cdots \sin^2(\varphi_{k-m})
\]
i.e., $f(\vec{\varphi})$ actually does not depend on $\varphi_{k-m+1}, \ldots, \varphi_{k-1}$. Thus we can integrate these out first in \eqref{eqn:sphericalcoords}, and we pick up a factor
\[
	\int_{[0,\pi]^{m-1}} \sin^{m-2}(\varphi_{k-m+1}) \cdots \sin(\varphi_{k-2}) \prod_{j=k-m+1}^{k-1} \diff \varphi_j = \prod_{j=0}^{m-2} \left[ \int_0^\pi \sin^j(x) \diff x \right] = \prod_{j=0}^{m-2} \frac{\sqrt{\pi}\Gamma\left(\frac{1+j}{2}\right)}{\Gamma \left(1+\frac{j}{2}\right)} = \pi^{\frac{m-1}{2}} \frac{\Gamma\left(\frac{1}{2}\right)}{\Gamma\left(\frac{m}{2}\right)}
\]
so that, writing $\underline{\varphi} = (\varphi_1, \ldots, \varphi_{k-m})$ and $\diff \underline{\varphi} = \prod_{j=1}^{k-m} \diff \varphi_j$, and then using the Laplace method on the remaining $k-m$ variables just as above, we find
\begin{align*}
	&\P\left(\sum_{i=1}^k a_iy_i^2 \geq t\right) \\
	&= \frac{2t^{\frac{k}{2}}}{(2\pi)^{k/2}} \frac{\pi^{\frac{m}{2}}}{\Gamma\left(\frac{m}{2}\right)} \int_{[0,\pi]^{k-m}} \left( \int_1^\infty e^{-t\frac{u^2}{2f(\underline{\varphi})}} u^{k-1} \diff u \right) f(\underline{\varphi})^{-\frac{k}{2}} \sin^{k-2}(\varphi_1) \sin^{k-3}(\varphi_2) \cdots \sin^{m-1}(\varphi_{k-m}) \diff \underline{\varphi} \\
	&\sim \frac{2t^{\frac{k}{2}}}{t(2\pi)^{k/2}} \frac{\pi^{\frac{m}{2}}}{\Gamma\left(\frac{m}{2}\right)} \left(\frac{2\pi}{t}\right)^{\frac{k-m}{2}} a^{1-\frac{k}{2}} \frac{1}{\sqrt{\left(\frac{1}{a^2}\right)^{k-m} \prod_{j=1}^{k-m}(a-a_j)}} e^{-\frac{t}{2a}} = \frac{2^{1-\frac{m}{2}}}{\Gamma\left(\frac{m}{2}\right)} \frac{a^{1+\frac{k}{2}-m}}{\sqrt{\prod_{j=1}^{k-m} (a-a_j)}} t^{\frac{m}{2}-1} e^{-\frac{t}{2a}}.
\end{align*}
We use the same tail-equivalence result from extreme value theory, still with $G$ the distribution function of $\sum_{i=1}^k a_i y_i^2$, but now with $F$ the distribution function of $a$ times a $\chi^2_m$ variable, i.e., 
\[
	F(x) = \frac{\gamma\left(\frac{m}{2},\frac{x}{2a}\right)}{\Gamma\left(\frac{m}{2}\right)} \quad \text{which has} \quad 1-F(x) \sim \frac{(2a)^{1-\frac{m}{2}}}{\Gamma\left(\frac{m}{2}\right)} t^{\frac{m}{2}-1} e^{-\frac{t}{2a}}
\]
which is known to satisfy \eqref{eqn:gumbeltailF} with $c_N = 2a$ and $d_N = 2a(\log N + (\frac{m}{2}-1)\log\log N - \log \Gamma(\frac{m}{2}))$ (see, e.g., \cite[Section 3.4]{EmbKluMik1997}). We obtain
\[
	\P\left( \max_{j=1}^N \left\{ \sum_{i=1}^k a_i y_{ij}^2\right\} \leq c_N\left(x-\frac{1}{2}\log \left( \prod_{j=1}^{k-m} \left(1-\frac{a_j}{a}\right) \right) \right) + d_N \right) \to \Lambda(x),
\]
which is what we wanted.
\end{proof}

\begin{lem}
\label{lem:rankk_somepossomeneg}
Fix $k \in \N$ and nonzero real numbers $a_1, \ldots, a_k$, at least one of which is positive, and write $A_N = \diag(a_1,\ldots,a_k,0,\ldots,0)$. With the same constants defined in \eqref{eqn:defmult}, \eqref{eqn:defc}, and \eqref{eqn:defcbar}, and with $\Lambda$ a Gumbel-distributed random variable, we have
\[
	\frac{1}{2a} \max_{i=1}^N \ip{\mathbf{y}_i, A_N \mathbf{y}_i} - \log N + \left(1-\frac{m}{2}\right) \log \log N + c_m(a_1, \ldots, a_k) \overset{N \to \infty}{\to} \Lambda \quad \text{in distribution}
\]
and
\[
	\frac{1}{2a} \max_{i=1}^N \abs{\ip{\mathbf{y}_i, A_N \mathbf{y}_i}} - \log N + \left(1-\frac{\max(m_+,m_-)}{2}\right) \log\log N + c^\ast_{m_+,m_-}(a_1, \ldots, a_k) \overset{N \to \infty}{\to} \Lambda \quad \text{in distribution.}
\]
\end{lem}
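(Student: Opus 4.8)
The idea is to reduce both statements, exactly as in the proof of Lemma~\ref{lem:rankk_allpos}, to the large-$t$ asymptotics of the single random variable $Q \defeq \sum_{i=1}^k a_i y_i^2$ with $y_1,\dots,y_k$ i.i.d.\ standard Gaussians, since $\max_{i=1}^N\ip{\mathbf y_i,A_N\mathbf y_i}$ and $\max_{i=1}^N\abs{\ip{\mathbf y_i,A_N\mathbf y_i}}$ are maxima of $N$ i.i.d.\ copies of $Q$ and $\abs Q$ respectively, and then to feed these asymptotics into the tail-equivalence result \cite[Proposition 3.3.28]{EmbKluMik1997} relating \eqref{eqn:gumbeltailF} and \eqref{eqn:gumbeltailG}. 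For the first (no-absolute-value) statement the point is that the \emph{upper} tail of $Q$ only sees the positive eigenvalues. I would rerun the hyperspherical-coordinate computation \eqref{eqn:sphericalcoords}: the function $f(\vec\varphi)$ there is a convex combination of $a_1,\dots,a_k$, so a ray in direction $\vec\varphi$ meets $\{Q\ge t\}$ for $t>0$ exactly when $f(\vec\varphi)>0$, and the only change is that the angular integral now runs over $\{\vec\varphi:f(\vec\varphi)>0\}$ instead of all of $[0,\pi]^{k-1}$. The maximum of $-\tfrac1{2f(\vec\varphi)}$ over this set is attained, in its interior, at the coordinate directions of the largest value $a=\max_i a_i$, with the same Hessian, so the Laplace analysis (including the $\Gamma(m/2)$ factor from integrating out the angles inside the top eigenspace) goes through verbatim and gives
\[
	\P(Q\ge t)\;\sim\;\frac{2^{1-m/2}}{\Gamma(m/2)}\,\frac{a^{1+k/2-m}}{\sqrt{\prod_{j:\,a_j\neq a}(a-a_j)}}\;t^{m/2-1}e^{-t/2a}.
\]
(Equivalently this follows from the rightmost singularity of $s\mapsto\E[e^{sQ}]=\prod_i(1-2sa_i)^{-1/2}$, at $s=1/2a$ of order $m/2$, via a Tauberian theorem, to the extent it reproduces the constant above.) Comparing with $F=$ law of $a\cdot\chi^2_m$ exactly as in Lemma~\ref{lem:rankk_allpos} gives $\lim_{x\to\infty}\frac{1-F(x)}{1-\P(Q\le x)}=\sqrt{\prod_{a_j\neq a}(1-a_j/a)}$, and the tail-equivalence result then produces precisely the centering with $c_m$ from \eqref{eqn:defc}; so the first statement follows word for word as in Lemma~\ref{lem:rankk_allpos}.

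\textbf{The absolute-value statement.} Here I would write $\abs Q=\max(Q,-Q)$, so that for $t>0$
\[
	\P(\abs Q\ge t)=\P(Q\ge t)+\P(-Q\ge t),
\]
and apply the tail estimate above both to $Q$ (top eigenvalue $a=\max_i a_i$, multiplicity $m_+$ when $a=a_\ast$) and to $-Q$ (top eigenvalue $a'\defeq\max_{a_i<0}(-a_i)$, multiplicity $m_-$ when $a'=a_\ast$). Since $a_\ast=\max(a,a')$, the exponential rate of $\P(\abs Q\ge t)$ is $e^{-t/2a_\ast}$ in all cases, the polynomial order is $\tfrac12\max(m_+,m_-)-1$ (only the $Q$-tail contributes if $a>a'$, only the $-Q$-tail if $a<a'$, the larger of $m_+,m_-$ if $a=a'$), and when $a=a'$ with $m_+=m_-$ the two leading constants \emph{add}. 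This is exactly the trichotomy in the definition \eqref{eqn:defcbar} of $c^\ast_{m_+,m_-}$: comparing $G=$ law of $\abs Q$ with $F=$ law of $a_\ast\cdot\chi^2_{\max(m_+,m_-)}$ gives $\lim_{x\to\infty}\frac{1-F(x)}{1-G(x)}$ equal to $\sqrt{\prod_{a_j\neq a_\ast}(1-a_j/a_\ast)}$, resp.\ $\sqrt{\prod_{a_j\neq-a_\ast}(1+a_j/a_\ast)}$, resp.\ the reciprocal-sum combination in the third branch, and feeding this into the tail-equivalence result for the i.i.d.\ sequence $\big(\abs{\ip{\mathbf y_i,A_N\mathbf y_i}}\big)_{i=1}^N$ produces the claimed centering with $c^\ast_{m_+,m_-}$ and the $\big(1-\tfrac{\max(m_+,m_-)}{2}\big)\log\log N$ correction. (When all $a_i>0$ one has $\abs Q=Q$, $a_\ast=a$, $m_+=m$, $m_-=0$, and this reduces to the first statement, as it should.)

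\textbf{Main obstacle.} The routine work is the Laplace/Tauberian bookkeeping (in spirit identical to Lemma~\ref{lem:rankk_allpos}) and the algebra turning the three tail constants into the three expressions in \eqref{eqn:defcbar}. The one genuinely delicate case is $a=a'$ with $m_+=m_-$ in the absolute-value statement: there the two tails have exactly the same order and must be summed, and one must check that the reference sequences $c_N=2a_\ast$ and $d_N=2a_\ast\big(\log N+(\tfrac{\max(m_+,m_-)}{2}-1)\log\log N-\log\Gamma(\tfrac{\max(m_+,m_-)}{2})\big)$ attached to $a_\ast\cdot\chi^2_{\max(m_+,m_-)}$ are the correct ones, so that the extra multiplicative factor passes cleanly into an additive Gumbel shift via \eqref{eqn:gumbeltailG}. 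I expect that case to require the most care, although it uses no idea beyond those already present in Lemma~\ref{lem:rankk_allpos}.
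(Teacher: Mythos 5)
Your proposal is correct and takes essentially the same route as the paper: rerun the hyperspherical Laplace computation of Lemma \ref{lem:rankk_allpos} with the angular integral restricted to $\{f(\vec{\varphi})>0\}$, transfer the resulting upper-tail asymptotics to the maximum via the tail-equivalence criterion with a scaled $\chi^2$ reference law, and for the absolute value use $\P\left(\abs{\sum_i a_i y_i^2}\ge t\right)=\P\left(\sum_i a_i y_i^2\ge t\right)+\P\left(\sum_i (-a_i) y_i^2\ge t\right)$, with the dominant term determined by $\max(m_+,m_-)$ and the constants adding when $m_+=m_-$, exactly as the paper does. The only cosmetic differences are that the paper handles the case $m_++m_-=1$ by running Laplace directly on $\abs{f(\vec{\varphi})}$ instead of the two-tail split, and that your normalization of the absolute-value statement by $2a_\ast$ matches \eqref{eqn:main_apos_abs} and the $e^{-t/2a_\ast}$ tail rate that both arguments produce.
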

\begin{proof}
As before, we split the proof into cases for pedagogical reasons. In the first half we treat the case without absolute values (when we assume without loss of generality the ordering $a_1 \leq \cdots \leq a_k$), treating first the case $m = 1$ and then the case $m > 1$. In the second half we treat the case with absolute values, when we assume without loss of generality (up to flipping the signs of all the $a_i$'s) the ordering
\[
	\underbrace{a_k = a_{k-1} = \cdots = a_{k-m_+ + 1}}_{m_+ \text{ many}} = a, \quad \underbrace{a_{k-m_+} = \cdots = a_{k-m+1}}_{m_- \text{ many}} = -a, \quad a > \max(0,\max(\abs{a_1}, \ldots, \abs{a_{k-m}})).
\]
Here we treat first the case when $m_\ast \defeq m_+ + m_- = 1$, then the case $m_\ast > 1$. 

The case without absolute values when $m = 1$ is very similar to the case when all $a_i$'s are positive; the difference here is that the shape $\{(z_1, \ldots, z_k) \in \R^k : \sum_j a_j z_j^2 \leq t\}$ is no longer an ellipsoid, but rather a type of conic section. Its complement can be parametrized in our $k$-dimensional hyperspherical coordinates as
\[
	\left\{(r, \varphi_1, \ldots, \varphi_{k-1}) : f(\vec{\varphi}) > 0 \text{ and }  r > \sqrt{t/f(\vec{\varphi})} \right\}.
\]
(This is also true in the case when all $a_i$'s are positive, but there the condition $\{f(\vec{\varphi}) > 0\}$ is satisfied for every $\vec{\varphi}$, which is no longer true when some $a_i$'s are negative.) Then 
\[
	\P\left(\sum_{i=1}^k a_i y_i^2 \geq t\right) = \frac{2t^{\frac{k}{2}}}{(2\pi)^{k/2}} \int_{G_{\vec{\varphi}}} \left( \int_1^\infty e^{-t\frac{u^2}{2f(\vec{\varphi})}} u^{k-1} \diff u \right) f(\vec{\varphi})^{-\frac{k}{2}} \sin^{k-2}(\varphi_1) \sin^{k-3}(\varphi_2) \cdots \sin(\varphi_{k-2}) \diff \vec{\varphi}
\]
using the crucial set $G_{\vec{\varphi}} = \{\varphi_1, \ldots, \varphi_{k-1} \in [0,\pi]^{k-1} : f(\vec{\varphi}) > 0\}$. But, ordering $a_1 \leq \ldots \leq a_k$ without loss of generality as above, notice that this set is open and contains $\vec{\varphi}^\ast = (\pi/2,\ldots,\pi/2)$; hence the Laplace method, expanding in a small neighborhood about this point, works in the same way as before.

Next we consider the case without absolute values but with $m > 1$. We still have the crucial property that $f(\vec{\varphi})$ does not depend on $\varphi_{k-m+1}, \ldots, \varphi_{k-1}$, so we can write $f(\vec{\varphi}) = f(\underline{\varphi})$, naturally define $G_{\underline{\varphi}}$, and then note that $G_{\vec{\varphi}}$ has the product structure $G_{\vec{\varphi}} = G_{\underline{\varphi}} \times [0,\pi]^{m-1}$; hence we can integrate out $\varphi_{k-m+1}, \ldots, \varphi_{k-1}$ cleanly and then proceed in the same way as before.

Next we consider the case with absolute values and $m_\ast = 1$. Up to flipping the sign of every $a_i$, we can assume that $m_+ = 1$ and $m_- = 0$. Here the relevant sets are
\[
	\left\{(z_1, \ldots, z_k) \in \R^k : \abs{\sum_{i=1}^k a_i z_i^2} > t\right\} = \{(r,\varphi_1, \ldots, \varphi_{k-1}) : \abs{f(\vec{\varphi})} > 0 \text{ and } r > \sqrt{t/\abs{f(\vec{\varphi})}}\}.
\]
so that
\[
	\P\left(\abs{\sum_{i=1}^k a_iy_i^2} \geq t\right) = \frac{2t^{\frac{k}{2}}}{(2\pi)^{k/2}} \int_{\abs{G_{\vec{\varphi}}}} \left( \int_1^\infty e^{-t\frac{u^2}{2\abs{f(\vec{\varphi})}}} u^{k-1} \diff u \right) \abs{f(\vec{\varphi})}^{-\frac{k}{2}} \sin^{k-2}(\varphi_1) \sin^{k-3}(\varphi_2) \cdots \sin(\varphi_{k-2}) \diff \vec{\varphi}
\]
where $\abs{G_{\vec{\varphi}}} = \{\varphi_1, \ldots, \varphi_{k-1} \in [0,\pi]^{k-1} : \abs{f(\vec{\varphi})} > 0\}$, which still contains the crucial point $\vec{\varphi}^\ast = (\pi/2,\ldots,\pi/2)$ at which the exponential argument $\frac{-1}{2\abs{f(\vec{\varphi})}}$ is uniquely maximized (notice this is still the maximum since we assumed $a_k$ also had the largest absolute value). Furthermore, the function $f$ is positive in a neighborhood of $\vec{\varphi}^\ast$; hence for the purpose of the Laplace method we can drop the absolute values, and obtain the same asymptotics as before.

Finally we consider the case of absolute values and $m_\ast > 1$, i.e. when there are multiple largest $\abs{a_i}$'s, with maximal value labelled $a_\ast$, of which there are $m_+$ many positive ones and $m_-$ many negative ones. We handle this by writing
\[
	\P\left(\abs{\sum_{i=1}^k a_iy_i^2} \geq t\right) = \P\left(\sum_{i=1}^k a_iy_i^2 \geq t\right) + \P\left(\sum_{i=1}^k (-a_i)y_i^2 \geq t\right).
\]
Earlier arguments show that the terms on the right-hand side behave asymptotically, respectively, like $c_+t^{\frac{m_+}{2}-1}e^{-\frac{t}{2a_\ast}}$ and $c_-t^{\frac{m_-}{2}-1}e^{-\frac{t}{2a_\ast}}$ for some constants $c_+$ and $c_-$. Thus the asymptotics of the sum are dominated by term corresponding to $\max(m_+,m_-)$. When $m_+ = m_-$, the terms have equal size and the constants add.
\end{proof}

\begin{lem}
\label{lem:rankk_allneg}
Suppose $a_1, \ldots, a_k < 0$ are fixed, write $A_N = \diag(a_1, \ldots, a_k, 0,\ldots,0)$, and write $\Psi_{k/2}$ for a $\frac{k}{2}$-Weilbull-distributed random variable (i.e. with distribution function $\Psi_{k/2}(x) = \min(\exp(-(-x)^{k/2}),1)$). Then with 
\[
	\gamma_k = \frac{1}{2} \left(\frac{2}{k\Gamma(k/2)}\right)^{\frac{2}{k}}
\]
we have
\[
	\frac{\gamma_k N^{2/k}}{\left(\prod_{j=1}^k \abs{a_j}\right)^{\frac{1}{k}}} \max_{i=1}^N \ip{\mathbf{y}_i, A_N \mathbf{y}_i}  \overset{N \to \infty}{\to} \Psi_{k/2} \quad \text{in distribution.}
\] 
\end{lem}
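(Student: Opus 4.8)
\emph{Proof proposal.} The plan is to reduce the statement to a classical extreme-value computation for the \emph{minimum} of i.i.d.\ weighted sums of squared Gaussians. Write $b_j \defeq -a_j > 0$, so that $\ip{\mathbf{y}_i, A_N \mathbf{y}_i} = -\sum_{j=1}^k b_j y_{ij}^2 \eqdef -S_i$, where the $(S_i)_{i=1}^N$ are i.i.d.\ nonnegative random variables, each distributed as a fixed positive-definite quadratic form in $k$ independent standard Gaussians; in particular $\max_{i=1}^N \ip{\mathbf{y}_i, A_N \mathbf{y}_i} = -\min_{i=1}^N S_i$ and $\prod_{j=1}^k \abs{a_j} = \prod_{j=1}^k b_j$. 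Also $\Psi_{k/2} \overset{d}{=} -W$, where $W$ is a standard Weibull variable of shape $\tfrac{k}{2}$, i.e.\ $\P(W > u) = e^{-u^{k/2}}$ for $u \geq 0$ (indeed $\P(-W \leq x) = \P(W \geq -x) = e^{-(-x)^{k/2}}$ for $x \leq 0$). Hence it suffices to prove
\[
	\frac{\gamma_k N^{2/k}}{\big(\prod_{j=1}^k b_j\big)^{1/k}} \min_{i=1}^N S_i \overset{N\to\infty}{\to} W \qquad \text{in distribution.}
\]

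First I would pin down the small-ball asymptotics of $F(x) \defeq \P(S_1 \leq x)$ as $x \downarrow 0$. The event $\{S_1 \leq x\}$ is the ellipsoid $\mc{E}_x \defeq \{z \in \R^k : \sum_{j=1}^k b_j z_j^2 \leq x\}$, on which $\norm{z}^2 \leq x \sum_{j=1}^k b_j^{-1}$, so $e^{-\frac{x}{2}\sum_j b_j^{-1}} \leq e^{-\norm{z}^2/2} \leq 1$ pointwise on $\mc{E}_x$. Integrating the standard Gaussian density over $\mc{E}_x$ and using the exact volume $\Vol(\mc{E}_x) = \frac{\pi^{k/2}}{\Gamma(k/2+1)}\, x^{k/2} \big(\prod_{j=1}^k b_j\big)^{-1/2}$, this two-sided bound gives
\[
	F(x) = \frac{x^{k/2}}{2^{k/2}\,\Gamma\!\left(\tfrac{k}{2}+1\right)\big(\prod_{j=1}^k b_j\big)^{1/2}}\,\big(1 + \OO(x)\big), \qquad x \downarrow 0.
\]

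Then I would run the standard argument for the minimum of i.i.d.\ variables. For fixed $u > 0$, put $t_N \defeq u\,\big(\prod_{j} b_j\big)^{1/k}\big/\big(\gamma_k N^{2/k}\big) \to 0$; by independence,
\[
	\P\!\left( \frac{\gamma_k N^{2/k}}{\big(\prod_{j} b_j\big)^{1/k}} \min_{i=1}^N S_i > u \right) = \big(1 - F(t_N)\big)^N = \exp\!\big(-N F(t_N)\,(1+\oo(1))\big).
\]
The one computation to carry out is $N F(t_N) \to u^{k/2}$; this is where $\gamma_k$ is engineered: from $k\Gamma(k/2) = 2\Gamma(k/2+1)$ one gets $\gamma_k = \tfrac12\,\Gamma(k/2+1)^{-2/k}$, hence $\gamma_k^{k/2} = 2^{-k/2}\Gamma(k/2+1)^{-1}$, and substituting the displayed asymptotic for $F$ collapses all the constants, leaving $N F(t_N) = u^{k/2}(1+\oo(1))$. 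Thus $\big(1 - F(t_N)\big)^N \to e^{-u^{k/2}} = \P(W > u)$ for every $u > 0$, which (the limit law being continuous) is exactly $\frac{\gamma_k N^{2/k}}{(\prod_j b_j)^{1/k}}\min_i S_i \to W$; negating and invoking $\Psi_{k/2} \overset{d}{=} -W$ finishes the proof.

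I do not expect a genuine obstacle here: the content is a volume computation plus the classical extreme-value theory for minima (cf.\ the $\chi^2$ facts already used in Lemmas~\ref{lem:rankk_allpos} and~\ref{lem:rankk_somepossomeneg}). The only points requiring care are the matching of the $\Gamma$-function constants above (equivalently, that $\gamma_k$ is precisely $\tfrac12$ times the $(-2/k)$-th power of $\Gamma(\tfrac{k}{2}+1)$), the harmlessness of the multiplicative error $\OO(x)$ in the small-ball estimate — which holds since $t_N = \OO(N^{-2/k})$ forces $N\cdot\OO(t_N^{k/2})\cdot\OO(t_N) = \OO(t_N) = \oo(1)$ — and the bookkeeping of the two sign flips (between $\max$ over $\ip{\mathbf{y}_i,A_N\mathbf{y}_i}$ and $\min$ over $S_i$, and between $\Psi_{k/2}$ and a nonnegative Weibull).
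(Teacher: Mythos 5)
Your proof is correct, and it takes a more elementary route than the paper's. Your key computation — the small-ball asymptotic $\P\bigl(\sum_j b_j Z_j^2 \leq x\bigr) = \frac{x^{k/2}}{2^{k/2}\Gamma(k/2+1)\sqrt{\prod_j b_j}}(1+\OO(x))$, obtained by sandwiching the Gaussian density between $e^{-\frac{x}{2}\sum_j b_j^{-1}}$ and $1$ on the ellipsoid and using the exact ellipsoid volume — is sound (the bound $\norm{z}^2 \leq x\sum_j b_j^{-1}$ on the ellipsoid is valid since each term $b_jz_j^2$ is individually $\leq x$), and the constant matching with $\gamma_k^{-k/2} = 2^{k/2}\Gamma(k/2+1)$ is exactly right, as is the control of the error term via $Nt_N^{k/2} = \OO(1)$. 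The paper instead first treats the equal-coefficient case as a known $\chi^2_k$ minimum, then transfers to general $(a_1,\ldots,a_k)$ via the tail-equivalence closure property of the Weibull domain of attraction (\cite[Proposition 3.3.14]{EmbKluMik1997}), computing the tail-ratio constant by the same hyperspherical-coordinate machinery used in the Gumbel lemmas together with Lemma \ref{lem:calculusmiracle} (which is itself proved from the ellipsoid volume). So the underlying analytic content — the $(-x)^{k/2}$ small-ball behavior with the $\bigl(\prod_j\abs{a_j}\bigr)^{-1/2}$ constant — is the same, but you reach it directly and conclude with the bare-hands $(1-F(t_N))^N \to e^{-u^{k/2}}$ computation, avoiding both the hyperspherical parametrization and the citation to extreme-value tail-equivalence; what the paper's route buys is uniformity of method with Lemmas \ref{lem:rankk_allpos} and \ref{lem:rankk_somepossomeneg}, while yours is shorter and self-contained. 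Your sign bookkeeping ($\max = -\min$, $\Psi_{k/2} \overset{d}{=} -W$) and the remark that convergence at $x \geq 0$ is trivial are also fine.
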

\begin{proof}
In the case $a_1 = \cdots = a_k = a$, we note that whenever $x < 0$ we have
\[
	\P\left(\frac{\gamma_k N^{2/k}}{(-a)} \max_{j=1}^N \left\{a \sum_{i=1}^k y_{ij}^2\right\} \leq x\right) = \P\left( \sum_{i=1}^k y_{i1}^2 \geq -\frac{x}{\gamma_kN^{2/k}} \right)^N \to \exp(-(-x)^{k/2}).
\]
When the $a_i$'s are not all the same, we will use the classic result (see, e.g., \cite[Proposition 3.3.14]{EmbKluMik1997}) that the domain of attraction of $\Psi_{k/2}$ is closed under tail-equivalence; precisely, if $F$ and $G$ are distributions with right endpoints at zero and there exists a positive sequence $(c_N)_{N=1}^\infty$ with
\[
	\lim_{N \to \infty} F^N(c_Nx) = \Psi_{k/2}(x), \quad x < 0,
\]
then
\begin{equation}
\label{eqn:weibullG_k}
	\lim_{N \to \infty} G^N\left(b^{2/k}c_Nx\right) = \Psi_{k/2}(x), \quad x < 0,
\end{equation}
if and only if 
\begin{equation}
\label{eqn:weibull_ratio_k}
	\lim_{x \uparrow 0} \frac{1-F(x)}{1-G(x)} = b.
\end{equation}
We use this with $F(x) = \exp\left(-\left(- x \right)^{\frac{k}{2}}\right)$ and $c_N = \frac{1}{N^{2/k}}$, which has $\lim_{x \uparrow 0} \frac{1-F(x)}{(-x)^{k/2}} = 1$, and with $G$ the distribution function of $\sum_{i=1}^k a_i Z_i^2$, where the $Z_i$'s are independent Gaussians. That is, for $x < 0$ we have
\[
	1-G(x) = \frac{2(-x)^{\frac{k}{2}}}{(2\pi)^{k/2}} \int_{[0,\pi]^{k-1}} \left( \int_0^1 e^{x\frac{u^2}{2(-f(\vec{\varphi}))}} u^{k-1} \diff u \right) (-f(\vec{\varphi}))^{-\frac{k}{2}} \sin^{k-2}(\varphi_1) \sin^{k-3}(\varphi_2) \cdots \sin(\varphi_{k-2}) \diff \vec{\varphi}
\]
with the notation $f(\vec{\varphi})$ from before, and then dominated convergence gives \eqref{eqn:weibull_ratio_k} with
\begin{align*}
	b &= \left( \frac{2}{(2\pi)^{k/2}} \int_{[0,\pi]^{k-1}} \left( \int_0^1 u^{k-1} \diff u \right) (-f(\vec{\varphi}))^{-\frac{k}{2}} \sin^{k-2}(\varphi_1) \sin^{k-3}(\varphi_2) \cdots \sin(\varphi_{k-2}) \diff \vec{\varphi} \right)^{-1} \\
	&= \left( \frac{2}{k(2\pi)^{k/2}} \int_{[0,\pi]^{k-1}} (-f(\vec{\varphi}))^{-\frac{k}{2}} \sin^{k-2}(\varphi_1) \sin^{k-3}(\varphi_2) \cdots \sin(\varphi_{k-2}) \diff \vec{\varphi} \right)^{-1} = \gamma_k^{-k/2} \sqrt{\prod_{j=1}^k \abs{a_j}},
\end{align*}
where the last equality is Lemma \ref{lem:calculusmiracle}. Then \eqref{eqn:weibullG_k} gives the result.
\end{proof}

\begin{lem}
\label{lem:calculusmiracle}
Fix $k \in \N$ and $a_1, \ldots, a_k > 0$. If we let
\begin{align*}
	f(\vec{\varphi}) =& \, a_1\cos^2(\varphi_1) + a_2\sin^2(\varphi_1)\cos^2(\varphi_2) + a_3\sin^2(\varphi_1)\sin^2(\varphi_2)\cos^2(\varphi_3) \\
	&+ \cdots + a_{k-1}\sin^2(\varphi_1)\cdots \sin^2(\varphi_{k-2})\cos^2(\varphi_{k-1}) + a_k\sin^2(\varphi_1) \cdots \sin^2(\varphi_{k-2})\sin^2(\varphi_{k-1}),
\end{align*}
then
\[
	 \int_{[0,\pi]^{k-1}} f(\vec{\varphi})^{-\frac{k}{2}} \sin^{k-2}(\varphi_1) \sin^{k-3}(\varphi_2) \cdots \sin(\varphi_{k-2}) \diff \vec{\varphi} = \frac{\pi^{k/2}}{\Gamma(k/2)} \frac{1}{\sqrt{\prod_{j=1}^k a_j}}.
\]
\end{lem}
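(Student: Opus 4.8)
The plan is to evaluate the integral by computing the Gaussian integral $\int_{\R^k} e^{-\frac12 \sum_{j=1}^k a_j x_j^2}\, \diff x$ in two different ways and matching. On the one hand, by the standard product formula for Gaussians, this integral equals $(2\pi)^{k/2}\big(\prod_{j=1}^k a_j\big)^{-1/2}$. On the other hand, one passes to the same $k$-dimensional hyperspherical coordinates $(r,\varphi_1,\ldots,\varphi_{k-1})$ already used in the proof of Lemma \ref{lem:rankk_allpos}, in which $\sum_{j=1}^k a_j x_j^2 = r^2 f(\vec\varphi)$, with $f$ exactly the function in the statement, and the Euclidean volume element is $r^{k-1}\sin^{k-2}(\varphi_1)\cdots\sin(\varphi_{k-2})\, \diff r\, \diff\vec\varphi$ with $(\varphi_1,\ldots,\varphi_{k-2})\in[0,\pi]^{k-2}$ and $\varphi_{k-1}\in[0,2\pi]$.

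Performing the radial integral first, for each $\vec\varphi$ (note $f(\vec\varphi)>0$ for all $\vec\varphi$ since the $a_j$ are positive), the substitution $s = r\sqrt{f(\vec\varphi)}$ gives
\[
	\int_0^\infty e^{-\frac12 r^2 f(\vec\varphi)} r^{k-1}\, \diff r = f(\vec\varphi)^{-k/2}\int_0^\infty e^{-s^2/2}s^{k-1}\, \diff s = 2^{k/2-1}\Gamma(k/2)\, f(\vec\varphi)^{-k/2},
\]
where the last equality follows from the substitution $t = s^2/2$. Since $f(\vec\varphi)$ depends on $\varphi_{k-1}$ only through $\cos^2\varphi_{k-1}$ and $\sin^2\varphi_{k-1}$ — the same $\pi$-periodicity exploited in \eqref{eqn:sphericalcoords} — integrating $\varphi_{k-1}$ over $[0,2\pi]$ contributes a factor $2$ relative to integrating over $[0,\pi]$, so the second evaluation of the Gaussian integral is
\[
	2^{k/2}\,\Gamma(k/2)\int_{[0,\pi]^{k-1}} f(\vec\varphi)^{-k/2}\sin^{k-2}(\varphi_1)\cdots\sin(\varphi_{k-2})\, \diff\vec\varphi.
\]
Equating this with $(2\pi)^{k/2}\big(\prod_{j=1}^k a_j\big)^{-1/2}$ and dividing by $2^{k/2}\Gamma(k/2)$ yields the claimed identity, using $(2\pi)^{k/2}/2^{k/2} = \pi^{k/2}$.

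I expect no genuine obstacle here: the argument is a short two-ways computation. The only points requiring care are the bookkeeping of the hyperspherical Jacobian and the identity $\sum_j a_j x_j^2 = r^2 f(\vec\varphi)$ — both already established in the proof of Lemma \ref{lem:rankk_allpos} — together with the elementary evaluation $\int_0^\infty e^{-s^2/2}s^{k-1}\, \diff s = 2^{k/2-1}\Gamma(k/2)$ and the $\pi$-periodicity of $f$ in its last angular variable.
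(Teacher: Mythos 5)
Your argument is correct, and it verifies the identity by a route that is a genuine variant of the paper's. You compute the Gaussian integral $\int_{\R^k} e^{-\frac12\sum_j a_j x_j^2}\,\diff x$ two ways: the product formula gives $(2\pi)^{k/2}\bigl(\prod_j a_j\bigr)^{-1/2}$, while hyperspherical coordinates plus the radial integral $\int_0^\infty e^{-s^2/2}s^{k-1}\,\diff s = 2^{k/2-1}\Gamma(k/2)$ and the factor $2$ from the $\pi$-periodicity in $\varphi_{k-1}$ give $2^{k/2}\Gamma(k/2)$ times the angular integral; equating indeed yields $\pi^{k/2}\Gamma(k/2)^{-1}\bigl(\prod_j a_j\bigr)^{-1/2}$, and all your intermediate constants check out. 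The paper instead computes the \emph{volume of the ellipsoid} $\{a_1z_1^2+\cdots+a_kz_k^2\le 1\}$ two ways: once as $\pi^{k/2}\Gamma(k/2+1)^{-1}\bigl(\prod_j a_j\bigr)^{-1/2}$ by scaling the unit ball, and once in hyperspherical coordinates, where $r_{\textup{max}}(\vec\varphi)=f(\vec\varphi)^{-1/2}$ and the radial integral produces $\frac1k f(\vec\varphi)^{-k/2}$. Both are two-line ``compute a known Cartesian quantity in the same hyperspherical coordinates'' arguments of comparable length and elementarity; yours replaces the indicator of the ellipsoid by a Gaussian weight, which meshes naturally with the Gaussian tail computations in Lemmas \ref{lem:rankk_allpos}--\ref{lem:rankk_allneg}, whereas the paper's version needs only the volume formula for the ball and no improper radial integral. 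Either proof is acceptable as written.
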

\begin{proof}
Consider the axis-aligned ellipsoid $E = \{(z_1, \ldots, z_k) \in \R^k : a_1z_1^2 + \cdots + a_kz_k^2 \leq 1\}$. On the one hand, we know that its volume is $\frac{1}{\sqrt{\prod_{j=1}^k a_j}}$ times the volume of the $n$-dimensional unit sphere, i.e., 
\[
	\Vol(E) = \frac{\pi^{k/2}}{\Gamma(k/2+1)} \frac{1}{\sqrt{\prod_{j=1}^k a_j}} = \frac{2}{k} \frac{\pi^{k/2}}{\Gamma(k/2)} \frac{1}{\sqrt{\prod_{j=1}^k a_j}}.
\]
On the other hand, we can find the volume of $E$ in hyperspherical coordinates $(r,\varphi_1, \ldots, \varphi_{n-1})$; it is described by $\{(r,\vec{\varphi}) : 0 \leq r \leq r_{\textup{max}}(\vec{\varphi})\}$ for some function $r_{\textup{max}}(\vec{\varphi})$ which we now compute. Since one changes variables back as $z_1 = r\cos(\varphi_1), z_2 = r\sin(\varphi_1)\cos(\varphi_2), \ldots, z_k = r\sin(\varphi_1) \cdots \sin(\varphi_k)$, we have
\[
	1 = a_1r_{\textup{max}}(\vec{\varphi})^2\cos^2(\varphi_1) + a_2r_{\textup{max}}(\vec{\varphi})^2 \sin^2(\varphi_1)\cos^2(\varphi_2) + \cdots + a_kr_{\textup{max}}(\vec{\varphi})^2 \sin^2(\varphi_1) \cdots \sin^2(\varphi_k) = r_{\textup{max}}(\vec{\varphi})^2 f(\vec{\varphi}),
\]
so that 
\begin{align*}
	\Vol(E) &= \int_{S_\varphi} \int_0^{r_{\textup{max}}(\vec{\varphi})} r^{k-1} \sin^{k-2}(\varphi_1) \sin^{k-3}(\varphi_2) \cdots \sin(\varphi_{k-2}) \diff r \diff \vec{\varphi} \\
	&= \frac{2}{k} \int_{[0,\pi]^{k-1}} f(\vec{\varphi})^{-\frac{k}{2}} \sin^{k-2}(\varphi_1) \sin^{k-3}(\varphi_2) \cdots \sin(\varphi_{k-2}) \diff \vec{\varphi}
\end{align*}
and now we match terms.
\end{proof}

\begin{proof}[Proof of Theorem \ref{thm:main_fixed}]
This follows immediately from Theorem \ref{thm:main} for the Gumbel limits and Lemma \ref{lem:weibull} for the Weibull limits, along with corresponding Gaussian computations, namely Lemmas \ref{lem:rankk_allpos} and \ref{lem:rankk_somepossomeneg} (which give \eqref{eqn:main_apos_noabs} and \eqref{eqn:main_apos_abs}), and Lemma \ref{lem:rankk_allneg} (which gives \eqref{eqn:main_aneg}).
\end{proof}

%%%%%%%%%%%%%%%%%%%%%%%%%%%%%%%%%%%%%%%%%%%%%%%%%%%%%%%%%%%%
%%%%%%%%%%%%%%%%%%%%%%%%%%%%%%%%%%%%%%%%%%%%%%%%%%%%%%%%%%%%
%%%%%%%%
%%%%%%%%             Section: Gaussian computations: diverging rank
%%%%%%%%
%%%%%%%%%%%%%%%%%%%%%%%%%%%%%%%%%%%%%%%%%%%%%%%%%%%%%%%%%%%%
%%%%%%%%%%%%%%%%%%%%%%%%%%%%%%%%%%%%%%%%%%%%%%%%%%%%%%%%%%%%

\section{Gaussian Computations: Diverging Rank}
\label{sec:gaussian_diverging}

The goal of this section is to make Gaussian computations in a representative case of diverging rank.

\begin{lem}
\label{lem:gauss_diverging}
Fix $0 < \alpha < 1$, and let $(k_N)_{N=1}^\infty$ be any integer sequence such that, for some $\epsilon > 0$, 
\begin{equation}
\label{eqn:close_to_nalpha}
	\abs{k_N - N^\alpha} \leq N^{\frac{\alpha}{2}-\epsilon}.
\end{equation}
If we define
\[
	A_N = \diag(1,\ldots,1,0,\ldots,0), \quad \rank(A_N) = k_N,
\]
then (recalling that $(\mathbf{y}_i)_{i=1}^N$ are i.i.d. standard Gaussian vectors)
\begin{equation}
\label{eqn:gauss_diverging}
	\left(\frac{\sqrt{\log N}}{N^{\alpha/2}}\right) \max_{i=1}^N \ip{\mathbf{y}_i, A_N \mathbf{y}_i} - N^{\alpha/2}\sqrt{\log N} - 2\log N + \frac{\log \log N}{2} + \frac{\log (4\pi)}{2} \overset{N \to \infty}{\to} \Lambda \quad \text{in distribution.}
\end{equation}
\end{lem}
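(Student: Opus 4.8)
The plan is to treat this as a classical extreme value problem for the i.i.d.\ sequence $\ip{\mathbf{y}_i, A_N \mathbf{y}_i} = \sum_{j=1}^{k_N} y_{ij}^2$, $i=1,\dots,N$, each summand being a sum of $k_N$ i.i.d.\ squared standard Gaussians, hence a $\chi^2_{k_N}$ variable. Writing $M_N := \max_{i=1}^N \ip{\mathbf{y}_i, A_N\mathbf{y}_i}$ we have $\P(M_N \le t) = (1 - \P(\chi^2_{k_N} \ge t))^N$, so it will suffice to read the threshold off the claimed normalization, namely
\[
	t_N(x) = N^\alpha + 2 N^{\alpha/2}\sqrt{\log N} + \frac{N^{\alpha/2}}{\sqrt{\log N}}\Bigl(x - \tfrac12\log\log N - \tfrac12\log(4\pi)\Bigr),
\]
and to prove $N\,\P(\chi^2_{k_N} \ge t_N(x)) \to e^{-x}$ for each fixed $x$; then $\bigl(1 - \tfrac{e^{-x}(1+o(1))}{N}\bigr)^N \to \exp(-e^{-x}) = \Lambda(x)$, which is the claim. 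Note that $s := t_N(x) - k_N$ is of order $\sqrt{k_N\log N}$, so, since $k_N \to \infty$ polynomially and $\abs{k_N - N^\alpha} \le N^{\alpha/2-\epsilon}$, we are in the moderate-deviation regime $\sqrt{k_N} \ll s \ll k_N$, which is outside the reach of the central limit theorem but accessible by a direct tail computation.

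The heart of the matter is a sufficiently precise estimate for the upper tail of $\chi^2_k$ in this regime. Starting from $\P(\chi^2_k \ge t) = \frac{1}{2^{k/2}\Gamma(k/2)}\int_t^\infty s^{k/2-1}e^{-s/2}\,ds$ and substituting $s = t+r$, one gets
\[
	\P(\chi^2_k \ge t) = \frac{t^{k/2-1}e^{-t/2}}{2^{k/2}\Gamma(k/2)} \int_0^\infty \Bigl(1+\tfrac{r}{t}\Bigr)^{k/2-1} e^{-r/2}\,dr.
\]
Here the integrand concentrates at scale $r \asymp 2t/(t-k+2) \asymp \sqrt{k/\log N}$, and on that scale $\tfrac{k}{2}(r/t)^2 \asymp 1/\log N \to 0$, so the correction between $(1+r/t)^{k/2-1}$ and $e^{(k/2-1)r/t}$ is uniformly negligible and a Laplace (Watson's-lemma) estimate at the endpoint $r=0$ gives $\int_0^\infty (1+r/t)^{k/2-1}e^{-r/2}\,dr = \tfrac{2t}{t-k+2}(1+o(1))$. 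The crucial structural point is that this error is $o(1)$ precisely because $k = k_N$ grows polynomially in $N$ while $\log N$ grows only logarithmically; the same phenomenon makes all subleading corrections vanish below. Combining this with Stirling, $\log\Gamma(k/2) = \tfrac k2\log\tfrac k2 - \tfrac k2 + \tfrac12\log\tfrac{4\pi}{k} + O(1/k)$, and using $\tfrac k2\log\tfrac tk - \tfrac t2 + \tfrac k2 = -\tfrac{s^2}{4k} + O(s^3/k^2)$, one arrives at
\[
	\log\bigl[N\,\P(\chi^2_k\ge t)\bigr] = \log N - \frac{s^2}{4k} + \log 2 + \tfrac12\log k - \log(s+2) - \tfrac12\log(4\pi) + O(s^3/k^2) + o(1).
\]

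It then remains to substitute $t = t_N(x)$, expand, and check the cancellation. Using $\abs{k_N - N^\alpha} \le N^{\alpha/2-\epsilon}$ one may freely replace $N^\alpha$ by $k_N$ in every leading term, the discrepancy contributing only $o(1)$; this gives $s = 2\sqrt{k_N\log N} + \tfrac{\sqrt{k_N}}{\sqrt{\log N}}(x - \tfrac12\log\log N - \tfrac12\log(4\pi)) + o(1)$, hence $\tfrac{s^2}{4k_N} = \log N + x - \tfrac12\log\log N - \tfrac12\log(4\pi) + o(1)$ and $\log(s+2) = \log 2 + \tfrac12\log k_N + \tfrac12\log\log N + o(1)$, while the cubic and higher corrections (from the Laplace step and from expanding $\log(t/k)$) are $O((\log N)^{3/2}/\sqrt{k_N}) = o(1)$. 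Plugging these in, the terms $\log 2$, $\tfrac12\log k_N$, $\tfrac12\log\log N$, and $\tfrac12\log(4\pi)$ all cancel and $\log[N\,\P(\chi^2_{k_N}\ge t_N(x))] \to -x$, which finishes the proof.

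The main obstacle is the second step: producing the moderate-deviation asymptotics for $\chi^2_{k_N}$ with error terms controlled tightly enough that, after multiplication by $N$ and taking logarithms, everything beyond the explicit $\log N$, $\log\log N$, and constant terms is genuinely $o(1)$. This requires careful bookkeeping of the interplay of the three scales $k_N \asymp N^\alpha$, $s \asymp \sqrt{k_N\log N}$, and $\log N$. The elementary Laplace argument above suffices; alternatively one could quote Temme's uniform asymptotic expansion of the incomplete gamma function, whose error term is of relative size $O(\sqrt{\log N}\,/N^{\alpha/2})$ in this regime and hence again negligible.
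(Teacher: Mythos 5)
Your proposal is correct, and it starts from the same reduction as the paper: both arguments come down to showing $N\,\P\big(\chi^2_{k_N} \ge t_N(x)\big) \to e^{-x}$ for the threshold $t_N(x)=p_Nx+q_N$, after which $(1-e^{-x}(1+\oo(1))/N)^N\to\exp(-e^{-x})$. Where you differ is in how the moderate-deviation tail of $\chi^2_{k_N}$ is obtained. The paper quotes Wimp's uniform asymptotic (via Boyd--Goh) for the incomplete gamma ratio, $\Gamma(n,nt)/\Gamma(n)=\tfrac{1}{\sqrt2}\tfrac{\mu(t)}{t-1}\erfc(\sqrt n\,\mu(t))(1+\OO(n^{-1/2}))$ with $\mu(t)=\sqrt{t-\log t-1}$, and then finishes with the standard $\erfc$ asymptotics and a cubic Taylor expansion of $\mu$ near $t=1$. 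You instead compute the tail by hand: the endpoint Laplace estimate $\int_0^\infty(1+r/t)^{k/2-1}e^{-r/2}\,\rd r=\tfrac{2t}{t-k+2}(1+\OO(1/\log N))$ together with Stirling, and your bookkeeping of the three scales is sound — the Laplace step has relative error $\OO(1/\log N)$, the cubic term is $\OO\big((\log N)^{3/2}k_N^{-1/2}\big)=\oo(1)$, and the cancellation of $\log 2$, $\tfrac12\log k_N$, $\tfrac12\log\log N$, $\tfrac12\log(4\pi)$ leaves exactly $-x$. Your route is more self-contained and elementary (and, like the paper's, works for all $0<\alpha<1$); the paper's route outsources the delicate uniform asymptotics to a citable expansion at the cost of introducing $\mu$ and the $\erfc$ comparison.

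One small imprecision: after invoking $\abs{k_N-N^\alpha}\le N^{\alpha/2-\epsilon}$ you assert $s=2\sqrt{k_N\log N}+\tfrac{\sqrt{k_N}}{\sqrt{\log N}}\big(x-\tfrac12\log\log N-\tfrac12\log(4\pi)\big)+\oo(1)$; the additive error from replacing $N^{\alpha}$ by $k_N$ is in fact $\OO(N^{\alpha/2-\epsilon})$, which need not be $\oo(1)$. This does not damage the proof, because what you actually use is the effect of this shift on $s^2/(4k_N)$ and on $\log(s+2)$, and there it contributes $\OO\big(\tfrac{s}{k_N}N^{\alpha/2-\epsilon}\big)=\OO(N^{-\epsilon}\sqrt{\log N})=\oo(1)$ and $\oo(1)$ respectively — precisely the displayed estimates you state next, which are correct. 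Simply record the error in $s$ as $\OO(N^{\alpha/2-\epsilon})$ and note that it is negligible at the scale $k_N/s\asymp\sqrt{k_N/\log N}$ relevant for $s^2/(4k_N)$.
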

\begin{proof}
We will need the lower and upper incomplete gamma functions, respectively $\gamma(\cdot,\cdot)$ and $\Gamma(\cdot,\cdot)$. They can be defined for complex arguments, but for us it suffices to consider nonnegative real arguments and the definitions
\[
	\gamma(s,x) = \int_0^x t^{s-1}e^{-t}\diff t, \qquad \Gamma(s,x) = \int_x^\infty t^{s-1}e^{-t}\diff t.
\]
With the positive sequences $(p_N)_{N=1}^\infty, (q_N)_{N=1}^\infty$ defined by
\begin{align}
	p_N &= p_{N,\alpha} \defeq \frac{N^{\alpha/2}}{\sqrt{\log N}}, \label{eqn:cn} \\
	q_N &= q_{N,\alpha} \defeq N^\alpha + 2N^{\alpha/2}\left(\sqrt{\log N} - \frac{\log(4\pi\log N)}{4\sqrt{\log N}} \right), \label{eqn:dn}
\end{align}
notice that \eqref{eqn:gauss_diverging} is equivalent to
\[
	\frac{\max_{i=1}^N \ip{\mathbf{y}_i,A_N\mathbf{y}_i} - q_N}{p_N} \overset{N \to \infty}{\to} \Lambda \quad \text{in distribution},
\]
i.e., to showing that for every real $x$ we have
\begin{align*}
	\exp(-e^{-x}) &= \lim_{N \to \infty} \P\left(\max_{i=1}^N \ip{\mathbf{y}_i,A_N\mathbf{y}_i} \leq p_Nx + q_N\right) = \lim_{N \to \infty} \P(\ip{\mathbf{y}_1,A_N\mathbf{y}_1} \leq p_Nx+q_N)^N \\
	&= \lim_{N \to \infty} \left( 1 - \frac{\Gamma\left(\frac{k_N}{2}, \frac{p_Nx+q_N}{2}\right)}{\Gamma\left(\frac{k_N}{2}\right)} \right)^N,
\end{align*}
where the last equality is an elementary calculation. Writing
\begin{align*}
	n = n_N &\defeq \frac{k_N}{2}, \\
	a_N(x) = a_{N,\alpha}(x) &\defeq \frac{\Gamma\left(n, \frac{p_Nx+q_N}{2}\right)}{\Gamma\left(n\right)}
\end{align*}
and Taylor-expanding $\log(1-x) \approx -x$, we find that it suffices to show
\begin{equation}
\label{eqn:an_limit}
	\lim_{N \to \infty} Na_N(x) = e^{-x}
\end{equation}
for arbitrary $x$, an analysis exercise that takes up the remainder of the proof.

We will use the following estimate, due to Wimp, published in \cite[Proposition 3]{BoyGoh2007}: Uniformly for $t \geq 1$, with 
\begin{align*}
	\mf{e}_n(x) &\defeq \sum_{m=0}^n \frac{x^n}{n!}, \\
	\mu(t) &\defeq \sqrt{t-\log(t)-1},
\end{align*}
we have
\begin{equation}
\label{eqn:wimp}
	\frac{\mf{e}_n(nt)}{e^{nt}} = \frac{1}{\sqrt{2}} \frac{\mu(t)t}{t-1} \erfc(\sqrt{n}\mu(t))\left(1+\OO_{n \to +\infty}\left(\frac{1}{\sqrt{n}}\right)\right).
\end{equation}
(Notice that $\lim_{t \downarrow 1} \frac{\mu(t)}{t-1} = \frac{1}{\sqrt{2}}$, so there is no singularity at $t = 1$.) 

On the other hand, one can show that
\[
	\frac{t\mf{e}_{n-1}(nt)}{\mf{e}_n(nt)} = 1+\OO_{n \to +\infty} \left(\frac{1}{\sqrt{n}}\right), \quad \text{uniformly in $t \geq 1$}.
\]
Indeed, the left-hand side is, for each $n$, a non-decreasing function of $t \geq 1$, bounded above by one, whose value at $t = 1$ is $1+\OO(1/\sqrt{n})$. Since $\frac{\Gamma(n,nt)}{\Gamma(n)} = e^{-nt}\mf{e}_{n-1}(nt)$, these two estimates give
\begin{equation}
\label{eqn:wimp_for_us}
	\frac{\Gamma(n,nt)}{\Gamma(n)} = \frac{1}{\sqrt{2}} \frac{\mu(t)}{t-1} \erfc(\sqrt{n}\mu(t))\left(1+\OO_{n \to +\infty}\left(\frac{1}{\sqrt{n}}\right)\right), \quad \text{uniformly in $t \geq 1$}.
\end{equation}
We use this with
\begin{align*}
	t &= t_n(x) = t_N(x) \defeq \frac{p_Nx+q_N}{k_N} = \left(\frac{N^\alpha}{k_N}\right) \left(1+2N^{-\alpha/2}\left(\sqrt{\log N} - \frac{\log(4\pi e^{-2x} \log N)}{4\sqrt{\log N}}\right)\right).
\end{align*}
As above, we find $\lim_{n \to \infty} \frac{\mu(t_n(x))}{t_n(x)-1} = \frac{1}{\sqrt{2}}$. 
Thus to show \eqref{eqn:an_limit} it suffices to show
\begin{align}
	\lim_{N \to \infty} \frac{N\erfc(\sqrt{n}\left(\frac{t_n(x)-1}{\sqrt{2}}\right))}{2} &= e^{-x}, \label{eqn:an_limit_main} \\
	\lim_{N \to \infty} \frac{\erfc(\sqrt{n}\mu(t_n(x)))}{\erfc(\sqrt{n}\left(\frac{t_n(x)-1}{\sqrt{2}}\right))} &= 1. \label{eqn:an_limit_taylor}
\end{align}
To show these, we first observe the following elementary-calculus consequence of \eqref{eqn:close_to_nalpha}. For some $c, C > 0$, and for $N$ large enough depending on $x$, we have
\begin{equation}
\label{eqn:tn-1}
	cN^{-\alpha/2}\sqrt{\log N} \leq t_n(x)-1 \leq CN^{-\alpha/2}\sqrt{\log N}.
\end{equation}
This implies both $\lim_{N \to \infty} \sqrt{n}(t_n(x)-1) = +\infty$, thus by Taylor expansion $\lim_{N \to \infty} \sqrt{n}\mu(t_n(x))=+\infty$, as well as $\lim_{N \to \infty} n(t_n(x)-1)^3 = 0$. Applying these with the asymptotics $\lim_{x \to +\infty} \erfc(x) \exp(x^2)x\sqrt{\pi} = 1$ and the Taylor expansion 
\[
	\lim_{s \downarrow 1} \frac{\frac{(s-1)^2}{2}-\mu(s)^2}{\frac{(s-1)^3}{3}} = 1,
\]
we can check \eqref{eqn:an_limit_taylor} as
\[
	\lim_{N \to \infty} \frac{\erfc(\sqrt{n}\mu(t_n(x)))}{\erfc(\sqrt{n}\left(\frac{t_n(x)-1}{\sqrt{2}}\right))} = \lim_{N \to \infty} \exp\left[ n\left(\frac{(t_n(x)-1)^2}{2}-\mu(t_n(x))^2\right) \right] \frac{\frac{t_n(x)-1}{\sqrt{2}}}{\mu(t_n(x))} = 1.
\]
To check \eqref{eqn:an_limit_main}, we will need the quantities (recall that $k_N = 2n$)
\begin{align*}
	r_n(x) &\defeq \sqrt{\frac{\frac{N^\alpha}{2}}{2}} \left(2N^{-\alpha/2}\left(\sqrt{\log N}-\frac{\log(4\pi e^{-2x}\log N)}{4\sqrt{\log N}}\right) \right) = \sqrt{\log N} - \frac{\log(4\pi e^{-2x} \log N)}{4\sqrt{\log N}}, \\
	s_n(x) &\defeq \sqrt{\frac{n}{2}} (t_n(x)-1) = \sqrt{\frac{n}{2}} \left(\frac{N^\alpha}{k_N}(1+2N^{-\alpha/2}r_n(x)) - 1\right) = r_n(x) + \OO(N^{-\epsilon}),
\end{align*}
where the last estimate follows from \eqref{eqn:close_to_nalpha}. These therefore satisfy
\[
	\lim_{N \to \infty} (r_n(x)^2-s_n(x)^2) = 0,
\]
so we can verify \eqref{eqn:an_limit_main} as
\[
	\lim_{N \to \infty} \frac{N}{2} \erfc\left[ \sqrt{n}\left(\frac{t_n(x)-1}{\sqrt{2}}\right)\right] = \lim_{N \to \infty} \frac{N}{2\sqrt{\pi}} \frac{\exp(- s_n(x)^2)}{s_n(x)} = \lim_{N \to \infty} \frac{N}{2\sqrt{\pi}} \frac{\exp(-r_n(x)^2)}{r_n(x)} = e^{-x},
\]
where the last computation is elementary. This finishes the proof of \eqref{eqn:an_limit} and thus Lemma \ref{lem:gauss_diverging}.
\end{proof}

\begin{proof}[Proof of Theorem \ref{thm:main_diverging}]
This follows immediately from Theorem \ref{thm:main} and Lemma \ref{lem:gauss_diverging}, which give \eqref{eqn:main_diverging}.
\end{proof}

\addcontentsline{toc}{section}{References}
\bibliographystyle{alpha-abbrvsort}
\bibliography{gumbelbib}

\end{document}